\def\dpr#1{{#1}}
\def\R{{\mathbb R}}
\def\N{{\mathbb N}}
\def\AA{{\mathcal A}}
\def\EE{{\mathcal E}}
\def\HH{{\mathcal H}}
\def\JJ{{\mathcal J}}
\def\MM{{\mathcal M}}
\def\OO{{\mathcal O}}
\def\SS{{\mathcal S}}
\def\TT{{\mathcal T}}
\def\LL{{\mathcal L}}
\def\PP{{\mathcal P}}
\def\NN{{\mathcal N}}
\def\hone{H^1(\Omega)}
\def\KK{\mathcal K}
\def\KKell{\KK_\ell^\star}
\def\edual#1#2{\langle\hspace*{-1mm}\langle#1\,,\,#2\rangle\hspace*{-1mm}\rangle}
\def\diam{{\rm diam}}
\def\norm#1#2{\|#1\|_{#2}}
\def\enorm#1{|\hspace*{-.5mm}|\hspace*{-.5mm}|#1|\hspace*{-.5mm}|\hspace*{-.5mm}|}
\def\set#1#2{\big\{#1\,:\,#2\big\}}
\def\eps{\varepsilon}
\def\dual#1#2{\langle#1\,,\,#2\rangle}
\def\oscl1#1#2{\operatorname{osc_{\ell+1}}(#1|#2)^2}
\def\newset#1#2{\{#1 \hspace{1mm}:\hspace{1mm} #2\}}
\newcounter{constantsnumber}
\def\setc#1{\refstepcounter{constantsnumber}%
\label{const#1}C_{\theconstantsnumber}}
\def\c#1{C_{\ref{const#1}}}
\def\osc{{\rm osc}}
\def\conv{{\rm conv}}
\def\apx{{\rm apx}}
\newtheorem{theorem}{Theorem}
\newtheorem{proposition}[theorem]{Proposition}
\newtheorem{lemma}[theorem]{Lemma}
\newtheorem{algorithm}[theorem]{Algorithm}
\newenvironment{remark}{\paragraph{Remark.}\ \it}{\qed\bigskip}
\newcounter{step}
\renewenvironment{proof}[1][]%
{\setcounter{step}{0}\medskip \ifthenelse{\equal{#1}{}}{\emph{Proof.~}}{\emph{#1.~}}}%
{\qed\bigskip}
\newcommand{\myfootnote}[1]{%
\renewcommand{\thefootnote}{}%
\footnotetext{#1}%
\renewcommand{\thefootnote}{\arabic{footnote}}%
}
\def\subsection#1
\bf\arabic{section}.\arabic{subsection}.~#1.~}
\begin{document}

\title[Convergence of AFEM for obstacle problem]{Convergence of Adaptive FEM\\ for Some Elliptic Obstacle Problem\\ with Inhomogeneous Dirichlet Data}
\date{}

\author{M. Feischl}

\author{M. Page}

\author{D. Praetorius}
\address{Institute for Analysis and Scientific Computing,
     Vienna University of Technology,
     Wiedner Hauptstra\ss{}e 8-10,
     A-1040 Wien, Austria}
\email{\{\,Michael.Feischl\,,\,Dirk.Praetorius\,\}@tuwien.ac.at}
\email{Marcus.Page@tuwien.ac.at\quad\rm(corresponding author)}


\maketitle

\begin{abstract}
In this work, we show the convergence of adaptive lowest-order FEM (AFEM) for an elliptic obstacle problem with non-homogeneous Dirichlet data,
where the obstacle $\chi$ is restricted only by $\chi \in H^2(\Omega)$.
The adaptive loop is steered by some residual based error estimator
introduced in \textsc{Braess, Carstensen \& Hoppe} (2007) that is extended to control oscillations of the Dirichlet data, as well. 
In the spirit
of \textsc{Cascon et al}.\ (2008), we show that a weighted sum of energy error, estimator, and Dirichlet oscillations satisfies a 
contraction
property up to certain vanishing energy contributions. This result extends the analysis of \textsc{Braess, Carstensen \& Hoppe} (2007) and
\textsc{Page \& Praetorius} (2010) to the
case of non-homogeneous Dirichlet data as well as certain non-affine obstacles and introduces some energy estimates to overcome the lack of nestedness of the
discrete spaces.
\end{abstract}


\section{Introduction}
\subsection{Comments on prior work}
\myfootnote{\textit{Key words and phrases.} Adaptive finite element methods, Elliptic obstacle problems, Convergence analysis.}
\myfootnote{2000 \textit{Mathematics Subject Classification.} 65N30, 65N50.}
Adaptive finite element methods based on various types of a posteriori error estimators are a famous tool in science and engineering 
and are used to deal with a wide range of problems. As far as elliptic boundary value problems are concerned, convergence and even 
quasi-optimality of the adaptive scheme is well understood and analyzed, see e.g.\ 
\cite{ao, ckns, d1996, mns, msv, stevenson, verfuerth}.

In recent years the analysis has been extended and adapted to cover more general applications, such as the $p$-Laplacian \cite{veeser},
mixed methods~\cite{ch06:1}, non-conforming elements~\cite{ch06:2}, and obstacle problems. The latter is a classic introductory example
to study variational inequalities which represent a whole class of problems that often arise in physical \dpr{and economical context. One
major application is the oscillation of a membrane that must stay above a certain obstacle. Other examples are
filtration in porous media or the Stefan problem (i.e.\ melting solids). In both of which, non-homogeneous Dirichlet data play an important role.
Also in the financial world, obstacle problems arise, e.g.\ in the valuation of the American put option \cite{ps}, where one
has to deal with various non-affine obstacles.} For a broader understanding of these problems, we refer to~\cite{fr1982} and the references therein.
\dpr{The great applicability in many scientific areas thus make numerical analysis and mathematical understanding of the obstacle problem both, interesting and important.}
As far as a posteriori error analysis is concerned, we refer to~\cite{aol, b2005, bc, cn, ly, nsv, v2001}. Convergence of an 
adaptive method for elliptic obstacle problems with globally affine obstacle was proven in~\cite{cbh2007, pp2010}. Both of these 
works, however, considered homogeneous Dirichlet boundary data and affine obstacles only.
\subsection{Contributions of current work}
\dpr{We treat the case of a general obstacle $\chi \in H^2(\Omega)$. By a simple transformation and allowing non-homogeneous Dirichlet data (Prop.~\ref{prop:zero_obstacle}), this can, however,
be reduced to the case of a constant zero-obstacle. Since our analysis works for general globally affine obstacles, even without
the reduction step, we consider affine obstacles and non-homogeneous Dirichlet data in the following.} We follow the ideas from~\cite{pp2010}, i.e.\ adaptive P1-FEM for some elliptic obstacle problem with globally affine obstacle. Contrary to~\cite{pp2010} and~\cite{cbh2007}, however, we allow non-homogeneous Dirichlet boundary data $g\in H^1(\Gamma)$, which are approximated by $g_\ell$ via nodal interpolation within each step of the adaptive loop. In contrast to the aforementioned works, we thus do not have nestedness of the discrete ansatz sets, which is a crucial ingredient of the prior convergence proofs. In the spirit of~\cite{ckns} and in analogy to~\cite{pp2010}, we show that our adaptive algorithm, steered by some estimator $\varrho_\ell$, guarantees that the combined error quantity
\begin{align}
\Delta_\ell := \JJ(U_\ell) - \JJ(u_\ell) + \gamma\,\varrho_\ell^2 + \lambda\,\apx_\ell^2
\end{align}
is a contraction up to some vanishing perturbations $\alpha_\ell \rightarrow 0$, i.e.\
\begin{align}
\Delta_{\ell+1} \le \, \kappa \Delta_\ell + \alpha_\ell,
\end{align}
with $0 < \gamma, \kappa < 1, \lambda > 0$, and $\alpha_\ell \ge 0$.
The data oscillations on the Dirichlet boundary are controlled by the term $\apx_\ell$, and the quantity $u_\ell$ denotes the continuous solution subject to discrete boundary data $g_\ell$, which is introduced to circumvent the lack of nestedness of the discrete spaces. Convergence then follows from a weak
reliability estimate of $\varrho_\ell$, namely
\begin{align}
\varrho_\ell \rightarrow 0
\quad \Rightarrow \quad
\norm{u - U_\ell}{H^1(\Omega)} \rightarrow 0,
\end{align}
since $\varrho_\ell\lesssim\Delta_\ell\to0$ as $\ell\to\infty$.
We point out that our convergence proof makes use of the so called \emph{estimator reduction} and does therefore not need the \emph{interior node property}, which makes the result fairly independent of the local mesh-refinement strategy.

\subsection{Outline of current work}
In Section~\ref{section:continuous}, we formulate the continuous model problem and recall its unique solvability. In 
Section~\ref{section:discrete}, the same is done for the discretized problem. Section~\ref{section:algorithm} is a collection of the 
main results of this paper. Here, we introduce the error estimator $\varrho_\ell$, which is a generalization of the corresponding 
estimators from~\cite{cbh2007, pp2010}. We then state its weak reliability (Theorem \ref{thm:reliability}) and our version of the 
adaptive algorithm (Algorithm \ref{algorithm}). Finally (Theorem \ref{thm:convergence}), we state that the sequence of discrete 
solutions indeed converges towards the continuous solution $u \in H^1(\Omega)$. The subsequent 
Sections~\ref{section:reliability}--\ref{section:numerics} are then devoted to the proofs of the aforementioned results and numerical 
illustrations.

\section{Model Problem}
\label{section:continuous}%

\subsection{Problem formulation}
\label{section:continuous:problem}%
We consider an elliptic obstacle problem in $\R^2$ on a bounded domain
$\Omega$ with polygonal boundary $\Gamma:=\partial \Omega$. An obstacle on
$\overline\Omega$ is defined by the smooth function $\chi\in H^2(\Omega)$. Moreover, we consider inhomogeneous Dirichlet data $g \in H^{1/2}(\Gamma)$ and thus additionally require $\chi \le g$ almost everywhere on $\Gamma$. By
\begin{align}\label{eq:A}
 \AA := \newset{v \in \hone}{v\ge \chi \text{ a.e.\ in } \Omega, \, v|_\Gamma = g\text{ a.e.\ on }\Gamma},
\end{align}
we denote the set of admissible functions.
For given $f \in L^2(\Omega)$, we consider the energy functional
\begin{align}\label{eq:ourfunctional}
 \JJ(v) = \frac12 \edual{v}{v} - (f,v),
\end{align}
with the bilinear form
\begin{align}\label{eq:edual}
\edual{u}{v} = \int_\Omega\nabla u\cdot\nabla v\,dx
\quad\text{for all }u,v\in \hone
\end{align}
and with the $L^2$-scalar product
\begin{align}\label{eq:L2}
(f,v) = \int_\Omega fv\,dx.
\end{align}
By $\enorm\cdot$, we denote the energy norm on $H^1_0(\Omega)$ induced by $\edual\cdot\cdot$.
The obstacle problem then reads as follows: \emph{Find $u\in \AA$ such that}
\begin{align}\label{eq:obstacle}
\JJ(u) = \min_{v\in\AA}\JJ(v).
\end{align}

\subsection{Unique solvability}
\label{section:continuous:solvability}%
For the sake of completeness and to collect the main arguments also needed below, we recall the proof that the obstacle problem~\eqref{eq:obstacle} admits a unique solution. We stress that the following argument holds for any measurable obstacle $\chi$ with meaningful trace $\chi|_\Gamma$. Our restriction to smooth obstacles $\chi\in H^2(\Omega)$ is needed for the equivalent reformulation in Section~\ref{section:continuous:reformulation}.

\begin{proposition}\label{prop:continuous:solvability}
For given data $(\chi,g,f)\in H^2(\Omega)\times H^{1/2}(\Gamma)\times L^2(\Omega)$, the obstacle problem~\eqref{eq:obstacle} admits a unique solution $u\in\AA$
which is
equivalently characterized by the variational inequality
\begin{align}\label{eq:var_in}
\edual{u}{v-u} \ge (f,v-u)
\end{align}
for all $v \in \AA$.\qed
\end{proposition}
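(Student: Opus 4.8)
The plan is to follow the classical Stampacchia-type argument for variational inequalities over a closed convex set. First I would establish that the admissible set $\AA$ is nonempty, closed, and convex in $\hone$: nonemptiness follows since $\chi\le g$ on $\Gamma$ with $\chi\in H^2(\Omega)\subset\hone$ (so $\chi$ itself, or a suitable lifting of $g$ lying above $\chi$, is admissible); convexity is immediate because both constraints $v\ge\chi$ a.e.\ and $v|_\Gamma=g$ are affine/convex; and closedness follows because $H^1$-convergence implies (a subsequence converging) pointwise a.e.\ and the trace operator is continuous, so both constraints pass to the limit.

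Next I would recast the minimization. Writing $\JJ(v)=\tfrac12\edual{v}{v}-(f,v)$, the bilinear form $\edual\cdot\cdot$ is symmetric, continuous, and coercive on the relevant space (coercivity on $H^1_0(\Omega)$ via Poincar\'e–Friedrichs, noting that for $v,w\in\AA$ the difference $v-w\in H^1_0(\Omega)$, so the functional restricted to $\AA$ is strictly convex along the affine subspace through any admissible point). Hence $\JJ$ is strictly convex, continuous, and coercive on $\AA$ (it is bounded below and its sublevel sets are bounded, using Young's inequality to absorb $(f,v)$). Then existence of a minimizer follows from the direct method — take a minimizing sequence, extract a weakly convergent subsequence by boundedness, use weak lower semicontinuity of $\JJ$ (convex plus continuous) and weak closedness of the convex closed set $\AA$ — and uniqueness follows from strict convexity.

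For the equivalent characterization by the variational inequality~\eqref{eq:var_in}, I would use the standard first-order optimality condition: if $u$ minimizes $\JJ$ over the convex set $\AA$, then for any $v\in\AA$ and $t\in(0,1]$ the point $u+t(v-u)=(1-t)u+tv$ lies in $\AA$, so $t\mapsto\JJ(u+t(v-u))$ has a minimum at $t=0$; computing $\tfrac{d}{dt}\big|_{t=0^+}\JJ(u+t(v-u))=\edual{u}{v-u}-(f,v-u)\ge0$ gives~\eqref{eq:var_in}. Conversely, if $u\in\AA$ satisfies~\eqref{eq:var_in}, then for any $v\in\AA$ convexity of $\JJ$ yields $\JJ(v)\ge\JJ(u)+\edual{u}{v-u}-(f,v-u)\ge\JJ(u)$ (or one expands $\JJ(v)-\JJ(u)=\edual{u}{v-u}-(f,v-u)+\tfrac12\enorm{v-u}^2$ directly), so $u$ is the minimizer.

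The main obstacle — though it is more a point requiring care than a genuine difficulty — is the coercivity/boundedness-below step: since $\edual\cdot\cdot$ only controls the $H^1$-seminorm and not the full $H^1$-norm, one must exploit that differences of admissible functions vanish on $\Gamma$ and invoke a Friedrichs inequality relative to a fixed admissible reference function (e.g.\ an $H^1$-lifting of the Dirichlet datum $g$ that also respects $v\ge\chi$). Everything else is routine convex analysis, and the argument indeed never uses $\chi\in H^2(\Omega)$ beyond ensuring $\chi\in\hone$ so that $\AA\neq\emptyset$, consistent with the remark preceding the statement.
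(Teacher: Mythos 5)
Your argument is correct and, at its core, is the same Lions--Stampacchia direct method, but you package it differently from the paper. The paper does not run the direct method on $\AA$ itself: it first shifts the problem to $H^1_0(\Omega)$ via a lifting $\widehat g=\LL g$ (Lemma~\ref{lem:zeroboundary}), obtaining a minimization over the closed convex set $\KK=\{\widetilde v\in H^1_0(\Omega):\widetilde v\ge\chi-\widehat g\}$ with the modified functional $\dual{L}{\widetilde v}=(f,\widetilde v)-\edual{\widehat g}{\widetilde v}$, and then invokes the abstract result of Lemma~\ref{lem:minimization}, where the bilinear form is the scalar product of the Hilbert space and coercivity is automatic. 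You instead work directly on the affine set $\AA\subset H^1(\Omega)$ and compensate for the fact that $\edual\cdot\cdot$ only controls the $H^1$-seminorm by a Friedrichs inequality relative to a fixed admissible reference function; this is precisely the point where the paper's shift to $H^1_0(\Omega)$ does the same job. Both routes are valid; the paper's modular version has the added benefit that the sets $\KK$, $\KK_\ell^\star$ and the functionals $L$, $L_\ell$ introduced by the shift are reused verbatim in the Mosco-convergence stability argument of Proposition~\ref{thm:stability}. One small inaccuracy to fix: $\chi$ itself is in general \emph{not} admissible, since $\chi|_\Gamma\le g$ need not be an equality; nonemptiness of $\AA$ requires the second alternative you mention, e.g.\ $\max\{\chi,\LL g\}\in\AA$, whose trace equals $g$ because $\chi|_\Gamma\le g$ and which dominates $\chi$ --- the same max-construction the paper uses later in the proof of Proposition~\ref{thm:stability}.
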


The following two lemmata provide the essential ingredients to prove Proposition~\ref{prop:continuous:solvability}. We start with a well-known abstract result,
cf.\ e.g.~\cite[Section~2.4--2.6]{b2007}.

\begin{lemma}\label{lem:minimization}
Let $\HH$ be a Hilbert space with scalar product $\edual\cdot\cdot$ and $\KK \subseteq \HH$ be a closed, convex, and
 non-empty subset. Then, for given $L \in \HH^*$, the variational problem
\begin{align}\label{eq:minimization}
 \JJ(u) = \min_{v \in\KK} \JJ(v)
 \quad\text{with}\quad
 \JJ(v) = \frac{1}{2}\edual{v}{v} - \dual{L}{v}
\end{align}
has a unique solution $u \in \KK$, where $\dual{\cdot}{\cdot}$ denotes the dual pairing
between $\HH$ and $\HH^*$. In addition, this solution is equivalently
characterized by the variational inequality
\begin{align}
\edual{u}{v-u} \ge \dual{L}{v-u}
\end{align}
for all $v \in \KK$.
\qed
\end{lemma}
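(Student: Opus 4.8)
The plan is to reduce the lemma to three classical sub-claims — equivalence with the variational inequality, existence of a minimizer, and uniqueness — and to treat them in that order, since the equivalence makes the last two easy to cross-check.

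For the equivalence, I would expand, for fixed $u,v\in\KK$ and $t\in[0,1]$, the scalar function $\varphi(t):=\JJ(u+t(v-u))$. Since $\edual\cdot\cdot$ is bilinear and symmetric, $\varphi$ is the quadratic polynomial $\varphi(t)=\JJ(u)+t\bigl(\edual{u}{v-u}-\dual{L}{v-u}\bigr)+\tfrac{t^2}{2}\enorm{v-u}^2$. If $u$ minimizes $\JJ$ over $\KK$, then convexity of $\KK$ gives $u+t(v-u)\in\KK$, so $\varphi$ is minimal on $[0,1]$ at $t=0$, whence $\varphi'(0)=\edual{u}{v-u}-\dual{L}{v-u}\ge 0$. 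Conversely, if the variational inequality holds, taking $t=1$ gives $\JJ(v)-\JJ(u)=\bigl(\edual{u}{v-u}-\dual{L}{v-u}\bigr)+\tfrac12\enorm{v-u}^2\ge 0$ for all $v\in\KK$, i.e.\ $u$ is a minimizer.

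For existence, I would first observe that $\JJ$ is bounded below: from $\dual{L}{v}\le\norm{L}{\HH^*}\enorm{v}$ and Young's inequality one gets $\JJ(v)\ge\tfrac14\enorm{v}^2-\norm{L}{\HH^*}^2\ge-\norm{L}{\HH^*}^2$, so $m:=\inf_{v\in\KK}\JJ(v)$ is a finite real number (here non-emptiness of $\KK$ enters). Let $(u_n)\subset\KK$ be a minimizing sequence. The crucial step is the parallelogram identity: writing $\enorm{u_n-u_m}^2=2\enorm{u_n}^2+2\enorm{u_m}^2-4\enorm{\tfrac{u_n+u_m}{2}}^2$ and substituting $\enorm{w}^2=2\JJ(w)+2\dual{L}{w}$, the terms containing $L$ cancel and one is left with
\[
\enorm{u_n-u_m}^2 = 4\JJ(u_n)+4\JJ(u_m)-8\,\JJ\Bigl(\tfrac{u_n+u_m}{2}\Bigr).
\]
Since $\tfrac{u_n+u_m}{2}\in\KK$ by convexity, the last term is $\ge 8m$, so $\enorm{u_n-u_m}^2\le 4\JJ(u_n)+4\JJ(u_m)-8m\to 0$ as $n,m\to\infty$. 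Hence $(u_n)$ is Cauchy; completeness of $\HH$ yields a limit $u$, closedness of $\KK$ gives $u\in\KK$, and continuity of $\JJ$ gives $\JJ(u)=m$, so $u$ is a minimizer.

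Uniqueness I would deduce from the same identity applied to two minimizers $u_1,u_2\in\KK$: its right-hand side becomes $4m+4m-8\JJ(\tfrac{u_1+u_2}{2})\le 0$, forcing $u_1=u_2$; equivalently, one can add the variational inequality for $u_1$ tested with $v=u_2$ and that for $u_2$ tested with $v=u_1$. The only genuinely computational point is the cancellation of the $\dual{L}{\cdot}$-terms in the parallelogram step; everything else is bookkeeping, so I do not anticipate a real obstacle.
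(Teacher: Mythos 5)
Your proposal is correct and complete: the first-variation argument for the equivalence with the variational inequality, and the direct method combined with the parallelogram identity for existence and uniqueness, are exactly the classical argument the paper invokes by citing \textsc{Braess} (the paper itself omits the proof as well known). All computations check out, including the cancellation of the $\dual{L}{\cdot}$-terms and the Young-inequality lower bound, so nothing needs to be added.
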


To apply Lemma~\ref{lem:minimization}, we observe that the obstacle problem~\eqref{eq:obstacle} can be shifted into a setting with homogeneous Dirichlet data and $\HH=H^1_0(\Omega)$.
This involves a standard lifting operator $\LL:H^{1/2}(\Gamma)\to H^1(\Omega)$, see e.g.~\cite[Theorem 3.37]{m2000}, with the properties
\begin{align}
 (\LL v)|_\Gamma = v
 \quad\text{and}\quad
 \norm{\LL v}{H^1(\Omega)} \le \c{lifting}\,\norm{v}{H^{1/2}(\Gamma)},
\end{align}
where the constant $\setc{lifting}>0$ depends only on $\Omega$.

With elementary algebraic manipulations,
see e.g.~\cite[Section II.6]{k},
one obtains the following well-known
link between the obstacle problem~\eqref{eq:obstacle} and the abstract
minimization problem~\eqref{eq:minimization} from Lemma~\ref{lem:minimization}.

\begin{lemma}\label{lem:zeroboundary}
Let $\widehat g\in H^1(\Omega)$ be an arbitrary extension of
$g\in H^{1/2}(\Gamma)$, e.g., $\widehat g=\LL g$.
For $u\in\AA$ and $u - \widehat g = \widetilde u \in \KK
:=\newset{\widetilde v \in H^1_0(\Omega)}{\widetilde v \ge \chi - \widehat g}$,
the following statements are equivalent:
\begin{itemize}
\item[(i)] $u$ solves the obstacle problem~\eqref{eq:obstacle}.
\item[(ii)] $u$ satisfies $\edual{u}{v-u} \ge (f, v- u)$ for all $v\in\AA$.
\item[(iii)] $\widetilde u$ satisfies
 $\edual{\widetilde u}{\widetilde v - \widetilde u}
 \ge (f, \widetilde v - \widetilde u)
 - \edual{\widehat g}{\widetilde v - \widetilde u}$
for all $\widetilde v\in \KK$.\qed
\end{itemize}
\end{lemma}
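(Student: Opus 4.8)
The plan is to prove Lemma~\ref{lem:zeroboundary} by establishing the chain (i)~$\Leftrightarrow$~(ii)~$\Leftrightarrow$~(iii), where the first equivalence is a direct invocation of Lemma~\ref{lem:minimization} applied to the obstacle problem and the second is a purely algebraic translation via the affine shift $u = \widetilde u + \widehat g$. First I would check that the set $\AA$ from~\eqref{eq:A} is nonempty, closed, and convex in $\hone$: nonemptiness follows from the compatibility assumption $\chi\le g$ on $\Gamma$ together with $\chi\in H^2(\Omega)$ (so $\chi$ itself, or a suitable modification, is admissible; more carefully one uses that $g$ has an extension lying above $\chi$), while closedness and convexity are inherited from the affine constraint $v|_\Gamma = g$ and the pointwise inequality $v\ge\chi$. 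Since $\AA$ is an affine translate of the closed convex cone-like set $\KK\subseteq H^1_0(\Omega)$, the map $v\mapsto\widetilde v := v-\widehat g$ is an affine bijection $\AA\to\KK$, and $\KK$ is likewise closed, convex, and nonempty in $H^1_0(\Omega)$.

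For (i)~$\Leftrightarrow$~(ii), I would not apply Lemma~\ref{lem:minimization} directly on $\AA$ (which is not a subspace of a Hilbert space but an affine subset of $\hone$); instead I would transport the minimization problem to $\KK$. Writing $v = \widetilde v + \widehat g$ and expanding $\JJ(v) = \tfrac12\edual{\widetilde v + \widehat g}{\widetilde v + \widehat g} - (f,\widetilde v+\widehat g)$ gives, up to the additive constant $\tfrac12\edual{\widehat g}{\widehat g} - (f,\widehat g)$ which does not affect the minimizer,
\begin{align*}
\JJ(v) = \tfrac12\edual{\widetilde v}{\widetilde v} - \big[(f,\widetilde v) - \edual{\widehat g}{\widetilde v}\big] + \text{const},
\end{align*}
which is exactly the functional of~\eqref{eq:minimization} on $\HH = H^1_0(\Omega)$ with the bounded linear functional $\dual{L}{\widetilde v} := (f,\widetilde v) - \edual{\widehat g}{\widetilde v}$ (bounded by Cauchy--Schwarz and $\widehat g,f\in L^2$, $\widehat g\in H^1$). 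Thus $u$ minimizes $\JJ$ over $\AA$ iff $\widetilde u$ minimizes the shifted functional over $\KK$, and Lemma~\ref{lem:minimization} converts this into the variational inequality $\edual{\widetilde u}{\widetilde v-\widetilde u}\ge \dual{L}{\widetilde v-\widetilde u}$ for all $\widetilde v\in\KK$, i.e.\ statement (iii). Unpacking $\dual{L}{\cdot}$ yields (iii) in the displayed form, and substituting back $\widetilde v - \widetilde u = v - u$ and $\edual{\widehat g}{v-u} = \edual{\widehat g}{\widetilde v - \widetilde u}$ shows this is equivalent to $\edual{u}{v-u}\ge(f,v-u)$ for all $v\in\AA$, which is (ii). This simultaneously delivers (ii)~$\Leftrightarrow$~(iii) and, chained with the minimization characterization, (i)~$\Leftrightarrow$~(ii).

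I expect the only genuinely delicate point to be verifying that $\AA$ (equivalently $\KK$) is nonempty, since this is where the hypotheses $\chi\in H^2(\Omega)$ and $\chi\le g$ a.e.\ on $\Gamma$ are actually used; the argument is that one can choose the extension $\widehat g$ so that $\chi - \widehat g \le 0$ on $\Gamma$ in the trace sense, whence a truncation such as $\max\{\chi-\widehat g,0\}\in H^1_0(\Omega)$ lies in $\KK$. Everything else is routine: boundedness of $L$, closedness/convexity of $\KK$, and the bookkeeping of the affine shift. I would present the proof compactly by first recording the nonemptiness and convexity of $\AA$, then doing the change of variables to reduce to Lemma~\ref{lem:minimization}, and finally reading off all three equivalences from the resulting identity, noting that the equivalence of the two variational inequalities (ii) and (iii) is immediate from $v-u = \widetilde v - \widetilde u$ once the functionals are matched.
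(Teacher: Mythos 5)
Your argument is correct and is precisely the ``elementary algebraic manipulation'' the paper has in mind (and carries out implicitly in the proof of Proposition~\ref{thm:stability}): shift by the extension $\widehat g$, absorb $-\edual{\widehat g}{\cdot}$ into the linear functional $L$ so that Lemma~\ref{lem:minimization} applies on $\HH=H^1_0(\Omega)$, and read off (ii)$\Leftrightarrow$(iii) from $v-u=\widetilde v-\widetilde u$. The only cosmetic remark is that the real reason one cannot apply Lemma~\ref{lem:minimization} directly on $\AA$ is that $\edual\cdot\cdot$ fails to be a scalar product on all of $H^1(\Omega)$ (not that $\AA$ fails to be a closed convex subset), but this does not affect your proof.
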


\subsection{Reduction to problem with zero obstacle}
\label{section:continuous:reformulation}%
The following proposition allows to restrict to obstacle problems~\eqref{eq:obstacle}, where the obstacle $\chi$ is even zero. This provides the formulation which can be treated by our adaptive method below.

\begin{proposition}\label{prop:zero_obstacle}
For some smooth obstacle $\chi\in H^2(\Omega)$, the obstacle problem~\eqref{eq:obstacle} with data $(\chi,g,f)$ is equivalent to the obstacle problem with data $(0,g-\chi|_\Gamma,f+\Delta\chi)$. If $u\in H^1(\Omega)$ solves the obstacle problem with data $(0,g-\chi|_\Gamma,f+\Delta\chi)$, $u+\chi$ is the unique solution with respect to the data $(\chi,g,f)$.
\end{proposition}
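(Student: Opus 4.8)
The plan is to reduce the obstacle problem with data $(\chi,g,f)$ to the zero-obstacle problem by the substitution $w := v - \chi$. First I would observe that since $\chi\in H^2(\Omega)$, the map $v\mapsto v-\chi$ is an affine bijection of $H^1(\Omega)$ onto itself, and it restricts to a bijection between the admissible set $\AA$ for the data $(\chi,g,f)$ and the admissible set
\[
\AA_0 := \newset{w\in\hone}{w\ge 0\text{ a.e.\ in }\Omega,\ w|_\Gamma = g - \chi|_\Gamma\text{ a.e.\ on }\Gamma}
\]
for the data $(0,g-\chi|_\Gamma,f+\Delta\chi)$: indeed $v\ge\chi$ a.e.\ in $\Omega$ iff $w\ge 0$ a.e.\ in $\Omega$, and $v|_\Gamma = g$ iff $w|_\Gamma = g-\chi|_\Gamma$, which also shows that the compatibility condition $\chi\le g$ on $\Gamma$ is exactly the condition $0\le g-\chi|_\Gamma$ on $\Gamma$ needed for $\AA_0$ to be nonempty.

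Next I would compute how the energy functional transforms. Writing $\JJ$ for the functional associated with $(\chi,g,f)$ and $\JJ_0$ for the one associated with $(0,g-\chi|_\Gamma,f+\Delta\chi)$, I expand
\[
\JJ(v) = \tfrac12\edual{w+\chi}{w+\chi} - (f,w+\chi)
= \tfrac12\edual{w}{w} + \edual{\chi}{w} - (f,w) + \big(\tfrac12\edual{\chi}{\chi} - (f,\chi)\big).
\]
The key step is to rewrite $\edual{\chi}{w} = \int_\Omega\nabla\chi\cdot\nabla w\,dx$ using integration by parts (Green's formula): since $\chi\in H^2(\Omega)$ and $w\in H^1(\Omega)$ with $w|_\Gamma = g-\chi|_\Gamma$, one gets $\int_\Omega\nabla\chi\cdot\nabla w\,dx = -\int_\Omega(\Delta\chi)\,w\,dx + \int_\Gamma \partial_n\chi\,(g-\chi|_\Gamma)\,ds = -(\Delta\chi,w) + c_\Gamma$, where the boundary term $c_\Gamma$ does not depend on the interior values of $w$. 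Hence $\JJ(v) = \tfrac12\edual{w}{w} - (f+\Delta\chi,w) + \text{const}$, i.e.\ $\JJ(v) = \JJ_0(w) + C$ with $C$ independent of $v$. This is exactly $\JJ_0$ up to an additive constant, so $v$ minimizes $\JJ$ over $\AA$ if and only if $w = v-\chi$ minimizes $\JJ_0$ over $\AA_0$; by Proposition~\ref{prop:continuous:solvability} both minimizers exist and are unique, which yields the claimed equivalence and the formula $u\mapsto u+\chi$ for recovering the solution of the original problem.

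The main obstacle is making the boundary term in Green's formula rigorous and confirming it is truly inert: one must check that $\partial_n\chi$ makes sense (it lies in $H^{1/2}(\Gamma)\subset L^2(\Gamma)$ since $\chi\in H^2(\Omega)$, or one argues by density of smooth functions) and, more importantly, that the additive constant genuinely does not couple to the variable $w$ — the boundary datum $g-\chi|_\Gamma$ is fixed across all competitors in $\AA_0$, so $c_\Gamma$ is the same for every admissible $w$ and the term $\tfrac12\edual{\chi}{\chi}-(f,\chi)$ is likewise a fixed number. Once this is in place, the equivalence is immediate. Alternatively, and perhaps more cleanly, I would bypass the energy computation entirely and work with the variational inequality characterization from Proposition~\ref{prop:continuous:solvability}: $u$ solves~\eqref{eq:var_in} for $(\chi,g,f)$ iff for all $v\in\AA$, $\edual{u}{v-u}\ge(f,v-u)$; substituting $u = u_0+\chi$, $v = v_0+\chi$ with $u_0,v_0\in\AA_0$ and using $\edual{\chi}{v_0-u_0} = -(\Delta\chi,v_0-u_0)$ (the boundary term now vanishes because $v_0-u_0\in H^1_0(\Omega)$), this becomes $\edual{u_0}{v_0-u_0}\ge(f+\Delta\chi,v_0-u_0)$ for all $v_0\in\AA_0$, which is precisely the variational inequality for the data $(0,g-\chi|_\Gamma,f+\Delta\chi)$. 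This route avoids the nuisance additive constant altogether and makes the integration-by-parts step transparent, so I would present it this way.
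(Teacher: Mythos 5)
Your proposal is correct, and the route you ultimately choose to present --- characterizing the solution by the variational inequality of Proposition~\ref{prop:continuous:solvability}, substituting $\widetilde u = u-\chi$, $\widetilde v = v-\chi$, and integrating by parts against $w=\widetilde v-\widetilde u\in H^1_0(\Omega)$ so that the boundary term vanishes --- is exactly the paper's proof. The energy-based variant you sketch first is also sound, but as you note it requires tracking an inert additive constant and a boundary term, which the variational-inequality argument avoids.
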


\begin{proof}
The solution $u\in\AA$ of the obstacle problem with data $(\chi,g,f)$ is characterized by
\begin{align*}
 \edual{u}{v-u} \ge (f, v-u)
\end{align*}
for all $v\in\AA = \newset{v \in H^1(\Omega)}{v \ge \chi \text{ a.e.\ in } \Omega, v|_\Gamma = g}$.
Substitution $\widetilde u := u-\chi$ and $\widetilde v := v-\chi$ shows that this is equivalent to
\begin{align*}
 \edual{\widetilde u}{\widetilde v-\widetilde u} \ge (f,\widetilde v-\widetilde u) - \edual{\chi}{\widetilde v-\widetilde u}
 \quad\text{for all }
 \widetilde v\in\widetilde \AA,
\end{align*}
where $\widetilde\AA :=
\newset{\widetilde v \in H^1(\Omega)}{\widetilde v \ge 0\text{ a.e.\ in } \Omega, v|_\Gamma = g-\chi|_\Gamma}$. Finally, integration by parts with $w:=\widetilde v-\widetilde u\in H^1_0(\Omega)$ proves
\begin{align*}
 - \edual{\chi}{\widetilde v-\widetilde u}
 = -\int_\Omega\nabla\chi\cdot\nabla w\,dx
 = \int_\Omega\Delta\chi\,w\,dx.
\end{align*}
This concludes the proof.
\end{proof}

\subsection{Model problem}
\label{section:model:problem}%
According to the observation of Proposition~\ref{prop:zero_obstacle}, we may restrict to the case $\chi=0$ in the following. Having obtained a FE approximation $U_\ell$ 
of $u$ for the zero obstacle case, we may simply consider $U_\ell+\chi$ to obtain an approximation of the original problem with 
obstacle $\chi\in H^2(\Omega)$. \dpr{Since our analysis directly covers affine obstacles, we shall allow that $\chi$ is
globally affine on $\overline \Omega$, i.e.\ we consider Problem~\eqref{eq:obstacle} with respect to the data $(\chi, g, f) \in \PP^1(\overline\Omega)\times H^{1/2}(\Gamma)\times L^2(\Omega)$.} 

\section{Galerkin Discretization}
\label{section:discrete}%

\subsection{Problem formulation}
\label{section:discrete:problem}%
For the numerical solution of \eqref{eq:obstacle} by an adaptive finite
element method, we consider conforming and in the sense of Ciarlet regular
triangulations $\TT_\ell$ of $\Omega$ and denote the standard P1-FEM space
of globally continuous and piecewise affine functions by $\SS^1(\TT_\ell)$. Note that
a discrete function $V_\ell\in\SS^1(\TT_\ell)$ cannot satisfy the
inhomogeneous Dirichlet conditions $g\in H^{1/2}(\Gamma)$ in general. We therefore have to approximate $g\approx g_\ell \in \SS^1(\TT_\ell|_\Gamma)$,
 where the space $\SS^1(\TT_\ell|_\Gamma)$ denotes the space of globally continuous and piecewise affine functions on the
boundary $\Gamma$.
For this discretization, we assume additional regularity $g\in H^1(\Gamma)$ and
consider the approximation $g_\ell \in \SS^1(\TT_\ell|_\Gamma)$ which is
derived by nodal interpolation of the boundary data. Note that nodal
interpolation is well-defined since the Sobolev inequality on the 1D
manifold $\Gamma$ predicts the continuous inclusion
$H^1(\Gamma)\subset C(\Gamma)$. Altogether, the set of
discrete admissible functions $\AA_\ell$ is given by
\begin{align}\label{eq:Aell}
 \AA_\ell := \newset{V_\ell \in \SS^1(\TT_\ell)}{V_\ell \ge 0 \text{ in }\Omega, \, V_\ell = g_\ell \text{ on }\Gamma},
\end{align}
and the discrete minimization problem reads: \emph{Find $U_\ell\in \AA_\ell$ such that}
\begin{align}\label{eq:obstacle_discrete}
\JJ(U_\ell) = \min_{V_\ell\in \AA_\ell}\JJ(V_\ell).
\end{align}

\begin{remark}
Note that $0\le g\in H^1(\Gamma)\subset C(\Gamma)$
implies that $0 \le g(z) = g_\ell(z)$ for all nodes $z\in\Gamma$.
Therefore, we conclude $0 \le g_\ell$ on $\Gamma$
for the nodal interpolant $g_\ell\in\SS^1(\TT_\ell|_\Gamma)$.
\end{remark}
%

\subsection{Notation}
\label{section:discrete:notation}%
From now on, $\NN_\ell$ denotes the set of nodes of the regular triangulation $\TT_\ell$.
The set of all interior edges $E=T^+\cap T^-$ for certain elements $T^+,T^-\in\TT_\ell$ is
denoted by $\EE_\ell^\Omega$. The set of all edges of $\TT_\ell$ is denoted by $\EE_\ell$. In particular,
$\EE_\ell|_\Gamma := \EE_{\ell}^\Gamma:=\EE_\ell\backslash\EE_\ell^\Omega$ contains all boundary edges and provides some partition of $\Gamma$.

We recall that $\SS^1(\TT_\ell) = \set{V_\ell\in C(\overline\Omega)}{V_\ell|_T\text{ affine for all }T\in\TT_\ell}$ denotes the conforming P1-finite element space. Moreover,
$\SS^1_0(\TT_\ell) = \SS^1(\TT_\ell)\cap H^1_0(\Omega) = \set{V_\ell\in\SS^1(\TT_\ell)}{V_\ell|_\Gamma=0}$.

\subsection{Unique solvability}
\label{section:discrete:solvability}%
In this section, we recall that the discrete obstacle problem~\eqref{eq:obstacle_discrete} admits a unique solution which is again characterized by a variational inequality.

\begin{proposition}\label{prop:discrete:solvability}
The discrete obstacle problem~\eqref{eq:obstacle_discrete} admits a unique
solution $U_\ell \in \AA_\ell$, which is equivalently characterized
by the variational inequality
\begin{align}\label{eq:obstacle_discrete:inequality}
 \edual{U_\ell}{V_\ell - U_\ell}\ge \dual{f}{V_\ell - U_\ell}
\end{align}
for all $V_\ell\in\AA_\ell$.\qed
\end{proposition}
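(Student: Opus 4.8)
The plan is to derive Proposition~\ref{prop:discrete:solvability} as a direct application of the abstract minimization result in Lemma~\ref{lem:minimization}, exactly mirroring the continuous argument that underlies Proposition~\ref{prop:continuous:solvability}. The key observation is that the discrete admissible set $\AA_\ell$ from~\eqref{eq:Aell} is an affine subset of the finite-dimensional space $\SS^1(\TT_\ell)$, and it carries the same structure as $\AA$: an affine constraint on $\Gamma$ and a one-sided (convex) constraint in $\Omega$.

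First I would fix the discrete lifting: let $\widehat g_\ell\in\SS^1(\TT_\ell)$ be the nodal interpolant of $g$ extended by zero to all interior nodes (or any discrete function with $\widehat g_\ell|_\Gamma=g_\ell$), and set $\widetilde U_\ell:=U_\ell-\widehat g_\ell$. Then $\AA_\ell$ is in bijection with
\begin{align*}
\KK_\ell:=\set{\widetilde V_\ell\in\SS^1_0(\TT_\ell)}{\widetilde V_\ell\ge-\widehat g_\ell\text{ in }\Omega}
\end{align*}
via $V_\ell\mapsto V_\ell-\widehat g_\ell$, and one checks that $\KK_\ell$ is non-empty (it contains $\widetilde V_\ell=0$ whenever $g_\ell\ge0$ on $\Gamma$, which holds by the Remark following~\eqref{eq:obstacle_discrete}), convex (it is the intersection of a linear subspace with finitely many half-spaces), and closed (trivially, since $\SS^1_0(\TT_\ell)$ is finite-dimensional). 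Taking $\HH=\SS^1_0(\TT_\ell)$ equipped with the energy scalar product $\edual\cdot\cdot$ — which is indeed a scalar product on $H^1_0(\Omega)$ and hence on this subspace — and the bounded linear functional $\dual{L}{\widetilde V_\ell}:=(f,\widetilde V_\ell)-\edual{\widehat g_\ell}{\widetilde V_\ell}$, Lemma~\ref{lem:minimization} yields a unique minimizer $\widetilde U_\ell\in\KK_\ell$ of $\widetilde\JJ(\widetilde V_\ell):=\frac12\edual{\widetilde V_\ell}{\widetilde V_\ell}-\dual{L}{\widetilde V_\ell}$, equivalently characterized by $\edual{\widetilde U_\ell}{\widetilde V_\ell-\widetilde U_\ell}\ge\dual{L}{\widetilde V_\ell-\widetilde U_\ell}$ for all $\widetilde V_\ell\in\KK_\ell$.

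It then remains to translate back: since $\JJ(V_\ell)=\widetilde\JJ(V_\ell-\widehat g_\ell)+\text{const}$ (the additive constant $\frac12\edual{\widehat g_\ell}{\widehat g_\ell}-(f,\widehat g_\ell)$ being independent of $V_\ell$), the minimizer of $\JJ$ over $\AA_\ell$ is $U_\ell=\widetilde U_\ell+\widehat g_\ell$, and it is unique. Substituting $\widetilde V_\ell=V_\ell-\widehat g_\ell$ and $\widetilde U_\ell=U_\ell-\widehat g_\ell$ into the discrete variational inequality, the $\widehat g_\ell$-terms cancel between the two sides and one recovers precisely~\eqref{eq:obstacle_discrete:inequality} for all $V_\ell\in\AA_\ell$. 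Conversely, any $U_\ell\in\AA_\ell$ satisfying~\eqref{eq:obstacle_discrete:inequality} gives, after the same substitution, the abstract variational inequality for $\widetilde U_\ell$, hence equals the unique minimizer; this establishes the claimed equivalence.

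I do not anticipate a genuine obstacle here: the statement is the finite-dimensional analogue of Proposition~\ref{prop:continuous:solvability}, and the only points requiring a word of care are (a) verifying $\KK_\ell\neq\emptyset$, which uses $g_\ell\ge0$ on $\Gamma$ as noted in the Remark, and (b) observing that $\edual\cdot\cdot$ is positive definite on $\SS^1_0(\TT_\ell)$ so that Lemma~\ref{lem:minimization} applies — both immediate. Alternatively, one could bypass the lifting and argue directly that $\AA_\ell$ is a closed convex non-empty subset of the Hilbert space $\SS^1(\TT_\ell)$ (with, say, the full $H^1$-inner product) and invoke the standard theory of variational inequalities for the coercive, bounded, symmetric bilinear form $\edual\cdot\cdot$; the lifting route is preferred only because it reuses verbatim the machinery already set up in Lemma~\ref{lem:zeroboundary}.
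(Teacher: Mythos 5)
Your proposal is correct and follows the same route the paper takes: translate the problem into $\SS^1_0(\TT_\ell)$ via a discrete lifting $\widehat g_\ell$ of $g_\ell$ and then apply Lemma~\ref{lem:minimization} in analogy to Lemma~\ref{lem:zeroboundary} — the paper's ``proof'' is simply a one-paragraph remark to this effect, pointing out that a discrete lifting exists (it invokes the Scott--Zhang operator for this, whereas you use the even simpler nodewise extension, but either yields a valid $\widehat g_\ell\in\SS^1(\TT_\ell)$ with $\widehat g_\ell|_\Gamma=g_\ell$). Your explicit verification that $\KK_\ell$ is convex, closed, and non-empty (using $g_\ell\ge0$ from the Remark) just fills in the details the paper leaves implicit by referring to the continuous case.
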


The proof of Proposition~\ref{prop:discrete:solvability} is obtained as in the continuous case. It relies on the fact that discrete boundary data $g_\ell\in\SS^1(\TT_\ell|_\Gamma)$ can be lifted to a discrete function $\widehat g_\ell\in\SS^1(\TT_\ell)$ with $\widehat g_\ell|_\Gamma = g_\ell$. The proof of the latter is a consequence of (and even equivalent to, see~\cite{ds2003}) the existence of the Scott-Zhang quasi-interpolation operator $P_\ell:H^1(\Omega)\to\SS^1(\TT_\ell)$, see~\cite{sz}, which is a linear and continuous projection onto $\SS^1(\TT_\ell)$, i.e.\
\begin{align}\label{eq:sz:cont}
 P_\ell^2 = P_\ell
 \quad\text{and}\quad
 \norm{P_\ell v}{H^1(\Omega)} \le \c{sz:stetig} \norm{v}{H^1(\Omega)},
 \quad\text{for all }v\in H^1(\Omega),
\end{align}
that preserves discrete boundary conditions, i.e.
\begin{align}
 (P_\ell v)|_\Gamma = v|_\Gamma
 \quad
 \text{for all }v\in H^1(\Omega)
 \text{ with }v|_\Gamma \in \SS^1(\TT_\ell|_\Gamma).
\end{align}
The constant $\setc{sz:stetig} > 0$ depends only on the shape regularity constant $\sigma(\TT_\ell)$ and on the diameter of $\Omega$. Moreover, $P_\ell$ has a local first-order approximation property which is, however, not used throughout.

\section{Adaptive Mesh-Refining Algorithm and Main Results}
\label{section:algorithm}%

\begin{figure}[t]
\centering
\psfrag{T0}{}
\psfrag{T1}{}
\psfrag{T2}{}
\psfrag{T3}{}
\psfrag{T4}{}
\psfrag{T12}{}
\psfrag{T34}{}
\includegraphics[width=30mm]{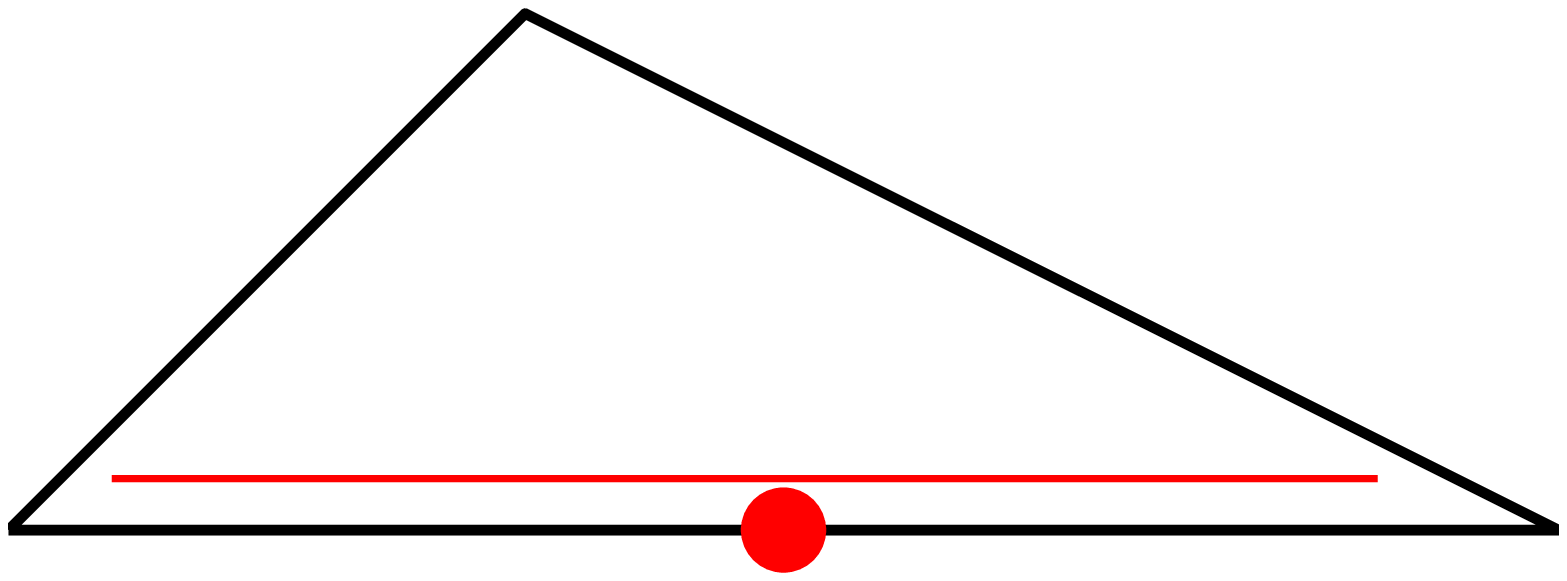} \quad
\includegraphics[width=30mm]{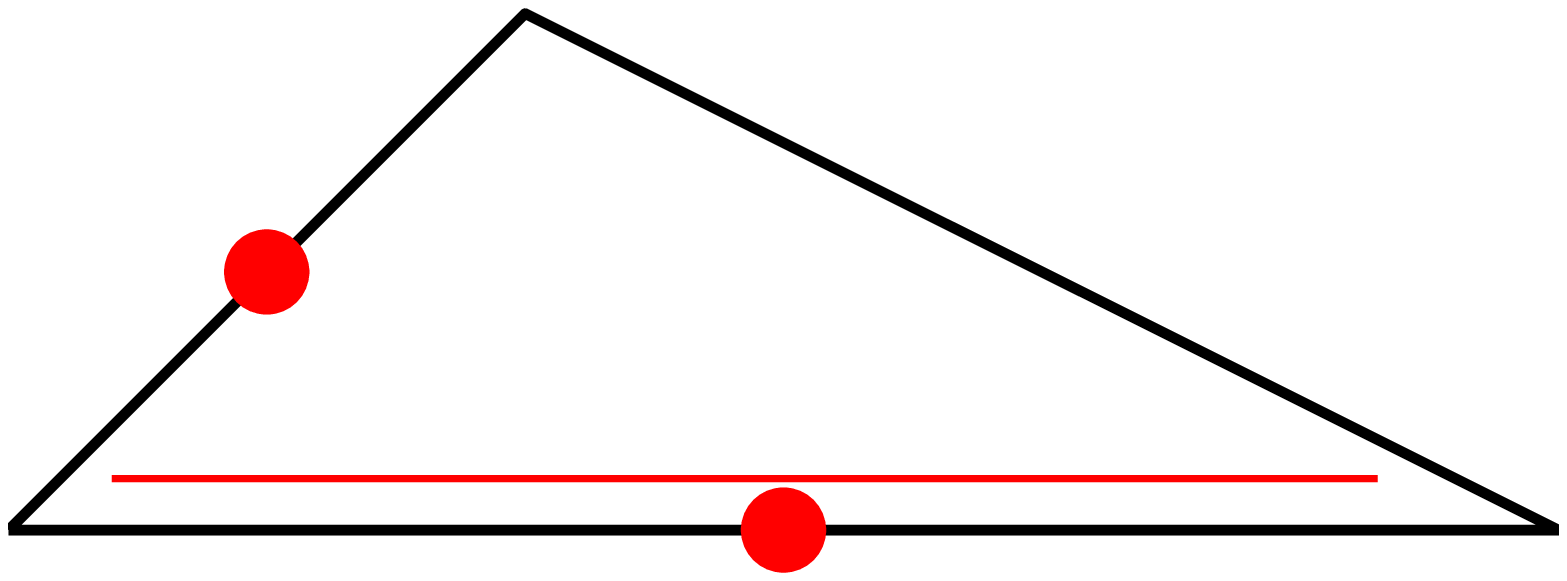} \quad
\includegraphics[width=30mm]{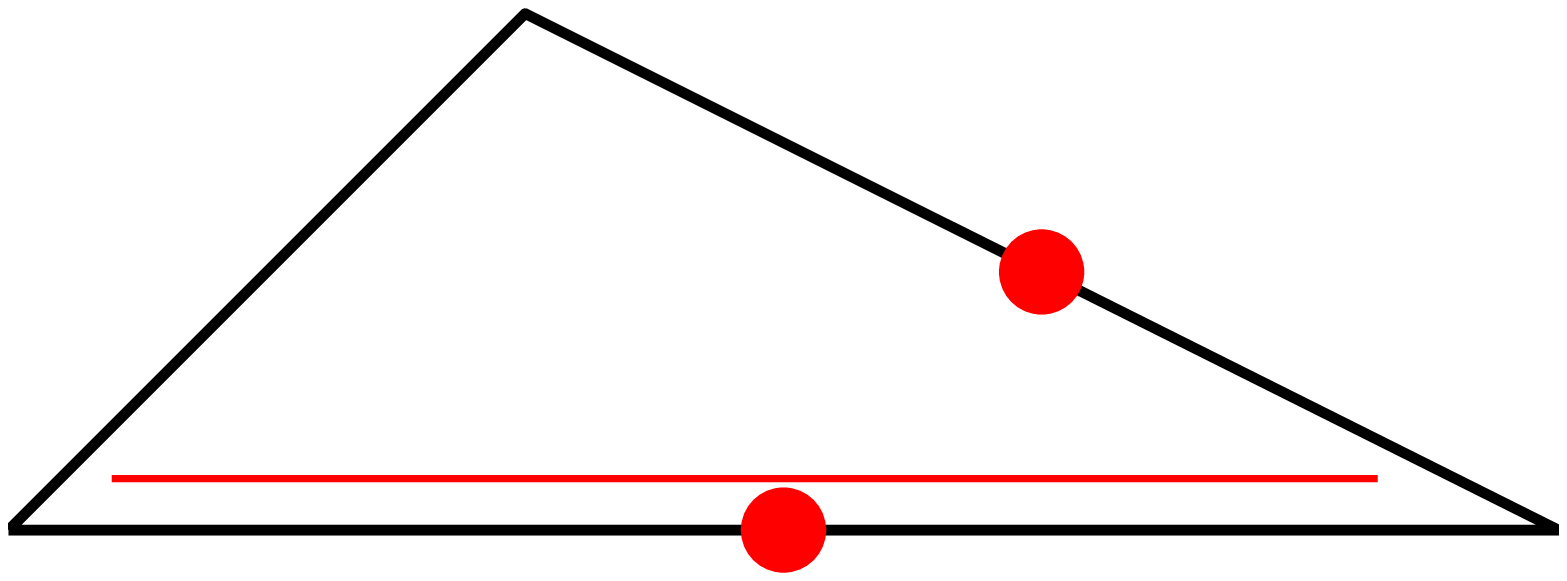} \quad
\includegraphics[width=30mm]{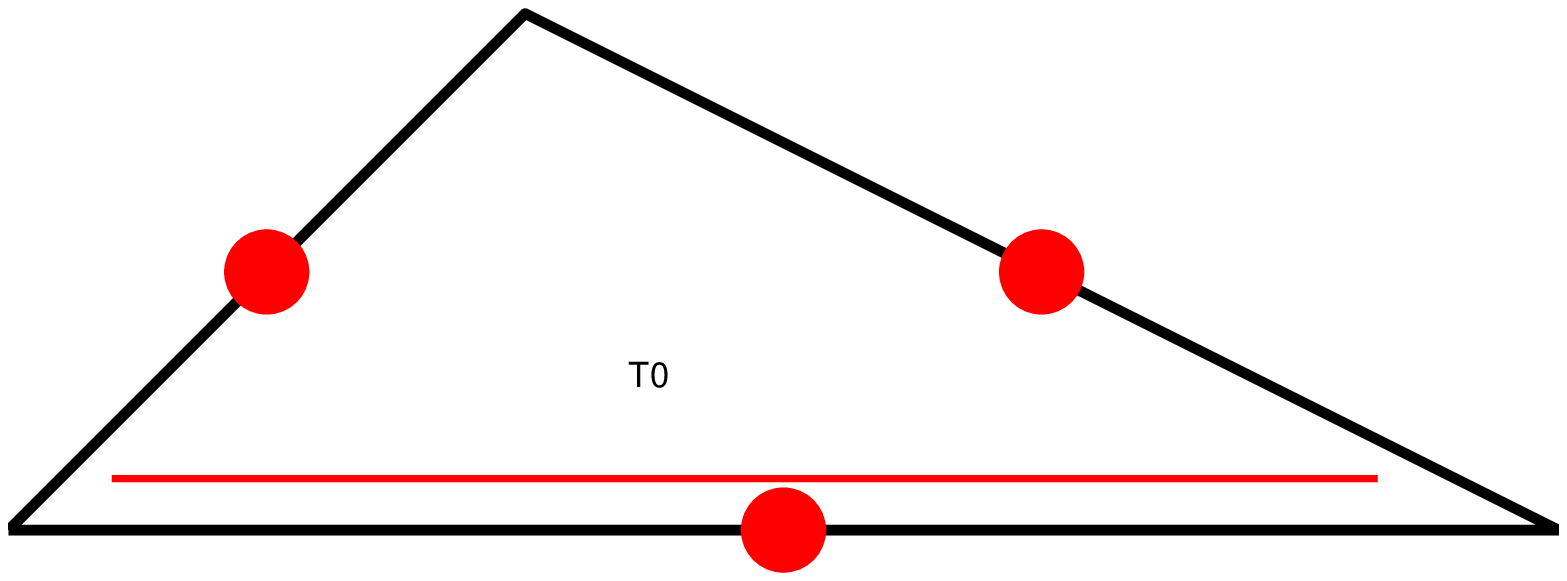} \\
\includegraphics[width=30mm]{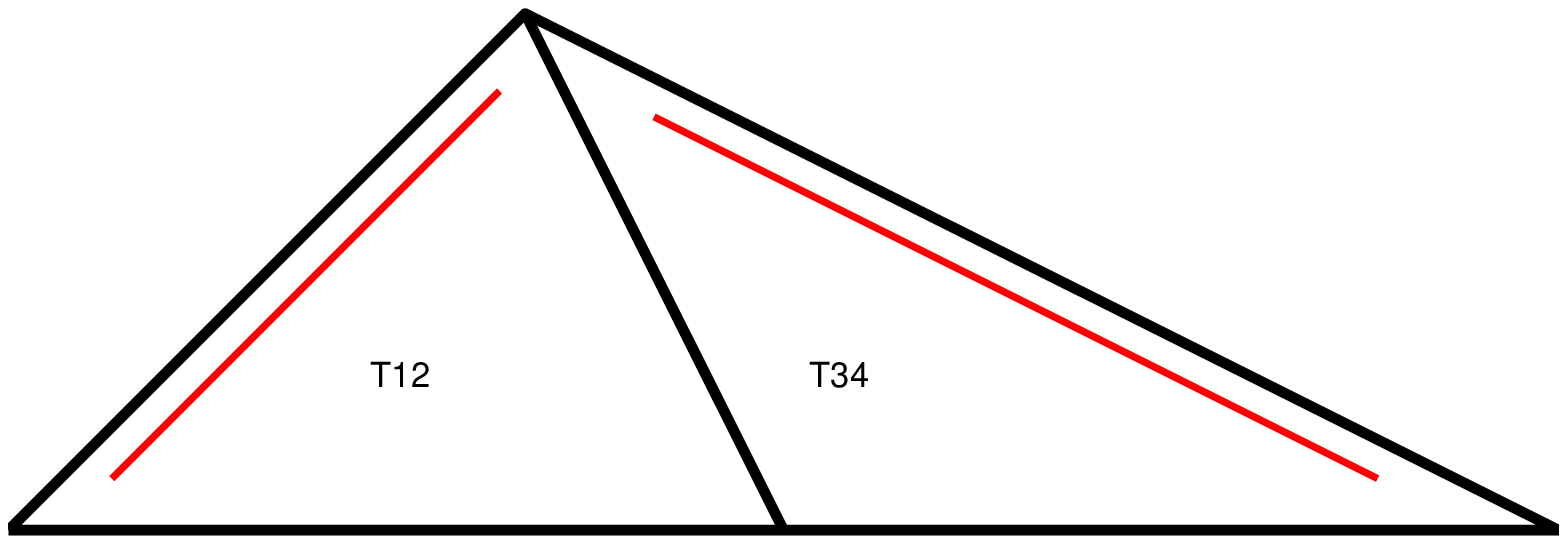} \quad
\includegraphics[width=30mm]{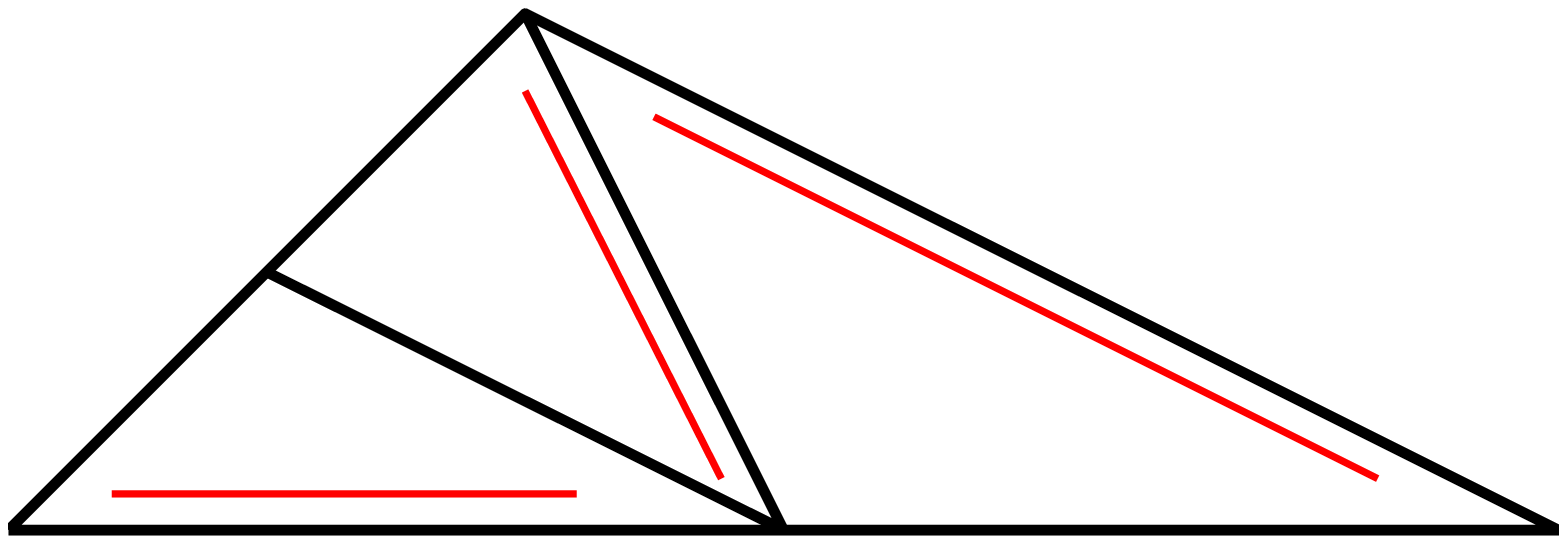}\quad
\includegraphics[width=30mm]{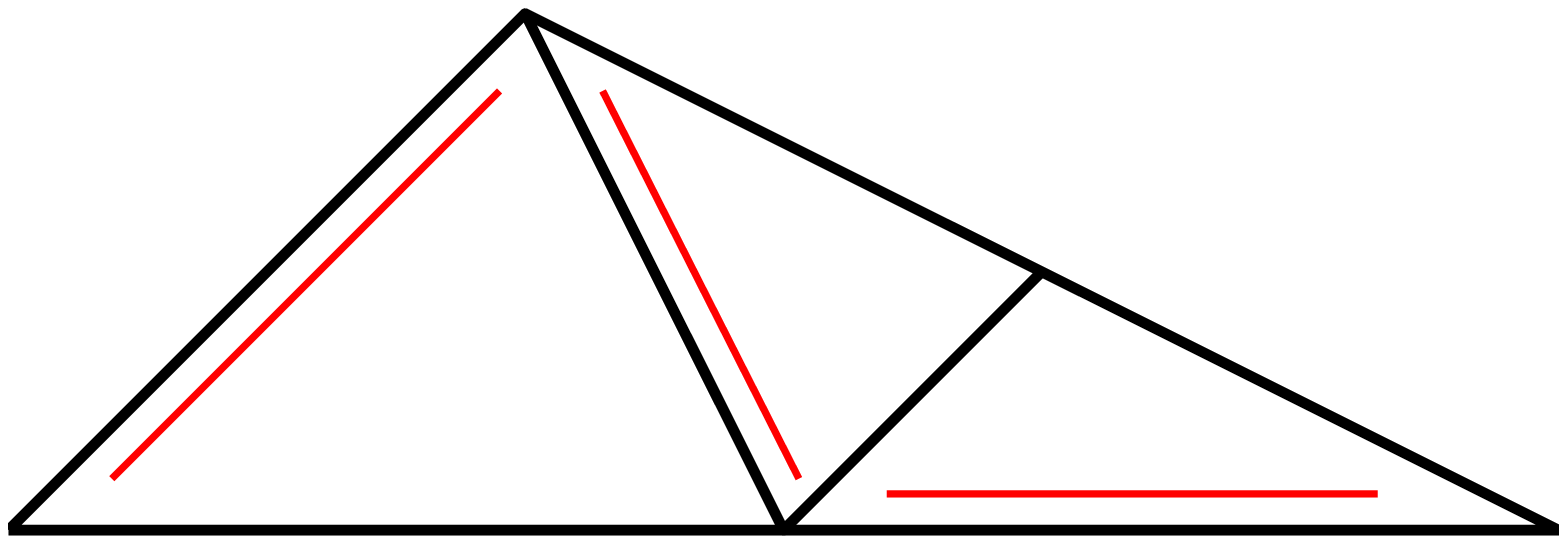}\quad
\includegraphics[width=30mm]{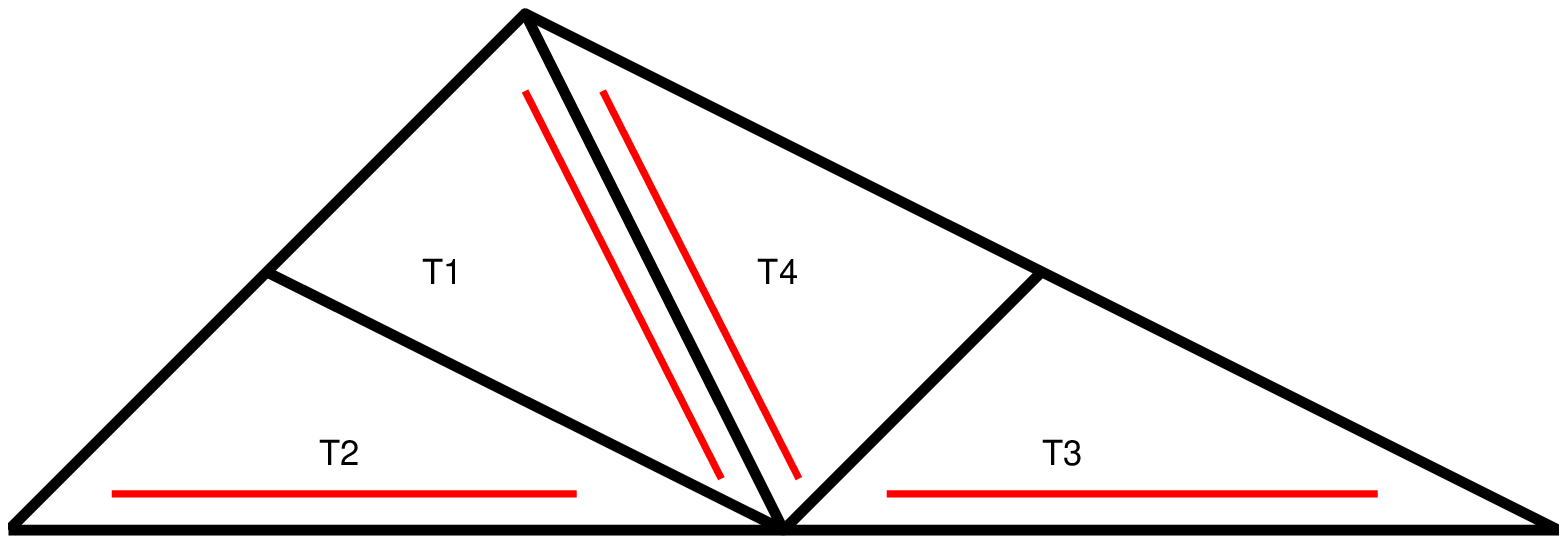}
\caption{
For each triangle $T\in\TT$, there is one fixed \emph{reference edge},
indicated by the double line (left, top). Refinement of $T$ is done by bisecting
the reference edge, where its midpoint becomes a new node. The reference
edges of the son triangles are opposite to this newest vertex (left, bottom).
To avoid hanging nodes, one proceeds as follows:
We assume that certain edges of $T$, but at least the reference edge,
are marked for refinement (top).
Using iterated newest vertex bisection, the element is then split into
2, 3, or 4 son triangles (bottom).}
\label{fig:nvb}
\end{figure}
%
\subsection{Newest vertex bisection}
\label{section:nvb}%
For the mesh-refinement, we use newest-vertex bisection. Assume that $\MM_\ell\subseteq\EE_\ell$ is a set of edges 
which have to be refined. 
The refinement rules are shown in Figure~\ref{fig:nvb}, and the
reader is also referred to~\cite[Chapter 4]{verfuerth}.
We stress a certain decay of the mesh-widths:
\begin{itemize}
\item Marked edges $E\in\MM_\ell$ are split into two edges $E',E''\in\EE_{\ell+1}$ of half length.
\item If at least one edge $E$ of an element $T\in\TT_\ell$ is marked, $T$ is refined into up to four son elements $T'\in\TT_{\ell+1}$ with area $|T|/4\le |T'|\le |T|/2$, cf.\ Figure~\ref{fig:nvb}.
\end{itemize}
Moreover, given an initial mesh $\TT_0$, newest vertex bisection only leads to at most $4\cdot\#\TT_0$ similarity classes of triangles. In particular, the generated meshes are uniformly shape regular
\begin{align}
 \sup_{\ell\in\N}\sigma(\TT_\ell) < \infty
 \quad\text{with}\quad
 \sigma(\TT_\ell) = \max_{T\in\TT_\ell}\frac{\diam(T)^2}{|T|}.
\end{align}
Furthermore, the number of different shapes of e.g.\ node patches that can occur, is finite. These observations will be necessary in the scaling arguments below.

\subsection{Weakly reliable error estimator}
\label{section:estimator}%
Let $u\in\AA$ denote the continuous solution of~\eqref{eq:obstacle} and $U_\ell\in\AA_\ell$ be the discrete solution of~\eqref{eq:obstacle_discrete}
for some fixed triangulation $\TT_\ell$. To steer the adaptive mesh-refinement, we use some residual-based error estimator
\begin{align}\label{eq:eta}
\varrho_\ell ^2
:= \sum_{E\in\EE_\ell^\Omega}\big(\eta_\ell(E)^2+\osc_\ell(E)^2\big)
+ \sum_{E \in \EE_{\ell}^\Gamma} \big(\apx_\ell(E)^2 + \osc_\ell(E)^2\big)
\end{align}
which has essentially been introduced in~\cite{cbh2007} for homogeneous Dirichlet data $g=0$.
First, $\eta_\ell(E)^2$ denotes the weighted $L^2$-norms of the normal jump
\begin{align}\label{eq:etaE}
\eta_\ell(E)^2 := h_E\,\norm{[\partial_nU_\ell]}{L^2(E)}^2
\quad\text{for }E\in\EE_\ell^\Omega
\end{align}
with $h_E=\diam(E)$ the length of $E$ and $[\cdot]$ the jump over an interior edge $E=T^+\cap T^-\in\EE_\ell^\Omega$.
Second, for interior edges, $\osc_\ell(E)^2$ denotes the data oscillations of $f$
\begin{align}\label{eq:oscE}
\osc_\ell(E)^2 := |\Omega_{\ell,E}|\,\norm{f-f_{\Omega_{\ell,E}}}{L^2(\Omega_{\ell,E})}^2
\quad\text{for }E\in\EE_\ell^\Omega
\end{align}
over the patch $\Omega_{\ell,E} = T^+\cup T^-$ associated with $E$,
where the corresponding integral mean of $f$ is denoted by $f_{\Omega_{\ell,E}}=(1/|\Omega_{\ell,E}|)\,\int_{\Omega_{\ell,E}}f\,dx$.
Third, for boundary edges $E\in\EE_\ell^\Gamma$ and $T\in\TT_\ell$ the unique element with $E\subseteq\partial T\cap\Gamma$, $\varrho_\ell$ involves the weighted element residuals
\begin{align}\label{eq:oscT}
\osc_\ell(E)^2 := |T| \|f\|^2_{L^2(T)}
\quad\text{for }E\in\EE_\ell^\Gamma.
\end{align}
Finally and in order to control the approximation of the nonhomogeneous Dirichlet data $g$ by its nodal interpolant $g_\ell$, we apply an idea from~\cite{agp2009} and use
\begin{align}\label{eq:apxE}
\apx_\ell(E)^2 := h_E\,\norm{(g - g_\ell)'}{L^2(E)}^2
\quad\text{for }E\in\EE_{\ell}^\Gamma,
\end{align}
where $(\cdot)'$ denotes the arclength derivative.

To state a reliability result for the proposed error estimator $\varrho_\ell$, we need to introduce a continuous auxiliary problem. Given the discrete Dirichlet data $g_\ell\in\SS^1(\TT_\ell|_\Gamma)$, we define
\begin{align}\label{eq:auxiliary0}
 \AA_\ell^\star := \newset{v_\ell \in \hone}{v_\ell \ge \chi \text{ a.e.\ in } \Omega, \, v_\ell|_\Gamma = g_\ell\text{ a.e.\ on }\Gamma}.
\end{align}
Applying Proposition~\ref{prop:continuous:solvability} for the data $(\chi,g_\ell, f)$, we see that the auxiliary problem
\begin{align}\label{eq:auxiliary}
 \JJ(u_\ell)= \min_{v_\ell\in\AA_\ell^\star}\JJ(v_\ell)
\end{align}
admits a unique solution $u_\ell\in\AA_\ell^\star$. The following theorem now states weak reliability of $\varrho_\ell$ in the sense that $\varrho_\ell\to0$ implies $\norm{u-U_\ell}{H^1(\Omega)}\to0$ as $\ell\to\infty$. The proof is given in Section~\ref{section:reliability} below.%

\begin{theorem}\label{thm:reliability}
With $u_\ell\in\AA_\ell^\star$ the solution of the auxiliary problem~\eqref{eq:auxiliary}, the error estimator $\varrho_\ell$ from~\eqref{eq:eta} satisfies
\begin{align}\label{eq:rel1}
\frac{1}{2}\enorm{u_\ell - U_\ell}^2 \le \JJ(U_\ell) - \JJ(u_\ell) \le \c{reliable}^2\,\widetilde \varrho_\ell^{\,2},
\end{align}
with
\begin{align}\label{eq:tilde_eta}
\widetilde\varrho_\ell^{\,2} := \sum_{E\in\EE_\ell^\Omega}\big(\eta_\ell(E)^2+\osc_\ell(E)^2\big)
+ \sum_{E\in\EE_\ell^\Gamma}\osc_\ell(E)^2,
\end{align}
as well as
\begin{align}\label{eq:reliability}
 \norm{u - U_\ell}{H^1(\Omega)}
 \le \enorm{u - u_\ell} + \sqrt{2}\c{reliable}\,\varrho_\ell,
\end{align}
where the constant $\setc{reliable}>0$ depends only on $\sigma(\TT_\ell)$ and on $\Omega$. Moreover, there holds
\begin{align}\label{eq:weakreliability}
 \varrho_\ell \xrightarrow{\ell\to\infty} 0
 \quad\Longrightarrow\quad
 \norm{u-U_\ell}{H^1(\Omega)} \xrightarrow{\ell\to\infty} 0.
\end{align}
\end{theorem}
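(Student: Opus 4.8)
\textbf{Proof strategy for Theorem~\ref{thm:reliability}.}
The plan is to establish the three displayed estimates in turn, with the bulk of the work going into the upper bound $\JJ(U_\ell)-\JJ(u_\ell)\le\c{reliable}^2\widetilde\varrho_\ell^{\,2}$, and then to deduce weak reliability~\eqref{eq:weakreliability} as a soft consequence.

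First I would treat the lower bound $\frac12\enorm{u_\ell-U_\ell}^2\le\JJ(U_\ell)-\JJ(u_\ell)$. Here the natural approach is purely algebraic: since both $U_\ell\in\AA_\ell$ and $u_\ell\in\AA_\ell^\star$ and, crucially, $\AA_\ell\subseteq\AA_\ell^\star$ (the discrete admissible functions satisfy $V_\ell\ge\chi$ and $V_\ell|_\Gamma=g_\ell$, hence lie in the continuous auxiliary set), we may insert $v=U_\ell$ into the variational inequality~\eqref{eq:var_in} for $u_\ell$. Combining this with the identity $\JJ(U_\ell)-\JJ(u_\ell)=\frac12\enorm{U_\ell-u_\ell}^2+\edual{u_\ell}{U_\ell-u_\ell}-(f,U_\ell-u_\ell)$, which is just the parallelogram-type expansion of the quadratic functional $\JJ$ around $u_\ell$, yields the claim with room to spare (in fact with a nonnegative extra term). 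The second inequality of~\eqref{eq:rel1}, the estimator upper bound, is the heart of the matter: I expect to bound $\JJ(U_\ell)-\JJ(u_\ell)$ by constructing a suitable competitor in $\AA_\ell^\star$ — for instance $U_\ell$ perturbed so as to become continuously admissible, or more likely by testing the variational inequality for $U_\ell$ against a Scott-Zhang-type interpolant of $u_\ell-U_\ell$, using $P_\ell$ from~\eqref{eq:sz:cont} which preserves the discrete boundary values so that the interpolated function stays in $\AA_\ell$. Elementwise integration by parts then turns $\edual{U_\ell}{\cdot}$ into the normal jumps across interior edges and volume residuals of $f$ on boundary elements (on interior elements $-\Delta U_\ell=0$ since $U_\ell$ is piecewise affine), and the approximation and stability properties of $P_\ell$, combined with local trace and Poincaré inequalities and the uniform shape regularity from Section~\ref{section:nvb}, convert the residual into exactly the terms $\eta_\ell(E)^2$, $\osc_\ell(E)^2$ for $E\in\EE_\ell^\Omega$, and $\osc_\ell(E)^2=|T|\|f\|_{L^2(T)}^2$ for boundary elements. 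The sign condition coming from the obstacle (the coincidence set contributes a term of the right sign) must be handled carefully, as in~\cite{cbh2007,pp2010}; this is the main obstacle and is where the restriction to affine obstacles enters, since $-\Delta\chi=0$ keeps no spurious volume term.

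Next, the $H^1$-error bound~\eqref{eq:reliability} follows from the triangle inequality $\norm{u-U_\ell}{H^1(\Omega)}\le\norm{u-u_\ell}{H^1(\Omega)}+\norm{u_\ell-U_\ell}{H^1(\Omega)}$; the first summand is $\enorm{u-u_\ell}$ because $u-u_\ell\in H^1_0(\Omega)$ (both have boundary trace involving only the difference $g-g_\ell$... more precisely $u|_\Gamma=g$, $u_\ell|_\Gamma=g_\ell$, so one should instead use a Friedrichs inequality on $\Omega$ after noting that the $H^1$-seminorm controls the full norm up to the boundary contribution, or simply absorb it; in the intended argument $\norm{u-u_\ell}{H^1}\lesssim\enorm{u-u_\ell}$ is used loosely and the clean statement~\eqref{eq:reliability} should be read with that equivalence), while the second summand is controlled by $\enorm{u_\ell-U_\ell}\le\sqrt{2(\JJ(U_\ell)-\JJ(u_\ell))}\le\sqrt2\,\c{reliable}\,\widetilde\varrho_\ell\le\sqrt2\,\c{reliable}\,\varrho_\ell$ via the already-proven two-sided estimate~\eqref{eq:rel1} and $\widetilde\varrho_\ell\le\varrho_\ell$.

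Finally, for weak reliability~\eqref{eq:weakreliability} I would argue: if $\varrho_\ell\to0$ then in particular $\widetilde\varrho_\ell\to0$, so by~\eqref{eq:rel1} we get $\enorm{u_\ell-U_\ell}\to0$, and it remains to show $\enorm{u-u_\ell}\to0$. The decay of the Dirichlet oscillation terms, $\sum_{E\in\EE_\ell^\Gamma}\apx_\ell(E)^2\to0$, forces the discrete boundary data $g_\ell$ to converge to $g$ in $H^{1/2}(\Gamma)$ (via an interpolation/scaling estimate between $L^2$ and $H^1$ on each boundary edge, using $h_E\|(g-g_\ell)'\|_{L^2(E)}^2$ together with $\|g-g_\ell\|_{L^2(E)}\lesssim h_E\|(g-g_\ell)'\|_{L^2(E)}$); then a standard stability argument for the obstacle problem — comparing the variational inequalities for $u$ and $u_\ell$ and using the lifting $\LL$ — gives $\enorm{u-u_\ell}\lesssim\norm{g-g_\ell}{H^{1/2}(\Gamma)}\to0$. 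Combining the two convergences through the triangle inequality $\norm{u-U_\ell}{H^1(\Omega)}\le\enorm{u-u_\ell}+\enorm{u_\ell-U_\ell}$ (up to the Friedrichs constant) closes the argument.
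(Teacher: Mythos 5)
Your treatment of the lower bound in~\eqref{eq:rel1} is correct and is essentially Lemma~\ref{lem:rel_estimates} applied to $u_\ell$ and $W=U_\ell\in\AA_\ell\subseteq\AA_\ell^\star$, which is exactly what the paper does. Your sketch of the upper bound (Scott--Zhang interpolation, elementwise integration by parts, treatment of the contact set as in~\cite{cbh2007}) also goes in the right direction; the paper packages this by introducing the Lagrange-multiplier functional $\sigma_\ell\in H^{-1}(\Omega)$ defined by $\dual{\sigma_\ell}{v}=(f,v)-\edual{u_\ell}{v}$, noting that $u_\ell|_\Gamma=U_\ell|_\Gamma$ so the argument of~\cite{cbh2007} applies verbatim to yield $\enorm{u_\ell-U_\ell}^2+\dual{\sigma_\ell}{u_\ell-U_\ell}\lesssim\widetilde\varrho_\ell^{\,2}$, and then using the exact energy identity $\JJ(U_\ell)-\JJ(u_\ell)=\tfrac12\enorm{u_\ell-U_\ell}^2+\dual{\sigma_\ell}{u_\ell-U_\ell}$. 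Your version is less precise but compatible.

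For~\eqref{eq:reliability}, however, your decomposition starts from $\norm{u-U_\ell}{H^1(\Omega)}\le\norm{u-u_\ell}{H^1(\Omega)}+\norm{u_\ell-U_\ell}{H^1(\Omega)}$ and then wants $\norm{u-u_\ell}{H^1(\Omega)}\lesssim\enorm{u-u_\ell}$. As you yourself flag, this step is not legitimate: $u-u_\ell\notin H^1_0(\Omega)$ because $u|_\Gamma=g\ne g_\ell=u_\ell|_\Gamma$, so the Friedrichs inequality does not apply. The paper avoids this by bounding the full $H^1$-norm directly via the norm equivalence $\norm{u-U_\ell}{H^1(\Omega)}\simeq\enorm{u-U_\ell}+\norm{g-g_\ell}{H^{1/2}(\Gamma)}$ (a Rellich-compactness consequence), then splitting $\enorm{u-U_\ell}\le\enorm{u-u_\ell}+\enorm{u_\ell-U_\ell}$ where $u_\ell-U_\ell\in H^1_0(\Omega)$, and controlling the remaining trace term by $\apx_\ell\le\varrho_\ell$. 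Your ``read it with that equivalence'' does not actually repair the step; you need the extra boundary term on the right, which the paper keeps explicit.

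The genuine gap is in your proof of~\eqref{eq:weakreliability}. You claim that ``a standard stability argument --- comparing the variational inequalities for $u$ and $u_\ell$ and using the lifting $\LL$ --- gives $\enorm{u-u_\ell}\lesssim\norm{g-g_\ell}{H^{1/2}(\Gamma)}$.'' Such a Lipschitz stability estimate is not available here. After lifting, $u$ and $u_\ell$ solve variational inequalities over \emph{different} convex sets $\KK=\set{\widetilde v\in H^1_0(\Omega)}{\widetilde v\ge\chi-\LL g}$ and $\KKell=\set{\widetilde v\in H^1_0(\Omega)}{\widetilde v\ge\chi-\LL g_\ell}$; one cannot simply insert the one solution into the other's inequality and subtract, because neither solution is admissible for the other problem. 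The dependence of the solution of an obstacle problem on the obstacle (equivalently, here, on the Dirichlet datum) is in general only continuous, not Lipschitz, in $H^1$. This is precisely why the paper establishes Proposition~\ref{thm:stability} via Mosco convergence of the sets $\KKell\to\KK$ and Lemma~\ref{lem:mosco}: that result gives the qualitative convergence $u_\ell\to u$ in $H^1(\Omega)$ as soon as $g_\ell\to g$ in $H^{1/2}(\Gamma)$, without any rate. Your argument should therefore not assert a quantitative bound; instead you need to (i) show $g_\ell\to g$ in $H^{1/2}(\Gamma)$ from $\apx_\ell\to0$ (your sketch is fine, or cite~\cite[Lemma~3.3]{agp2009}), and (ii) invoke Proposition~\ref{thm:stability} to conclude $\enorm{u-u_\ell}\to0$. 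Without the Mosco step, the argument is incomplete.
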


\begin{remark}
We stress that the reliability estimate \eqref{eq:rel1} depends on the use of newest vertex bisection in the sense that only finitely many shapes of node patches and edge patches can occur. For red-green-blue refinement \cite[Chapter 4]{verfuerth}, the equivalence of edge and node oscillations is open. The entire analysis, however, applies for a coarser error estimator, where edge oscillations are bounded in terms of the element residuals $\|h_\ell f\|_{L^2(T)}$.
\end{remark}

\subsection{Convergent adaptive mesh-refining algorithm}
\label{section:adaptivity}%
We can now state our version of the adaptive algorithm in the usual form:
\begin{align*}
\boxed{\texttt{Solve}\!} \, \longmapsto \,
\boxed{\texttt{Estimate}\!} \, \longmapsto \,
\boxed{\texttt{Mark}\!} \, \longmapsto \,
\boxed{\texttt{Refine}\!}
\end{align*}
Throughout, we assume that the Galerkin solution $U_\ell\in\SS^1(\TT_\ell)$ is computed exactly. For marking, we use the strategy proposed by D\"orfler~\cite{d1996}.

\begin{algorithm}\label{algorithm}
Fix an adaptivity parameter $0<\theta<1$, let $\TT_\ell$
with $\ell=0$ be the initial triangulation, and fix a reference edge for each element $T\in\TT_0$. For each $\ell=0,1,2,\dots$ do:
\begin{itemize}
\item[(i)] Compute discrete solution $U_\ell\in \AA_\ell$.
\item[(ii)] Compute $\eta_\ell(E)^2$, $\osc_\ell(E)^2$, and $\apx_\ell(E)^2$
for all $E\in\EE_\ell$.
\item[(iii)] Determine (minimal) set $\MM_\ell\subseteq\EE_\ell$
which satisfies
\begin{align}\label{eq:doerfler}
\theta\,\varrho_\ell^2
\le\sum_{E\in\EE_\ell^\Omega\cap\MM_\ell}\big(\eta_\ell(E)^2+\osc_\ell(E)^2\big)
+ \sum_{E \in \EE_{\ell}^\Gamma\cap\MM_\ell} \big(\apx_\ell(E)^2+\osc_\ell(E)^2\big).
\end{align}
\item[(iv)] Mark all edges $E\in\MM_\ell$ and obtain
new mesh $\TT_{\ell+1}$ by newest vertex bisection.
\item[(v)] Increase counter $\ell\mapsto\ell+1$ and iterate.\qed
\end{itemize}
\end{algorithm}

The following theorem states our main convergence results.

\begin{theorem}\label{thm:convergence}
Algorithm~\ref{algorithm} guarantees the existence of
constants $0<\kappa,\gamma<1$ and $\lambda>0$ and a sequence $\alpha_\ell\ge0$ with $\lim\limits_{\ell\to\infty}\alpha_\ell=0$ such that the combined error quantity
\begin{align}\label{eq:combined_error}
 \Delta_\ell := \JJ(U_\ell) - \JJ(u_\ell) + \gamma \varrho_\ell^2 + \lambda\,\apx_\ell^2\ge0
\end{align}
satisfies contraction up to the zero sequence $\alpha_\ell$, i.e.\
\begin{align}\label{eq1:conv}
 \Delta_{\ell+1} \le \kappa\,\Delta_\ell + \alpha_\ell
 \quad\text{for all }\ell\in\N.
\end{align}
In particular, this implies
\begin{align}\label{eq2:conv}
 0 = \lim_{\ell\to\infty}\Delta_\ell
 = \lim_{\ell\to\infty}\varrho_\ell
 = \lim_{\ell\to\infty}\norm{u-U_\ell}{H^1(\Omega)}
\end{align}
as well as convergence of the energies
\begin{align}
 \JJ(u) = \lim_{\ell\to\infty}\JJ(u_\ell)
 = \lim_{\ell\to\infty}\JJ(U_\ell).
\end{align}
\end{theorem}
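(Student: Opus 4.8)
The plan is to establish the contraction estimate \eqref{eq1:conv} by combining three ingredients in the spirit of \textsc{Cascon et al.} (2008), adapted to the obstacle setting and to the lack of nestedness. First, I would prove an \emph{estimator reduction} for $\varrho_\ell$: using the D\"orfler marking \eqref{eq:doerfler}, the decay of mesh-widths under newest vertex bisection (halving of marked edges, area reduction of refined elements), and the inverse-type/scaling properties of the local contributions $\eta_\ell(E)$, $\osc_\ell(E)$, $\apx_\ell(E)$, one obtains
\begin{align*}
 \varrho_{\ell+1}^2 \le q\,\varrho_\ell^2 + C\,\enorm{U_{\ell+1}-U_\ell}^2 + (\text{terms involving }\apx_\ell,\apx_{\ell+1})
\end{align*}
for some $0<q<1$. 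The jump and oscillation terms reduce by a fixed factor on the marked (refined) part and stay bounded on the rest, while the difference $U_{\ell+1}-U_\ell$ appears because the discrete solutions live on different spaces; the $\apx$ contributions need separate care since they are tied to the Dirichlet data and are \emph{not} controlled by $\widetilde\varrho_\ell$ alone.

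Second, I would derive a \emph{quasi-Pythagoras} / energy estimate linking the energy errors $\JJ(U_\ell)-\JJ(u_\ell)$ across two levels. Here the obstacle and the non-nestedness are the crux. Unlike the linear, nested case where $\JJ(U_{\ell+1})-\JJ(U_\ell) = -\tfrac12\enorm{U_{\ell+1}-U_\ell}^2$ exactly, here I expect only an estimate of the form
\begin{align*}
 \JJ(U_{\ell+1}) - \JJ(u_{\ell+1}) \le \JJ(U_\ell) - \JJ(u_\ell) - \tfrac12\enorm{U_{\ell+1}-U_\ell}^2 + \beta_\ell,
\end{align*}
where $\beta_\ell\ge0$ collects (i) the change of the auxiliary continuous solution $u_\ell\to u_{\ell+1}$ caused by $g_\ell\to g_{\ell+1}$, and (ii) the defect from comparing admissible sets $\AA_\ell,\AA_{\ell+1},\AA_\ell^\star$. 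I would bound $\beta_\ell$ by a constant times $\apx_\ell^2$ (or $\apx_\ell^2-\apx_{\ell+1}^2$ plus lower-order data-oscillation terms), using the lifting of $g_\ell-g_{\ell+1}$ via the Scott--Zhang operator $P_\ell$ and the variational inequalities \eqref{eq:obstacle_discrete:inequality}, \eqref{eq:var_in}. The key technical device ``introduced to overcome the lack of nestedness'' is precisely the intermediate solution $u_\ell$: it allows one to compare $U_\ell$ with a continuous object on a fixed admissible set before passing to the next level.

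Third, I would combine the two previous displays with suitable positive weights $\gamma$ (small) and $\lambda$ (chosen after $\gamma$): adding $\gamma$ times the estimator reduction to the energy estimate, the $+C\gamma\,\enorm{U_{\ell+1}-U_\ell}^2$ term is absorbed by $-\tfrac12\enorm{U_{\ell+1}-U_\ell}^2$ provided $\gamma$ is small enough, and the remaining $\apx$-terms on the right are absorbed into $\lambda\,\apx_{\ell+1}^2$ on the left together with the contraction $\apx_{\ell+1}^2\le \apx_\ell^2$ (monotonicity of nodal-interpolation oscillations under refinement, since refined boundary edges strictly reduce the weighted $H^1$-seminorm of $g-g_{\ell}$). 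This yields $\Delta_{\ell+1}\le\kappa\Delta_\ell+\alpha_\ell$ with $0<\kappa<1$ and $\alpha_\ell := C\,\sum_{E\in\EE_\ell^\Gamma\setminus\EE_{\ell+1}}\osc_\ell(E)^2 + (\text{similar data-oscillation remainders})\ge0$. That $\alpha_\ell\to0$ follows because these are tails of a fixed convergent quantity: by Theorem~\ref{thm:reliability} one has $\varrho_\ell\lesssim\Delta_\ell$, the recursion \eqref{eq1:conv} forces $\Delta_\ell$ (hence $\varrho_\ell$, hence all oscillation sums) to be bounded and in fact summable-in-decay, and a standard argument (e.g.\ along the lines of \cite{ckns}) gives $\alpha_\ell\to0$; alternatively $\apx_\ell$ is monotone and bounded, so $\apx_\ell^2-\apx_{\ell+1}^2\to0$, and $\osc_\ell$ over vanishing edges tends to zero by dominated convergence. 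Finally, \eqref{eq2:conv} is immediate: iterating \eqref{eq1:conv} with $\alpha_\ell\to0$ gives $\Delta_\ell\to0$, whence $\varrho_\ell\to0$ and $\JJ(U_\ell)-\JJ(u_\ell)\to0$; weak reliability \eqref{eq:weakreliability} then yields $\norm{u-U_\ell}{H^1(\Omega)}\to0$, and the energy convergence follows from $\tfrac12\enorm{u_\ell-U_\ell}^2\le\JJ(U_\ell)-\JJ(u_\ell)$ together with $\JJ(u_\ell)\to\JJ(u)$ (continuity of $\JJ$ along $u_\ell\to u$, which itself is a by-product of $\apx_\ell\to0$ and the reliability bound $\norm{u-U_\ell}{H^1(\Omega)}\le\enorm{u-u_\ell}+\sqrt2\c{reliable}\varrho_\ell$). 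I expect the main obstacle to be the second step: quantifying $\beta_\ell$ (the non-nestedness defect in the energy estimate) purely in terms of computable Dirichlet oscillations $\apx_\ell$, since one must simultaneously track the moving obstacle-constrained admissible sets and the moving boundary data without any orthogonality at hand.
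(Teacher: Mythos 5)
Your overall architecture (estimator reduction, an energy quasi-contraction across levels, absorption of $\enorm{U_{\ell+1}-U_\ell}^2$ by choosing $\gamma$ small, and the Pythagoras/monotonicity structure of $\apx_\ell$) matches the paper, and you correctly identify the energy step as the crux. However, there are two genuine gaps. First, your ``quasi-Pythagoras'' display has no proof route: $U_{\ell+1}$ and $U_\ell$ lie in admissible sets with \emph{different} boundary data $g_{\ell+1}\neq g_\ell$, so neither is admissible as a competitor for the other and Lemma~\ref{lem:rel_estimates} cannot be applied to produce the term $-\tfrac12\enorm{U_{\ell+1}-U_\ell}^2$. The paper's device is a \emph{discrete} auxiliary problem on the fine mesh with the \emph{old} boundary data, i.e.\ $U_{\ell+1,\ell}\in\AA_{\ell+1,\ell}$ from~\eqref{eq:auxiliary:discrete}, combined with Lemma~\ref{lem:stability_boundary}, which swaps the boundary values of a discrete function at a cost of $\norm{h_\ell^{1/2}(g_{\ell+1}-g_\ell)'}{L^2(\Gamma)}$. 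Both $U_\ell$ and the modified function $U_{\ell+1}^\ell$ then lie in $\AA_{\ell+1,\ell}$, Lemma~\ref{lem:rel_estimates} applies twice, and the boundary-swap cost is exactly what the Pythagoras identity~\eqref{eq:gorthogonal} converts into $\lambda(\apx_\ell^2-\apx_{\ell+1}^2)$. Your intermediate object $u_\ell$ (the continuous auxiliary solution) serves a different purpose in the paper — it makes $\Delta_\ell$ weakly reliable and nonnegative — and does not by itself yield the two-level energy inequality.

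Second, your argument that $\alpha_\ell\to0$ is circular in its primary form: you propose to deduce it from boundedness/decay of $\Delta_\ell$ via the recursion~\eqref{eq1:conv}, but that recursion only yields $\Delta_\ell\to0$ \emph{after} one knows $\alpha_\ell\to0$. Moreover, the perturbations that actually arise are not data-oscillation tails but energy differences, $|\JJ(u_\ell)-\JJ(u_{\ell+1})|$ and $|\JJ(U_{\ell+1})-\JJ(U_{\ell+1,\ell})|+|\JJ(U_{\ell+1}^\ell)-\JJ(U_{\ell+1,\ell})|$, and the latter are only \emph{first} order in $\norm{g_{\ell+1}-g_\ell}{H^{1/2}(\Gamma)}$ (because of cross terms such as $\enorm{U_{\ell+1}}\,\enorm{U_{\ell+1}^\ell-U_{\ell+1}}$ in~\eqref{eq:important}); they cannot be bounded by $C\apx_\ell^2$ nor absorbed into the contraction. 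The paper shows they vanish by an \emph{a~priori} argument that is independent of the success of the adaptive loop: nodal interpolants on nested boundary meshes converge to some limit $g_\infty$ in $H^1(\Gamma)$ (Lemma~\ref{lemma:apriori:boundary}), whence $\norm{g_{\ell+1}-g_\ell}{H^{1/2}(\Gamma)}\to0$, and the Mosco stability of the obstacle problem (Lemma~\ref{lem:mosco}, Propositions~\ref{thm:stability} and~\ref{cor:apriori:obstacle}) gives $\JJ(u_\ell)\to\JJ(u_\infty)$, so consecutive differences vanish. Without this a~priori/Mosco machinery — which you do not invoke — the term $|\JJ(u_\ell)-\JJ(u_{\ell+1})|$ in particular is left uncontrolled, since no quantitative Lipschitz stability of the continuous obstacle problem with respect to the Dirichlet data is available in the paper.
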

%

\section{Proof of Theorem~\ref{thm:reliability} (Weak Reliability of Error Estimator)}
\label{section:reliability}

\subsection{Stability of continuous problem}
\label{section:stability}
For the finite element discretization, we have to replace the continuous Dirichlet data $g\in H^{1/2}(\Gamma)$ by appropriate discrete functions $g_\ell$. To make this procedure feasible, we have to prove that the solution of the obstacle problem~\eqref{eq:obstacle} depends continuously on the given data.

In the following, let $\HH$ be a Hilbert space with scalar product
$\edual\cdot\cdot$ and $\KK, \KKell \subset \HH$ be closed, convex, and non-empty
subsets of $\HH$. We say, that $\KKell$
converges to $\KK$ \emph{in the sense of Mosco} if and only if
\begin{align}\label{eq:mosco1}
 \forall v\in \KK \, \exists v_\ell\in \KKell:
 \quad v_\ell\to v\text{ strongly in }\HH
 \text{ as }\ell\to\infty
\end{align}
and
\begin{align}\label{eq:mosco2}
 \forall v\in\HH \, \forall v_\ell\in \KKell:
 \quad\big(v_\ell \rightharpoonup v \text{ weakly in } \HH
 \text{ as }\ell\to\infty\quad
 \Rightarrow\quad v\in \KK\big).
\end{align}
We can now state an abstract stability result of~{Mosco}:

\begin{lemma}[{\cite[Theorem A]{mosco}}]\label{lem:mosco}
Assume the sets $\KK,\KKell$ satisfy~\eqref{eq:mosco1}--\eqref{eq:mosco2}.
Let $L,L_\ell\in\HH^*$ be functionals and assume $L_\ell\to L$ in
$\HH^*$ as $\ell\to\infty$. Finally, let $u\in \KK$ and $u_\ell\in \KKell$ be
the unique solutions of~\eqref{eq:minimization} with respect to the data
$(\KK,L)$ and $(\KKell,L_\ell)$, respectively. Then there holds strong
convergence
\begin{align}\label{eq:abstract_problem:limit}
 u_\ell \to u\text{ in }\HH\text{ as }\ell\to\infty,
\end{align}
i.e.\ the solution of the variational problem~\eqref{eq:minimization} depends
continuously on the given data.\qed
\end{lemma}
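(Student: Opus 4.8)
The plan is to follow Mosco's classical argument, splitting it into an a priori bound, extraction of a weak limit, identification of that limit via~\eqref{eq:mosco2}, and a final upgrade to strong convergence using~\eqref{eq:mosco1}. First I would recall the variational characterization from Lemma~\ref{lem:minimization}: $u_\ell\in\KKell$ is characterized by $\edual{u_\ell}{v_\ell-u_\ell}\ge\dual{L_\ell}{v_\ell-u_\ell}$ for all $v_\ell\in\KKell$. Fixing one $w\in\KK$ and picking, via~\eqref{eq:mosco1}, a sequence $w_\ell\in\KKell$ with $w_\ell\to w$ strongly, I insert $v_\ell=w_\ell$ into this inequality and rearrange to get $\enorm{u_\ell}^2\le \edual{u_\ell}{w_\ell}-\dual{L_\ell}{w_\ell-u_\ell}$. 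Using $L_\ell\to L$ in $\HH^*$ (so $\norm{L_\ell}{\HH^*}$ is bounded) together with boundedness of $w_\ell$, a Young/Cauchy--Schwarz estimate yields a uniform bound $\enorm{u_\ell}\le C$. Hence a subsequence (not relabeled) satisfies $u_\ell\rightharpoonup u^\star$ weakly in $\HH$, and by~\eqref{eq:mosco2} the limit $u^\star$ lies in $\KK$.

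Next I would show $u^\star$ solves the limit problem, i.e.\ $\edual{u^\star}{v-u^\star}\ge\dual{L}{v-u^\star}$ for all $v\in\KK$. Fix $v\in\KK$ and choose $v_\ell\in\KKell$ with $v_\ell\to v$ strongly. From the discrete inequality, $\edual{u_\ell}{v_\ell}-\dual{L_\ell}{v_\ell-u_\ell}\ge\enorm{u_\ell}^2$. I pass to the limit: the left-hand side converges to $\edual{u^\star}{v}-\dual{L}{v-u^\star}$ because $v_\ell\to v$ strongly, $u_\ell\rightharpoonup u^\star$ weakly, and $L_\ell\to L$ in $\HH^*$ (a standard product-of-weak-and-strong argument handles the $\dual{L_\ell}{u_\ell}$ term); the right-hand side is handled by weak lower semicontinuity of $\enorm\cdot^2$, which gives $\liminf_\ell\enorm{u_\ell}^2\ge\enorm{u^\star}^2$. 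Combining, $\edual{u^\star}{v}-\dual{L}{v-u^\star}\ge\enorm{u^\star}^2$, i.e.\ $\edual{u^\star}{v-u^\star}\ge\dual{L}{v-u^\star}$. Since $v\in\KK$ was arbitrary and the limit problem has a unique solution (Lemma~\ref{lem:minimization}), we conclude $u^\star=u$, and by the usual subsequence argument the \emph{whole} sequence converges weakly, $u_\ell\rightharpoonup u$.

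Finally I would promote weak to strong convergence. Testing the discrete inequality with a recovery sequence $u'_\ell\in\KKell$, $u'_\ell\to u$ strongly (from~\eqref{eq:mosco1} applied to $u\in\KK$), gives $\enorm{u_\ell}^2\le\edual{u_\ell}{u'_\ell}-\dual{L_\ell}{u'_\ell-u_\ell}$, whence $\limsup_\ell\enorm{u_\ell}^2\le\edual{u}{u}-\dual{L}{0}=\enorm{u}^2$; together with weak lower semicontinuity this yields $\enorm{u_\ell}\to\enorm{u}$. In a Hilbert space, weak convergence plus convergence of norms gives strong convergence, so $u_\ell\to u$ in $\HH$, as claimed. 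I expect the main obstacle to be the careful bookkeeping in the limit passage of step two — making sure each term has the right combination of weak/strong convergence (in particular that $\dual{L_\ell}{u_\ell}\to\dual{L}{u}$ uses $L_\ell\to L$ in norm against the merely weakly convergent $u_\ell$) and that the inequality direction is preserved under $\liminf$; the a priori bound and the final strong-convergence upgrade are routine.
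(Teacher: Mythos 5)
The paper states Lemma~\ref{lem:mosco} as a citation to Mosco's Theorem~A with a terminal \qed and provides no proof of its own, so there is nothing in the text to compare against step by step. Your reconstruction is the standard Mosco argument, correctly specialized to the present symmetric coercive setting where $\edual{\cdot}{\cdot}$ is the $\HH$-scalar product: the a~priori bound via a recovery sequence from~\eqref{eq:mosco1}, the extraction of a weakly convergent subsequence whose limit lies in $\KK$ by~\eqref{eq:mosco2}, the identification of that limit via weak lower semicontinuity of $\enorm{\cdot}^2$ together with uniqueness, and the final upgrade to strong convergence through convergence of norms are all sound, including the care taken in the mixed strong/weak limit passages for the $\dual{L_\ell}{\cdot}$ terms.
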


\begin{remark}
We remark that we have stated~\cite[Theorem A]{mosco} only in a simplified form.
In general, the preceding lemma of Mosco includes the approximation of the bilinear form
$\edual\cdot\cdot$ as well, and it also holds for nonlinear variational
inequalities, where the underlying operators $A_\ell,A:\HH\to\HH^*$,
e.g.\ $Av:=\edual{v}{\cdot}$, are
assumed to be Lipschitz continuous and strictly monotone with constants
independent of $\ell$.
\end{remark}

We are now in the position to show that the
solution $u\in\AA$ of the obstacle problem~\eqref{eq:obstacle} continuously depends on the boundary data $g\in H^{1/2}(\Gamma)$.

\begin{proposition}\label{thm:stability}
Recall that $u_\ell\in\AA_\ell^\star$ denotes the solution of the auxiliary problem~\eqref{eq:auxiliary}.
Provided convergence $g_\ell\to g$ in $H^{1/2}(\Gamma)$ of the Dirichlet data, there also holds convergence $u_\ell\to u$ in $H^1(\Omega)$ of the (continuous) solutions as $\ell\to\infty$.
\end{proposition}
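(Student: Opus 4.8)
The plan is to apply Mosco's abstract stability result (Lemma~\ref{lem:mosco}) after transporting the obstacle problems to a fixed Hilbert space with homogeneous boundary conditions. First I would fix a common lifting and use Lemma~\ref{lem:zeroboundary}: writing $\widetilde u := u - \widehat g$ and $\widetilde u_\ell := u_\ell - \widehat g_\ell$ for suitable extensions $\widehat g := \LL g$ and $\widehat g_\ell := \LL g_\ell \in H^1(\Omega)$, the continuous obstacle problem~\eqref{eq:obstacle} and the auxiliary problem~\eqref{eq:auxiliary} become abstract minimization problems on $\HH := H^1_0(\Omega)$ over the convex sets $\KK := \{\widetilde v \in H^1_0(\Omega) : \widetilde v \ge \chi - \widehat g\}$ and $\KKell := \{\widetilde v \in H^1_0(\Omega) : \widetilde v \ge \chi - \widehat g_\ell\}$, with linear functionals $\dual{L}{\cdot} := (f,\cdot) - \edual{\widehat g}{\cdot}$ and $\dual{L_\ell}{\cdot} := (f,\cdot) - \edual{\widehat g_\ell}{\cdot}$. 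The continuity of the lifting operator $\LL$ together with the assumed convergence $g_\ell \to g$ in $H^{1/2}(\Gamma)$ gives $\widehat g_\ell \to \widehat g$ in $H^1(\Omega)$, hence $L_\ell \to L$ in $\HH^*$; and since $u_\ell \to u$ in $H^1(\Omega)$ is equivalent to $\widetilde u_\ell \to \widetilde u$ in $H^1_0(\Omega)$, the claim reduces to verifying the two Mosco conditions~\eqref{eq:mosco1}--\eqref{eq:mosco2}.

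For the \emph{recovery} condition~\eqref{eq:mosco1}, given $\widetilde v \in \KK$, i.e.\ $\widetilde v \ge \chi - \widehat g$, I would set $\widetilde v_\ell := \widetilde v + (\widehat g - \widehat g_\ell)$. Then $\widetilde v_\ell \in H^1_0(\Omega)$ (both liftings agree with the respective boundary data, but note $\widehat g - \widehat g_\ell$ need not vanish on $\Gamma$ in general — here one must be slightly careful and instead choose $\widehat g_\ell$ so that $(\widehat g - \widehat g_\ell)|_\Gamma = g - g_\ell$; since $\widetilde v = 0$ on $\Gamma$, the shifted function $\widetilde v_\ell$ still has the correct trace $g - g_\ell - (g - g_\ell) = 0$ only if we instead correct by subtracting an $H^1$-function with trace $g_\ell - g$, so the cleanest choice is $\widetilde v_\ell := \widetilde v + \widehat g - \widehat g_\ell$ viewed in $\AA_\ell^\star$-coordinates). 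Concretely: in the original variables, set $v_\ell := v + \widehat g_\ell - \widehat g$ whenever $v \in \AA$; then $v_\ell|_\Gamma = g + g_\ell - g = g_\ell$, and $v_\ell = v + (\widehat g_\ell - \widehat g) \ge \chi$ fails pointwise unless $\widehat g_\ell \ge \widehat g$. To fix positivity one uses the standard trick: replace $v_\ell$ by $v_\ell + \|\widehat g - \widehat g_\ell\|_{L^\infty(\Omega)}$ — but $H^1$ does not control $L^\infty$ in 2D, so instead one takes $v_\ell := \max\{v + \widehat g_\ell - \widehat g,\, \chi\}$, which lies in $\AA_\ell^\star$, and estimates $\|v_\ell - v\|_{H^1(\Omega)} \le \|\max\{v + \widehat g_\ell - \widehat g, \chi\} - v\|_{H^1(\Omega)} \le C\,\|\widehat g_\ell - \widehat g\|_{H^1(\Omega)} \to 0$ using the Lipschitz continuity of $t \mapsto \max\{t,0\}$ on $H^1$ and the fact that $v \ge \chi$ makes the truncation active only where $v + \widehat g_\ell - \widehat g < \chi \le v$, i.e.\ where $\widehat g_\ell < \widehat g$. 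This truncation argument is the technical heart of the recovery step.

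For the \emph{weak-closedness} condition~\eqref{eq:mosco2}, suppose $\widetilde v_\ell \in \KKell$ with $\widetilde v_\ell \rightharpoonup \widetilde v$ weakly in $H^1_0(\Omega)$. In original coordinates this reads $v_\ell = \widetilde v_\ell + \widehat g_\ell \rightharpoonup v$ weakly in $H^1(\Omega)$ (using strong convergence $\widehat g_\ell \to \widehat g$), with $v_\ell \ge \chi$ a.e.\ and $v_\ell|_\Gamma = g_\ell$. Weak $H^1$-convergence implies strong $L^2(\Omega)$-convergence (Rellich), so a subsequence converges a.e., whence $v \ge \chi$ a.e.; and weak $H^1$-convergence together with continuity of the trace operator and $g_\ell \to g$ in $H^{1/2}(\Gamma)$ forces $v|_\Gamma = g$. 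Hence $v \in \AA$, i.e.\ $\widetilde v \in \KK$. The only subtlety here is ensuring that no subsequence trick is needed for the conclusion itself, which is fine because the limit $v$ is uniquely identified. With both Mosco conditions established, Lemma~\ref{lem:mosco} yields $\widetilde u_\ell \to \widetilde u$ strongly in $H^1_0(\Omega)$, hence $u_\ell \to u$ in $H^1(\Omega)$, completing the proof. I expect the main obstacle to be the positivity-preserving truncation in the recovery sequence~\eqref{eq:mosco1}, where one must confirm that $\max\{\cdot,\chi\}$ moves the shifted function back into $\AA_\ell^\star$ while only adding an error controlled by $\|\widehat g_\ell - \widehat g\|_{H^1(\Omega)}$.
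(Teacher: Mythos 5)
Your proposal follows essentially the same route as the paper: lift both problems to $\HH=H^1_0(\Omega)$ via Lemma~\ref{lem:zeroboundary}, check $L_\ell\to L$ in $H^{-1}(\Omega)$ from the continuity of $\LL$, verify the two Mosco conditions, and invoke Lemma~\ref{lem:mosco}. Your recovery sequence (shift by a function with trace $g_\ell-g$, then truncate with $\max\{\cdot,\chi\}$) and your weak-closedness argument (Rellich plus an a.e.\ convergent subsequence) are exactly the paper's; the paper merely uses a harmonic extension of $g_\ell-g$ in place of $\LL g_\ell-\LL g$, which is immaterial.

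There is, however, one incorrect justification at precisely the step you identify as the technical heart. The superposition operator $w\mapsto\max\{w,\chi\}=\chi+(w-\chi)^+$ is \emph{not} Lipschitz from $H^1(\Omega)$ to $H^1(\Omega)$: for $w_\eps=w+\eps$ one has $\|\nabla(w_\eps-\chi)^+-\nabla(w-\chi)^+\|_{L^2}^2=\int_{\{-\eps<w-\chi\le0\}}|\nabla(w-\chi)|^2$, which can exceed any fixed multiple of $\eps^2$ when $\nabla w$ is large on that thin set; your pointwise localization to the region $\{\widehat g_\ell<\widehat g\}$ controls the $L^2$ part but not the gradient part. So the quantitative bound $\|v_\ell-v\|_{H^1}\le C\|\widehat g_\ell-\widehat g\|_{H^1}$ you assert is not available. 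Fortunately it is also not needed: since $v=\max\{v,\chi\}$ is a fixed point, mere \emph{continuity} of $\max\{\chi,\cdot\}:H^1(\Omega)\to H^1(\Omega)$ yields $v_\ell\to v$, and this continuity (a genuinely nontrivial fact) is exactly what the paper invokes from Glowinski. Replace ``Lipschitz continuity'' by ``continuity'' and the argument closes. A second, minor point: after taking the max you should verify that the trace is unchanged, i.e.\ $\max\{\overline v_\ell,\chi\}|_\Gamma=g_\ell$, which requires $g_\ell\ge\chi|_\Gamma$ on $\Gamma$; the paper checks this explicitly, and it holds here because $\chi$ is affine and $g_\ell$ is the nodal interpolant of $g\ge\chi|_\Gamma$.
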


\begin{proof}
As in the proof of Proposition~\ref{prop:continuous:solvability},
we define the sets
\begin{align*}
\KK = \newset{\widetilde v \in H^1_0(\Omega)}{\widetilde v \ge \chi - \LL g}
\quad\text{and}\quad
\KKell = \newset{\widetilde v_\ell \in H^1_0(\Omega)}{\widetilde v_\ell \ge \chi - \LL g_\ell}.
\end{align*}
We stress that both sets are convex, closed, and non-empty subsets of $H^1_0(\Omega)$. As usual, let $H^{-1}(\Omega) := H^1_0(\Omega)^*$ denote the dual space of $H^1_0(\Omega)$.
With respect to the abstract notation,
we have $L,L_\ell\in H^{-1}(\Omega)$ with
\begin{align*}
 \dual{L}{\widetilde v} = (f,\widetilde v) - \edual{\LL g}{\widetilde v}
 \quad\text{and}\quad
 \dual{L_\ell}{\widetilde v} = (f,\widetilde v) - \edual{\LL g_\ell}{\widetilde v}.
\end{align*}
Note that this implies
\begin{align*}
 \norm{L-L_\ell}{H^{-1}(\Omega)} \le \norm{\LL g-\LL g_\ell}{H^1(\Omega)}
 \lesssim \norm{g-g_\ell}{H^{1/2}(\Gamma)}\xrightarrow{\ell\to\infty}0.
\end{align*}
Next, we prove that $\KKell\to\KK$ in the sense of Mosco~\eqref{eq:mosco1}--\eqref{eq:mosco2}.

To verify~\eqref{eq:mosco1}, let $\widetilde v\in \KK$ be arbitrary. Define
\begin{align*}
 v := \widetilde v + \LL g \in \AA
\end{align*}
and let $w_\ell \in H^1(\Omega)$ be the unique solution of the (linear) auxiliary problem
\begin{align*}
 \Delta w_\ell = 0 \text{ in } \Omega
 \quad\text{with Dirichlet conditions}\quad
 w_\ell = g_\ell - g \text{ on }\Gamma.
\end{align*}
In the next step, we consider
\begin{align*}
 \overline v_\ell := v + w_\ell \in H^1(\Omega).
\end{align*}
Then, there holds $\overline v_\ell|_\Gamma = g_\ell$ as well as
\begin{align*}
 \norm{v - \overline v_\ell}{H^1(\Omega)}
 = \norm{w_\ell}{H^1(\Omega)}
 \lesssim \norm{g_\ell - g}{H^{1/2}(\Gamma)}
 \xrightarrow{\ell\to\infty}0.
\end{align*}
Since the maximum of $H^1$-functions belongs to $H^1$, we obtain
\begin{align*}
 v_\ell := \max\{\chi,\overline v_\ell\}\in H^1(\Omega).
\end{align*}
By definition, there holds $v_\ell \ge \chi$ almost everywhere in $\Omega$ as well as $v_\ell|_\Gamma = g_\ell$ since $\overline v_\ell|_\Gamma = g_\ell \ge \chi|_\Gamma$ almost everywhere on $\Gamma$. Altogether, this proves $v_\ell\in\AA_\ell$. Moreover, since the function
$\max\{\chi,\cdot\}:H^1(\Omega)\to H^1(\Omega)$ is well-defined and continuous (see e.g.~\cite[Chapter II, Corollary 2.1]{g08}), there holds
\begin{align*}
 v = \max\{\chi,v\} = \lim_{\ell\to\infty} \max\{\chi,\overline v_\ell\} = \lim_{\ell\to\infty} v_\ell
 \text{ in }H^1(\Omega).
\end{align*}
Finally, we observe
\begin{align*}
 \widetilde v_\ell := v_\ell - \LL g_\ell
 \xrightarrow{\ell\to\infty} v - \LL g = \widetilde v
 \text{ in }H^1(\Omega)
\end{align*}
and $\widetilde v_\ell\in \KKell$.

To verify~\eqref{eq:mosco2}, let $v_\ell \in \KKell$ and assume that the weak limit $v_\ell\rightharpoonup v \in H^1_0(\Omega)$ exists as $\ell\to\infty$. According to the Rellich compactness theorem, there is a subsequence $(v_{\ell_k})$ which converges strongly to $v$ in $L^2(\Omega)$.
According to the Weyl theorem, we may thus extract a further subsequence $(v_{\ell_{k_j}})$ which converges to $v$ pointwise almost everywhere in $\Omega$. Moreover, by continuity of the lifting operator, there holds $\LL g_{\ell_{k_j}} \to \LL g$ in $H^1(\Omega)$ as $j\to\infty$. Since this implies $L^2$-convergence and again according to the Weyl theorem, we may choose a subsequence $(\LL g_{\ell_{k_{j_i}}})$ which converges to $\LL g$ pointwise almost everywhere in $\Omega$. Altogether, the estimate
\begin{align*}
 v_{\ell_{k_{j_i}}} \ge \chi-\LL g_{\ell_{k_{j_i}}}
 \quad\text{a.e.\ in }\Omega
\end{align*}
and monotonicity of the pointwise limit imply
\begin{align*}
 v \ge \chi - \LL g
 \quad\text{a.e.\ in }\Omega,
\end{align*}
whence $v\in \KK$.

Now, we may apply Lemma~\ref{lem:mosco} to see that the unique solutions $\widetilde u\in \KK$ and $\widetilde u_\ell\in \KKell$ of the minimization problems~\eqref{eq:minimization} with respect to the data $(\KK,L)$ and $(\KKell,L_\ell)$, respectively, guarantee
\begin{align*}
 \norm{\widetilde u - \widetilde u_\ell}{H^1(\Omega)} \xrightarrow{\ell\to\infty}0.
\end{align*}
With Lemma~\ref{lem:zeroboundary}, there holds $u = \widetilde u + \LL g$ as well as $u_\ell = \widetilde u_\ell + \LL g_\ell$. Altogether, we thus obtain
\begin{align*}
 \norm{u - u_\ell}{H^1(\Omega)}
 \lesssim \norm{\widetilde u - \widetilde u_\ell}{H^1(\Omega)}
 + \norm{g-g_\ell}{H^{1/2}(\Gamma)}
 \xrightarrow{\ell\to\infty}0
\end{align*}
and conclude the proof.
\end{proof}

\subsection{Some elementary preliminaries}
The next result is an elementary lemma for variational inequalities, see e.g.\ \cite[Proposition~2]{pp2010}.

\begin{lemma}\label{lem:rel_estimates}
Suppose that $\AA$ is a convex subset of $H^1(\Omega)$ and $f\in L^2(\Omega)$. Let $U\in\AA$ be the solution of the variational inequality
\begin{align}
\edual{U}{V-U} \ge (f,V-U)\quad\text{for all }V\in\AA.
\end{align}
Then, for all $W \in \AA$, there holds
\begin{align}
 \frac12\,\enorm{U-W}^2 \le \JJ(W) - \JJ(U),
\end{align}
i.e.\ the difference in the $H^1$-seminorm is controlled in terms of the energy.\qed
\end{lemma}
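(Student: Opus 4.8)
The plan is to prove Lemma~\ref{lem:rel_estimates} by a direct computation exploiting the quadratic structure of the energy functional $\JJ$ together with the variational inequality. First I would fix an arbitrary $W\in\AA$ and expand $\JJ(W)-\JJ(U)$ using the definition $\JJ(v)=\frac12\edual vv-(f,v)$. Since $\edual\cdot\cdot$ is bilinear and symmetric, the identity
\begin{align*}
 \frac12\edual WW - \frac12\edual UU = \frac12\edual{W-U}{W-U} + \edual U{W-U}
\end{align*}
holds, so that
\begin{align*}
 \JJ(W)-\JJ(U) = \frac12\enorm{W-U}^2 + \edual U{W-U} - (f,W-U).
\end{align*}

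Next, because $W\in\AA$ and $U$ solves the variational inequality, applying the inequality with $V:=W$ gives $\edual U{W-U}\ge(f,W-U)$, hence $\edual U{W-U}-(f,W-U)\ge0$. Substituting this into the expansion above yields
\begin{align*}
 \JJ(W)-\JJ(U) \ge \frac12\enorm{W-U}^2,
\end{align*}
which is precisely the claimed estimate. Here $\enorm\cdot$ denotes the energy seminorm induced by $\edual\cdot\cdot$, which coincides with the $H^1$-seminorm on $H^1(\Omega)$ and is a genuine norm on $H^1_0(\Omega)$; note that $W-U\in H^1_0(\Omega)$ since both functions share the same trace on $\Gamma$ (they lie in the same affine admissible set $\AA$), so the seminorm notation is consistent with the phrasing of the statement.

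There is essentially no obstacle: the only subtlety worth a remark is that we use $W\in\AA$ as a valid test function in the variational inequality, which is legitimate precisely because $\AA$ is convex — indeed for a convex set the characterizing inequality $\edual U{V-U}\ge(f,V-U)$ is required to hold for \emph{all} $V\in\AA$, so we may simply take $V=W$ directly without forming convex combinations. This makes the argument a two-line consequence of bilinearity and symmetry of $\edual\cdot\cdot$; it applies verbatim to $\AA$, to $\AA_\ell$, and to $\AA_\ell^\star$, which is why the lemma will be invoked repeatedly (e.g.\ for the left inequality in~\eqref{eq:rel1} with $\AA^\star_\ell$ in place of $\AA$).
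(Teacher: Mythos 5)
Your proof is correct and is precisely the standard one-line argument: expand $\JJ(W)-\JJ(U)$ using the quadratic structure and symmetry of $\edual\cdot\cdot$, then discard the nonnegative term $\edual U{W-U}-(f,W-U)\ge0$ that the variational inequality (with $V=W$) supplies. The paper does not spell out a proof of this lemma at all — it simply cites \cite[Proposition~2]{pp2010}, where the same computation is given — so your argument matches the intended one.

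One small inaccuracy in your closing remark: convexity of $\AA$ plays no role in the step you flag. The lemma's hypothesis already states that the variational inequality holds for every $V\in\AA$, so taking $V=W$ is immediate and needs no structural assumption on $\AA$. Convexity would matter only if one wished to \emph{derive} the variational inequality from the minimization characterization of $U$, which is not what this lemma does. The assumption appears in the statement mainly to make the lemma directly applicable to the admissible sets $\AA$, $\AA_\ell$, $\AA_\ell^\star$, and $\AA_{\ell+1,\ell}$, for which the variational-inequality characterization is established elsewhere (Propositions~\ref{prop:continuous:solvability} and~\ref{prop:discrete:solvability}).
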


\subsection{Proof of Theorem~\ref{thm:reliability}}
We start with estimate \eqref{eq:rel1}, where the lower estimate clearly holds due to Lemma~\ref{lem:rel_estimates}. We thus only need to show the upper estimate. To that end,
we define $\sigma_\ell \in H^{-1}(\Omega)$ by
\begin{align*}
\dual{\sigma_\ell}{v} := (f,v) - \edual{u_\ell}{v} \quad \text{ for all } v \in H^1_0(\Omega).
\end{align*}
Since $u_\ell|_\Gamma = U_\ell|_\Gamma$, we may argue as in the proof of \cite[Theorem 1]{cbh2007} to see
\begin{align*}
\enorm{u_\ell - U_\ell}^2 + \dual{\sigma_\ell}{u_\ell - U_\ell} \lesssim \widetilde\varrho_\ell^{\,2},
\end{align*}
where the hidden constant depends only on the shape regularity constant $\sigma(\TT_\ell)$.
A direct calculation finally shows
\begin{align*}
\JJ(U_\ell) - \JJ(u_\ell) &= \frac12\edual{U_\ell}{U_\ell} - (f,U_\ell) - \Big(\frac12\edual{u_\ell}{u_\ell} - (f,u_\ell)\Big)\\
&\quad + \big(\edual{u_\ell}{u_\ell - U_\ell} - (f,u_\ell - U_\ell) + \dual{\sigma_\ell}{u_\ell - U_\ell}\big)\\
&= \frac12\enorm{u_\ell - U_\ell}^2 + \dual{\sigma_\ell}{u_\ell - U_\ell},
\end{align*}
whence $\JJ(U_\ell) - \JJ(u_\ell) \lesssim \widetilde \varrho_\ell^{\,2}$.

Next we prove \eqref{eq:reliability}.
According to the Rellich compactness theorem and the triangle inequality, there holds
\begin{align*}
 \norm{u - U_\ell}{H^1(\Omega)}
 &\simeq \enorm{u - U_\ell} + \norm{g- g_\ell}{H^{1/2}(\Gamma)}\\
 &\le \enorm{u - u_\ell} + \enorm{u_\ell - U_\ell}
  + \norm{g- g_\ell}{H^{1/2}(\Gamma)}.
\end{align*}
From \eqref{eq:rel1}, we infer
\begin{align*}
\enorm{u_\ell - U_\ell}^2 \lesssim \widetilde \varrho_\ell^{\,2} \le \varrho_\ell^2.
\end{align*}
From \cite[Lemma 3.3]{agp2009}, we additionally infer that
\begin{align*}
 \norm{g-g_\ell}{H^{1/2}(\Gamma)}^2
 \lesssim \sum_{E\in\EE_{\ell}^\Gamma}\apx_\ell(E)^2
 \le \varrho_\ell^2,
\end{align*}
and the constant depends only on $\Gamma$ and the local mesh ratio of $\EE_{\ell}^\Gamma=\TT_\ell|_\Gamma$, whence on $\Omega$ and $\sigma(\TT_\ell)$.
The combination of the last three estimates yields~\eqref{eq:reliability}.
Moreover, according to the last estimate, the convergence $\varrho_\ell\to0$ implies $g_\ell\to g$ in $H^{1/2}(\Gamma)$ as $\ell\to\infty$. According to the stability result from Proposition~\ref{thm:stability}, this yields convergence $u_\ell \to u$ in $H^1(\Omega)$, and we also conclude the proof of~\eqref{eq:weakreliability}.
\qed

\section{Proof of Theorem~\ref{thm:convergence} (Convergence of AFEM)}
\label{section:convergence}

The idea of our convergence proof is roughly sketched as follows: D\"orfler marking~\eqref{eq:doerfler} 
yields that $\varrho_\ell$ is contractive up to $\enorm{U_{\ell+1}-U_\ell}$, see Section~\ref{section:reduction}. Since there seems to be no estimate for this term to be available (recall nonconformity $\AA_\ell\not\subseteq\AA_{\ell+1}$), we introduce a discrete auxiliary problem with solution $U_{\ell+1,\ell}$ which allows to control
\begin{align*}
 \enorm{U_{\ell+1}-U_\ell}
 \le \enorm{U_{\ell+1}-U_{\ell+1,\ell}} + \enorm{U_{\ell+1,\ell}-U_\ell}
\end{align*}
in terms of the energy $\JJ(U_\ell)-\JJ(U_{\ell+1,\ell})$, see Section~\ref{section:energy}.
In Section~\ref{section:convergence:proof}, we finally follow the concept of~\cite{ckns} to prove Theorem~\ref{thm:convergence}.

\subsection{Estimator reduction}
\label{section:reduction}%
The following contraction estimate is called \emph{estimator reduction} in~\cite{afp2009,ckns}.

\begin{proposition}\label{proposition:reduction}
Suppose that the set $\MM_\ell\subseteq\EE_\ell$
satisfies~\eqref{eq:doerfler} and that marked edges 
are refined as stated in Section~\ref{section:nvb}. Then, there holds
\begin{align}\label{eq:reduction}
\varrho_{\ell+1}^2
\le q\,\varrho_\ell^2 + \c{reduction}\enorm{U_{\ell+1}-U_\ell}^2
\end{align}
with some contraction constant $q\in(0,1)$ which depends only on
$\theta\in(0,1)$. The constant $\setc{reduction}>0$ additionally depends
only on the initial mesh $\TT_0$.
\end{proposition}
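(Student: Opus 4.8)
The plan is to establish \eqref{eq:reduction} by combining the classical estimator reduction argument (as in \cite{ckns, afp2009}) with a careful treatment of the two new ingredients: the Dirichlet approximation term $\apx_\ell$ and the loss of nestedness of the discrete spaces. First I would split $\varrho_{\ell+1}^2$ according to its definition \eqref{eq:eta} into the edge-jump contributions $\eta_{\ell+1}(E)^2$, the interior/boundary oscillation contributions $\osc_{\ell+1}(E)^2$, and the boundary approximation contributions $\apx_{\ell+1}(E)^2$, and treat each separately using the mesh-width decay recorded in Section~\ref{section:nvb}.

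For the oscillation terms, since the patches only shrink under refinement and $\osc_\ell$ depends on $f$ alone (not on the discrete solution), a marked edge $E\in\MM_\ell$ loses at least a factor $1/2$ in its weight (area or length), so $\osc_{\ell+1}$ restricted to sons of marked edges is bounded by $\tfrac12$ times the parent contribution, while on unrefined edges it is unchanged; the same reasoning applies to $\apx_{\ell+1}(E)^2 = h_E\|(g-g_\ell)'\|_{L^2(E)}^2$, using that $g_{\ell+1}$ interpolates $g$ at least as well as $g_\ell$ on refined boundary edges and that $h_E$ halves there, so that $\sum_E\apx_{\ell+1}(E)^2 \le \sum_E \apx_\ell(E)^2$ with a strict contraction on the marked boundary edges. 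For the jump terms $\eta_{\ell+1}(E)^2 = h_E\|[\partial_n U_{\ell+1}]\|_{L^2(E)}^2$ I would use the Young inequality trick: write $\partial_n U_{\ell+1} = \partial_n(U_{\ell+1}-U_\ell) + \partial_n U_\ell$, so that for any $\delta>0$,
\begin{align*}
 \eta_{\ell+1}(E)^2 \le (1+\delta)\,h_E\|[\partial_n U_\ell]\|_{L^2(E)}^2 + (1+\delta^{-1})\,h_E\|[\partial_n(U_{\ell+1}-U_\ell)]\|_{L^2(E)}^2,
\end{align*}
where the first term on refined edges inherits the factor $1/2$ from $h_E$ and on coarse edges stays put, and the second term is estimated by an inverse estimate on $\SS^1(\TT_{\ell+1})$ (using that $U_{\ell+1}-U_\ell$ is piecewise polynomial on $\TT_{\ell+1}$, which \emph{is} a refinement of $\TT_{\ell+1}$) against $\enorm{U_{\ell+1}-U_\ell}^2$, with constants depending only on $\sigma(\TT_\ell)$ and hence on $\TT_0$. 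Summing and using the D\"orfler property \eqref{eq:doerfler} to absorb a fixed fraction $\theta$ of $\varrho_\ell^2$ into the contracted part, one arrives at $\varrho_{\ell+1}^2 \le (1 - (1-2^{-1})\theta + \delta)\,\varrho_\ell^2 + C(\delta)\enorm{U_{\ell+1}-U_\ell}^2$; choosing $\delta$ small enough that $q := 1-\tfrac{\theta}{2}+\delta \in (0,1)$ gives the claim.

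The subtle point, and the one I expect to require the most care, is the jump term $\eta$ and the nonconformity $\AA_\ell\not\subseteq\AA_{\ell+1}$: one must be sure that $U_{\ell+1}-U_\ell$ (with $U_\ell$ viewed via its values on the finer mesh) really is a legitimate piecewise-affine function on $\TT_{\ell+1}$ so that the inverse estimate applies edge-by-edge, and that the boundary-residual oscillation $\osc_\ell(E)^2=|T|\|f\|_{L^2(T)}^2$ on boundary elements also contracts correctly when $T$ is refined into subelements of area at most $|T|/2$. Since $U_\ell\in\SS^1(\TT_\ell)\subseteq\SS^1(\TT_{\ell+1})$ as a \emph{function} (the nonconformity is only at the level of the admissible \emph{sets}, not the FE spaces), this difference is unambiguously in $\SS^1(\TT_{\ell+1})$, so the inverse estimate is unproblematic; the term $\enorm{U_{\ell+1}-U_\ell}$ is deliberately left uncontrolled here and is the object of Section~\ref{section:energy}. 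The remaining bookkeeping — that all constants depend only on $\theta$ and on $\TT_0$ via the finitely many similarity classes of newest-vertex bisection — follows from the uniform shape regularity recorded in Section~\ref{section:nvb}.
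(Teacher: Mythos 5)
Your proposal follows essentially the same route as the paper: split $\varrho_{\ell+1}^2$ by type, use a Young inequality to separate $\partial_n U_\ell$ from $\partial_n(U_{\ell+1}-U_\ell)$, bound the latter via an inverse estimate on $\SS^1(\TT_{\ell+1})$ (exploiting that the FE \emph{spaces}, not the admissible \emph{sets}, are nested), and apply D\"orfler marking together with the per-edge contraction of the $\eta$-, $\osc$-, and $\apx$-contributions on refined edges. Your observations about the nonconformity being irrelevant at the space level and about the orthogonality of the nodal-interpolant derivative for the $\apx$-reduction are exactly the right ingredients.

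One quantitative slip: you assert a uniform factor $1/2$ for all marked contributions, but the oscillation term contracts only by $1/4$ (the weight is $|\Omega_{\ell,E}|$, and the son-edge patches $\Omega_{\ell+1,E'}$ can overlap and need not tile $\Omega_{\ell,E}$, so the straightforward ``area halves'' argument does not directly give $1/2$; the careful bookkeeping in the cited lemmas of the paper yields $1/4$ of the marked $\osc_\ell$-sum). Using the worst factor, the D\"orfler absorption gives a contraction of $\theta/4$, not $\theta/2$, so $q=(1+\delta)(1-\theta/4)$ rather than your $1-\theta/2+\delta$. This affects only the numerical value of $q$, not the validity of the argument: the conclusion $q\in(0,1)$ for $\delta$ small still holds. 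Also worth noting, though implicit in your argument: new interior edges created strictly inside an old element carry zero jump of $U_\ell$ (since $U_\ell$ is affine there), so they contribute nothing to the refined jump sum — this is what makes the $\eta$-reduction lemma go through cleanly.
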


Since the proof of Proposition~\ref{proposition:reduction} follows along the lines of the proof of~\cite[Proposition~3]{pp2010}, we only sketch it for brevity.

\begin{proof}[Sketch of proof]
First, the Young inequality proves for arbitrary $\delta>0$
\begin{align}\label{eq:reduction:young}
 \begin{split}
 \varrho_{\ell+1}^2
 &\le
 \sum_{E'\in\EE_{\ell+1}}\osc_{\ell+1}(E')^2
 + \sum_{E' \in \EE_{\ell+1}^\Gamma} \apx_{\ell+1}(E')^2
 \\&+
 (1+\delta)\sum_{E'\in\EE_{\ell+1}^\Omega}
 h_{E'}\,\norm{[\partial_nU_{\ell}]}{L^2(E')}^2
 + (1+\delta^{-1})\sum_{E'\in\EE_{\ell+1}^\Omega}
 h_{E'}\,\norm{[\partial_n(U_{\ell+1}-U_{\ell})]}{L^2(E')}^2.
 \end{split}
\end{align}
A scaling argument allows to estimate the last term by
\begin{align}
 \sum_{E'\in\EE_{\ell+1}^\Omega}
 h_{E'}\,\norm{[\partial_n(U_{\ell+1}-U_{\ell})]}{L^2(E')}^2
 \le C\,\enorm{U_{\ell+1}-U_\ell}^2,
\end{align}
and the constant $C>0$ depends only on $\sigma(\TT_\ell)$. Next, one investigates the reduction of the other three terms if the mesh is refined locally:
According to~\cite[Lemma~5]{pp2010}, it holds that
\begin{align}\label{eq:reductionETA}
\sum_{E'\in\EE_{\ell+1}^\Omega}
h_{E'}\,\norm{[\partial_nU_{\ell}]}{L^2(E')}^2
\le \sum_{E\in\EE_\ell^\Omega}
\eta_\ell(E)^2
- \frac12\,\sum_{E\in\EE_\ell^\Omega\cap\MM_\ell}\eta_\ell(E)^2.
\end{align}
Next, \cite[Lemma~6]{pp2010} resp.\ \cite[Lemma 3.9.6]{page2010} yields
\begin{align}\label{eq:reductionOSCE}
\sum_{E'\in\EE_{\ell+1}}\osc_{\ell+1}(E')^2
\le \sum_{E\in\EE_\ell}\osc_\ell(E)^2
- \frac14\,\sum_{E\in\EE_\ell\cap\MM_\ell}\osc_\ell(E)^2.
\end{align}
Moreover, it is part of the proof of~\cite[Theorem~4.2]{agp2009} that
\begin{align}\label{eq:reductionAPXE}
\sum_{E' \in \EE_{\ell+1}^\Gamma} \apx_{\ell+1}(E')^2 \le \sum_{E \in \EE_{\ell}^\Gamma}\apx_\ell(E)^2 - \frac{1}{2}\sum_{E \in \EE_{\ell}^\Gamma\cap \MM_\ell}\apx_\ell(E)^2.
\end{align}
Combining the estimates~\eqref{eq:reduction:young}--\eqref{eq:reductionAPXE}
and using the D\"orfler marking~\eqref{eq:doerfler}, we easily obtain
\begin{align*}
 \varrho_{\ell+1}^2
 \le(1+\delta)(1-\theta/4)\,\varrho_\ell^2
 + (1+\delta^{-1})C\,\enorm{U_{\ell+1}-U_\ell}^2.
\end{align*}
Finally, we may choose $\delta>0$ sufficiently small to guarantee $q:=(1+\delta)(1-\theta/4)<1$ to end up with~\eqref{eq:reduction}.
\end{proof}

\begin{remark}
In the conforming case $\AA_\ell\subseteq\AA_{\ell+1}$, it is easily seen that the sequence $U_\ell$ of discrete solutions tends to some limit $U_\infty$ which is the unique Galerkin solution with respect to the closure of $\bigcup_{\ell=0}^\infty\AA_\ell$, see~\cite[Lemma~3.3.8]{page2010}. Therefore, $\lim_\ell\enorm{U_{\ell+1}-U_\ell}=0$, and elementary calculus predicts that~\eqref{eq:reduction} already implies $\lim_\ell\varrho_\ell=0$, cf.\ e.g.\ \cite[Proposition 1.2]{afp2009}. Then, Theorem~\ref{thm:reliability} would conclude that $\lim_\ell\norm{u-U_\ell}{H^1(\Omega)}=0$, i.e.\ $u=U_\infty$.
---
Now that $\AA_\ell\not\subseteq\AA_{\ell+1}$, we did neither succeed to prove that the a~priori limit $U_\infty$ exists nor the much weaker claim that $\lim_\ell\enorm{U_{\ell+1}-U_\ell}=0$.
\end{remark}

\subsection{Some a~priori convergence results}
\label{section:apriori}%
In the adaptive algorithm, the mesh $\TT_{\ell+1}$ is obtained
by local refinement of elements in $\TT_\ell$. Consequently, the discrete
spaces $\SS^1(\TT_\ell)$ are nested, i.e.,
\begin{align}\label{eq:nestedness}
 \SS^1(\TT_{\ell}) \subseteq \SS^1(\TT_{\ell+1})
 \quad\text{for all }\ell\in\N
\end{align}
and the analogous inclusion also holds for the spaces
$\SS^1(\TT_{\ell}|_\Gamma) \subseteq \SS^1(\TT_{\ell+1}|_\Gamma)$ on the
boundary. The following lemma is part of the proof
of~\cite[Proposition~10]{afp2010}.

\begin{lemma}\label{lemma:apriori:boundary}
The nodal interpolants $g_\ell\in\SS^1(\TT_{\ell}|_\Gamma)$ of some
Dirichlet data $g\in H^1(\Gamma)$ converge to some a~priori limit in
$H^1(\Gamma)$, i.e.\ there holds
\begin{align}\label{eq:apriori:boundary}
 \norm{g_\infty - g_\ell}{H^1(\Gamma)}
 \xrightarrow{\ell\to\infty}0
\end{align}
for a certain element $g_\infty\in H^1(\Gamma)$.\qed
\end{lemma}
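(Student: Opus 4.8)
The plan is to reduce the statement to a fact about a monotone chain of $L^2$-orthogonal projections, using that refinement only subdivides boundary edges — so the boundary meshes $\TT_\ell|_\Gamma$ are nested, as already noted — and the one-dimensional nature of $\Gamma$. The conceptually decisive first step is the observation that \emph{the arclength derivative of a nodal interpolant on a one-dimensional mesh is an $L^2$-orthogonal projection of the derivative}. Indeed, on each boundary edge $E=[a,b]\in\EE_\ell^\Gamma$, the nodal interpolant $g_\ell$ is the affine function with $g_\ell(a)=g(a)$ and $g_\ell(b)=g(b)$ — the point values being meaningful because $H^1(\Gamma)\subset C(\Gamma)$ — so that
\begin{align*}
 g_\ell'|_E \equiv \frac{g(b)-g(a)}{|E|} = \frac{1}{|E|}\int_E g'\,ds .
\end{align*}
Hence $g_\ell' = \Pi_\ell g'$, where $\Pi_\ell\colon L^2(\Gamma)\to L^2(\Gamma)$ denotes the $L^2$-orthogonal projection onto the space of $\TT_\ell|_\Gamma$-piecewise constant functions.

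Since these piecewise-constant spaces are nested along with the meshes, $(\Pi_\ell)_{\ell}$ is a sequence of orthogonal projections onto an increasing chain of closed subspaces of $L^2(\Gamma)$. I would then invoke the elementary Hilbert-space fact that such projections converge strongly pointwise: there is $G\in L^2(\Gamma)$, namely $G=\Pi_\infty g'$ with $\Pi_\infty$ the orthogonal projection onto the $L^2(\Gamma)$-closure of $\bigcup_{\ell}\Pi_\ell L^2(\Gamma)$, such that $g_\ell' = \Pi_\ell g'\to G$ in $L^2(\Gamma)$ as $\ell\to\infty$. To pass back from derivatives to functions, fix a polygon vertex $z_0\in\NN_0\cap\Gamma$; it remains a node of every $\TT_\ell$, so $g_\ell(z_0)=g(z_0)$ for all $\ell$. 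Parametrising $\Gamma$ by arclength starting from $z_0$ and writing $g_\ell(x)=g(z_0)+\int_{z_0}^{x}g_\ell'\,ds$, the $L^2$- (hence $L^1$-) convergence of $g_\ell'$ yields uniform convergence $g_\ell\to g_\infty$ on $\Gamma$, where $g_\infty(x):=g(z_0)+\int_{z_0}^{x}G\,ds$. Then $g_\infty\in H^1(\Gamma)$ with $g_\infty'=G$, and combining the uniform — hence $L^2$ — convergence of $g_\ell$ with the $L^2$-convergence of $g_\ell'$ gives $\norm{g_\infty-g_\ell}{H^1(\Gamma)}\to0$, which is the claim.

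I do not expect a genuine obstacle: once the identity $g_\ell'=\Pi_\ell g'$ is recognised, the remainder is classical Hilbert-space theory together with the fundamental theorem of calculus. The only point requiring (entirely routine) care is the bookkeeping of the arclength parametrisation of the closed polygonal curve $\Gamma$ and the application of the fundamental theorem of calculus from the fixed corner $z_0$. An alternative but essentially equivalent route is to verify directly that $(g_\ell)_{\ell}$ is Cauchy in $H^1(\Gamma)$ by exploiting that $g_\ell$ is the coarse nodal interpolant of $g_{\ell+1}$ and telescoping; the projection viewpoint, however, is cleaner and makes the a~priori limit explicit.
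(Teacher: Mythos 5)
Your proof is correct. The paper delegates this lemma entirely to the reference \cite{afp2010} and offers no proof of its own to compare against; however, the key identity you isolate --- that the arclength derivative $g_\ell'$ of the nodal interpolant is the elementwise $L^2(\Gamma)$-orthogonal projection of $g'$ onto $\TT_\ell|_\Gamma$-piecewise constants --- is exactly the mechanism the paper itself invokes later in the elementwise Pythagoras identity~\eqref{eq:gorthogonal}, and the ensuing monotone-orthogonal-projections argument over nested subspaces (together with anchoring at a fixed corner $z_0\in\NN_0\cap\Gamma$ to pass from $g_\ell'$ back to $g_\ell$) is the standard route to this type of a~priori convergence, so your approach is consistent with the paper's methodology.
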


In the following, we will only use the convergence of $g_\ell$ to $g_\infty$ in $H^{1/2}(\Gamma)$ as well as the uniform boundedness $\sup\limits_{\ell\in\N}\norm{g_\ell}{H^{1/2}(\Gamma)} < \infty$ which is an immediate consequence.

\begin{proposition}\label{cor:apriori:obstacle}
The sequence $u_\ell\in\AA_\ell^\star$ of solutions of the continuous auxiliary problem~\eqref{eq:auxiliary} converges to some a~priori limit $u_\infty\in H^1(\Omega)$, i.e.
\begin{align}
 \norm{u_\infty - u_\ell}{H^1(\Omega)}
 \xrightarrow{\ell\to\infty}0
\end{align}
for some function $u_\infty\in H^1(\Omega)$. In particular, there holds
\begin{align}
 \JJ(u_\infty) = \lim_{\ell\to\infty}\JJ(u_\ell).
\end{align}
\end{proposition}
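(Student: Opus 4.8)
The goal is to show that the sequence $u_\ell\in\AA_\ell^\star$ of solutions to the continuous auxiliary problem~\eqref{eq:auxiliary} converges in $H^1(\Omega)$ to some a~priori limit $u_\infty$. The natural approach is to invoke the Mosco stability result Lemma~\ref{lem:mosco}, exactly as in the proof of Proposition~\ref{thm:stability}, but now with the \emph{fixed} limiting boundary data $g_\infty\in H^1(\Gamma)\subset H^{1/2}(\Gamma)$ from Lemma~\ref{lemma:apriori:boundary} playing the role that $g$ played there. First I would transform everything to homogeneous Dirichlet data: setting $\KKell := \newset{\widetilde v_\ell\in H^1_0(\Omega)}{\widetilde v_\ell\ge\chi-\LL g_\ell}$ and $\KK_\infty := \newset{\widetilde v\in H^1_0(\Omega)}{\widetilde v\ge\chi-\LL g_\infty}$, together with the functionals $\dual{L_\ell}{\widetilde v}=(f,\widetilde v)-\edual{\LL g_\ell}{\widetilde v}$ and $\dual{L_\infty}{\widetilde v}=(f,\widetilde v)-\edual{\LL g_\infty}{\widetilde v}$. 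Continuity of the lifting operator $\LL$ and the convergence $g_\ell\to g_\infty$ in $H^{1/2}(\Gamma)$ (a consequence of Lemma~\ref{lemma:apriori:boundary}) give $\norm{L_\ell-L_\infty}{H^{-1}(\Omega)}\lesssim\norm{g_\ell-g_\infty}{H^{1/2}(\Gamma)}\to0$, which is one of the hypotheses of Lemma~\ref{lem:mosco}.

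The main work is verifying that $\KKell\to\KK_\infty$ in the sense of Mosco, i.e.\ \eqref{eq:mosco1}--\eqref{eq:mosco2}. I would reproduce verbatim the two arguments already carried out in the proof of Proposition~\ref{thm:stability}, substituting $g_\infty$ for $g$: for \eqref{eq:mosco1}, given $\widetilde v\in\KK_\infty$, lift to $v:=\widetilde v+\LL g_\infty$, solve the harmonic problem $\Delta w_\ell=0$ with $w_\ell|_\Gamma=g_\ell-g_\infty$, form $\overline v_\ell:=v+w_\ell$, observe $\norm{w_\ell}{H^1(\Omega)}\lesssim\norm{g_\ell-g_\infty}{H^{1/2}(\Gamma)}\to0$, and finally set $v_\ell:=\max\{\chi,\overline v_\ell\}$; continuity of $\max\{\chi,\cdot\}:H^1(\Omega)\to H^1(\Omega)$ yields $v_\ell\to v$, hence $\widetilde v_\ell:=v_\ell-\LL g_\ell\to\widetilde v$ with $\widetilde v_\ell\in\KKell$. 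For \eqref{eq:mosco2}, the same Rellich/Weyl subsequence argument passes the pointwise constraint $v_\ell\ge\chi-\LL g_\ell$ to the weak limit. All of these sets are convex, closed and non-empty, so Lemma~\ref{lem:mosco} applies and delivers $\norm{\widetilde u_\ell-\widetilde u_\infty}{H^1(\Omega)}\to0$, where $\widetilde u_\infty\in\KK_\infty$ and $\widetilde u_\ell\in\KKell$ are the corresponding minimizers.

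Finally I would undo the transformation: by Lemma~\ref{lem:zeroboundary}, $u_\ell=\widetilde u_\ell+\LL g_\ell$, and defining $u_\infty:=\widetilde u_\infty+\LL g_\infty\in H^1(\Omega)$, the triangle inequality gives
\begin{align*}
 \norm{u_\infty-u_\ell}{H^1(\Omega)}
 \lesssim \norm{\widetilde u_\infty-\widetilde u_\ell}{H^1(\Omega)} + \norm{g_\infty-g_\ell}{H^{1/2}(\Gamma)}
 \xrightarrow{\ell\to\infty}0.
\end{align*}
The energy convergence $\JJ(u_\infty)=\lim_\ell\JJ(u_\ell)$ is then immediate: $\JJ$ is continuous on $H^1(\Omega)$ (being a sum of the continuous quadratic form $\tfrac12\edual{\cdot}{\cdot}$ and the bounded linear functional $(f,\cdot)$ restricted to the affine slice, or more directly one bounds $|\JJ(u_\infty)-\JJ(u_\ell)|$ by a constant times $\norm{u_\infty-u_\ell}{H^1(\Omega)}\,(\norm{u_\infty}{H^1(\Omega)}+\norm{u_\ell}{H^1(\Omega)}+\norm{f}{L^2(\Omega)})$ using the uniform bound $\sup_\ell\norm{u_\ell}{H^1(\Omega)}<\infty$, which itself follows from $\sup_\ell\norm{g_\ell}{H^{1/2}(\Gamma)}<\infty$ and a standard a~priori estimate for the variational inequality). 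The only genuinely delicate point is the Mosco condition, and here the sole new ingredient compared to Proposition~\ref{thm:stability} is that $g_\infty$ is produced abstractly by Lemma~\ref{lemma:apriori:boundary} rather than being the given datum $g$; since the Mosco argument only ever used $g_\ell\to g$ in $H^{1/2}(\Gamma)$ and never any special structure of $g$, the proof transfers without change. Thus the statement can reasonably be asserted as an essentially immediate corollary of Proposition~\ref{thm:stability} applied with the pair $(g_\ell,g_\infty)$ in place of $(g_\ell,g)$.
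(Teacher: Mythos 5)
Your proposal takes essentially the same route as the paper: the paper's proof simply observes that the limiting obstacle problem with boundary data $g_\infty$ is well posed and then invokes the stability result of Proposition~\ref{thm:stability} with $g_\infty$ in place of $g$, which is exactly the Mosco argument you reproduce in detail. The one point the paper makes that you assert without justification is the non-emptiness of the limiting admissible set: you need $g_\infty \ge \chi$ a.e.\ on $\Gamma$ for $\KK_\infty$ to be non-empty, and the paper obtains this by extracting (via Rellich/Weyl) a subsequence of $(g_\ell)$ converging to $g_\infty$ pointwise a.e.\ on $\Gamma$ and passing the constraint $g_\ell \ge \chi|_\Gamma$ to the limit; adding this one observation makes your argument complete.
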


\begin{proof}
Since convergence in $H^{1/2}(\Gamma)$ implies convergence in $L^2(\Gamma)$, the Weyl theorem applies and proves that a subsequence $(g_{\ell_k})$ converges to $g_\infty$ pointwise almost everywhere on $\Gamma$. Therefore, the limit function $g_\infty\in H^{1/2}(\Gamma)$ satisfies $g_\infty\ge\chi$ almost everywhere on $\Gamma$. In particular, the obstacle problem~\eqref{eq:obstacle} with $g=g_\infty$ has also a unique solution $u_\infty\in H^1(\Gamma)$. Applying the stability result from Proposition~\ref{thm:stability}, we obtain convergence of $u_\ell$ to $u_\infty$ in $H^1(\Omega)$ as $\ell\to\infty$.
\end{proof}

\subsection{Discrete energy estimates}
\label{section:energy}%
The main difficulty which we suffered in our analysis is that it is not clear that $\enorm{U_{\ell+1}-U_\ell}$ resp.\ $\JJ(U_{\ell+1})-\JJ(U_\ell)$ tend to zero as $\ell\to\infty$. We circumvent this question by introducing a discrete auxiliary problem: \emph{Find $U_{\ell+1,\ell}\in\AA_{\ell+1,\ell}$ such that}
\begin{align}\label{eq:auxiliary:discrete}
\JJ(U_{\ell+1,\ell}) = \min_{V_{\ell+1}\in \AA_{\ell+1,\ell}}\JJ(V_{\ell+1}).
\end{align}
where the admissible set reads
\begin{align}\label{eq:auxiliary:discrete2}
 \AA_{\ell+1,\ell} := \set{V_{\ell+1} \in \SS^1(\TT_{\ell+1})}{V_{\ell+1} \ge \chi \text{ in }\Omega, \, V_{\ell+1} = g_\ell \text{ on }\Gamma}.
\end{align}
Applying Proposition~\ref{prop:discrete:solvability} to the data $(\AA_{\ell+1,\ell},g_\ell)$ instead of $(\AA_\ell,g_\ell)$, we see that the auxiliary problem~\eqref{eq:auxiliary:discrete} admits a unique solution
$U_{\ell+1,\ell}\in\AA_{\ell+1,\ell}$.

Our first lemma is a key ingredient of the upcoming proofs. It states that one may change the boundary data of a discrete function and control the influence within $\Omega$.

\begin{lemma}\label{lem:stability_boundary}
Let $W_{\ell+1} \in \SS^1(\TT_{\ell+1})$ with $W_{\ell+1}|_\Gamma = g_{\ell+1}$. Define $W_{\ell+1}^\ell \in \SS^1(\TT_{\ell+1})$ by
\begin{align}\label{eq:boundary_changed}
 W_{\ell+1}^\ell(z) = \begin{cases}
 W_{\ell+1}(z) & \text{ for } z\in \NN_{\ell+1}\backslash \Gamma,\\
 g_\ell(z) & \text{ for }z \in \NN_{\ell+1}\cap \Gamma.\end{cases}
\end{align}
Then, \dpr{with the local mesh-width $h_\ell \in L^\infty(\Gamma)$ defined by $h_\ell|_E = h_E$,}
there holds
\begin{align}\label{eq:stability_boundary}
 \norm{W_{\ell+1} - W_{\ell+1}^\ell}{H^1(\Omega)}
 \le \c{boundary2}\,\norm{h_\ell^{1/2}(g_{\ell+1} - g_\ell)'}{L^2(\Gamma)}
 \le \c{boundary}\,\norm{g_{\ell+1} - g_\ell}{H^{1/2}(\Gamma)},
\end{align}
where $\setc{boundary2},\setc{boundary}>0$ depend only on $\TT_0$ and on $\Omega$.
\end{lemma}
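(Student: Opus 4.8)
The function $W_{\ell+1} - W_{\ell+1}^\ell \in \SS^1(\TT_{\ell+1})$ vanishes at every interior node and equals $g_{\ell+1} - g_\ell$ at every boundary node, so it is a discrete ``boundary bump.'' The natural plan is to bound its $H^1(\Omega)$-norm by a localized, element-by-element scaling argument. For each boundary edge $E \in \EE_{\ell+1}^\Gamma$, let $T_E \in \TT_{\ell+1}$ be the unique element with $E \subseteq \partial T_E \cap \Gamma$; since $W_{\ell+1} - W_{\ell+1}^\ell$ is supported on the layer $\bigcup_E T_E$ (as all interior-node values vanish, the function is zero on any element not touching $\Gamma$ — more precisely on any element none of whose vertices lie on $\Gamma$; the finitely many shapes from newest vertex bisection make this bookkeeping harmless), it suffices to control $\norm{W_{\ell+1}-W_{\ell+1}^\ell}{H^1(T_E)}$ for each such $T_E$ and sum.

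\textbf{Key steps.} First I would transform to the reference triangle $\widehat T$ via the affine map associated with $T_E$; by uniform shape regularity ($\sup_\ell \sigma(\TT_\ell) < \infty$) and the finitely many similarity classes under newest vertex bisection, the transformation constants are uniformly bounded in terms of $\TT_0$. On $\widehat T$ the space of affine functions is finite-dimensional, so all norms are equivalent; in particular the $H^1(\widehat T)$-norm of the pulled-back bump is equivalent to the Euclidean norm of its nodal values, which on $\widehat T$ reduce to the two boundary-node values $(g_{\ell+1}-g_\ell)(z_1)$, $(g_{\ell+1}-g_\ell)(z_2)$ at the endpoints of $E$. Transforming back, and using $|T_E| \simeq h_E^2$ together with $h_{T_E} \simeq h_E$, yields
\begin{align*}
\norm{W_{\ell+1}-W_{\ell+1}^\ell}{H^1(T_E)}^2
\lesssim \big(|(g_{\ell+1}-g_\ell)(z_1)|^2 + |(g_{\ell+1}-g_\ell)(z_2)|^2\big)
\lesssim \norm{h_E^{-1/2}(g_{\ell+1}-g_\ell)}{L^2(E)}^2.
\end{align*}
Here $g_{\ell+1}-g_\ell$ restricted to $E$ is again affine (since both are piecewise affine and $E$ is a full edge of $\TT_{\ell+1}|_\Gamma$, and $\TT_\ell|_\Gamma$ is a coarsening), so on $E$ one may pass from pointwise vertex values to the $L^2(E)$-norm and then, since an affine function on $E$ with its arclength derivative satisfies $\norm{h_E^{-1/2} w}{L^2(E)} \lesssim \norm{h_E^{1/2} w'}{L^2(E)}$ once one accounts for the mean — wait, this last inequality fails for constants. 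The correct route is: note $(g_{\ell+1}-g_\ell)(z) = 0$ at every node $z$ of the \emph{coarse} mesh $\TT_\ell|_\Gamma$ (nodal interpolation agrees with $g$ there), hence $g_{\ell+1}-g_\ell$ vanishes at the endpoints of each coarse boundary edge. On each fine edge $E$, at least one endpoint is a coarse node where the difference vanishes, so a one-dimensional Poincaré/Friedrichs inequality on the coarse edge (or directly on $E$) gives $\norm{h_E^{-1/2}(g_{\ell+1}-g_\ell)}{L^2(E)} \lesssim \norm{h_E^{1/2}(g_{\ell+1}-g_\ell)'}{L^2(E)}$, where the $h_E$-powers are tracked by the scaling of the 1D Poincaré constant. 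Summing over $E \in \EE_{\ell+1}^\Gamma$ gives the first inequality of \eqref{eq:stability_boundary} with constant depending only on $\TT_0$ and $\Omega$. The second inequality is then \cite[Lemma~3.3]{agp2009} applied to the pair $(g_{\ell+1},g_\ell)$: the weighted $L^2$-norm of the jump derivative dominates the $H^{1/2}(\Gamma)$-norm of $g_{\ell+1}-g_\ell$, with constant depending only on $\Gamma$ and the local mesh ratio, hence on $\Omega$ and $\sigma(\TT_\ell)$.

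\textbf{Main obstacle.} The delicate point is the bookkeeping of which coarse node lies on which fine boundary edge and ensuring the 1D Poincaré inequality is applied on a configuration that is scaling-invariant — this is exactly where the finiteness of shapes from newest vertex bisection (only $4\cdot\#\TT_0$ similarity classes, finitely many boundary-patch shapes) is essential, so that the hidden constants genuinely depend only on $\TT_0$ and $\Omega$ and not on $\ell$. Secondarily, one must check that $W_{\ell+1}-W_{\ell+1}^\ell$ really is supported in the one-element-thick boundary layer; this follows because its nodal values vanish at every node not on $\Gamma$, so on any element with no vertex on $\Gamma$ it is identically zero, and the remaining overlap between patches of neighboring boundary edges is bounded uniformly, again by shape regularity.
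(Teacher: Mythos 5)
Your proof of the first inequality in \eqref{eq:stability_boundary} is essentially the paper's own argument: localize to the boundary layer of elements with a new vertex on $\Gamma$, use norm equivalence on the reference triangle to reduce $\norm{W_{\ell+1}-W_{\ell+1}^\ell}{H^1(T)}$ to the nodal values, observe that these vanish at interior nodes and at coarse boundary nodes (nodal interpolation gives $g_\ell(z)=g_{\ell+1}(z)=g(z)$ for $z\in\NN_\ell\cap\Gamma$), and control the value at a new midpoint node by integrating $(g_{\ell+1}-g_\ell)'$ from a coarse endpoint. Your 1D Friedrichs inequality on the fine edge with one vanishing endpoint is just the fundamental theorem of calculus plus Cauchy--Schwarz, which is exactly the step in the paper; the bounded-overlap bookkeeping and the reliance on finitely many shapes under newest vertex bisection are also the same. (Your self-correction about the Poincar\'e inequality failing for constants resolves correctly, since you then invoke the vanishing at coarse nodes; note also that each fine boundary edge does have at least one coarse endpoint, because a marked edge is bisected exactly once per refinement step.)

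There is, however, a genuine error in your justification of the \emph{second} inequality. You cite \cite[Lemma~3.3]{agp2009} and describe it as ``the weighted $L^2$-norm of the jump derivative dominates the $H^{1/2}(\Gamma)$-norm'', i.e.\ $\norm{v}{H^{1/2}(\Gamma)}\lesssim\norm{h_\ell^{1/2}v'}{L^2(\Gamma)}$. That is the direction in which this lemma is used elsewhere in the paper (to prove $\norm{g-g_\ell}{H^{1/2}(\Gamma)}\lesssim\apx_\ell$), and it is the \emph{opposite} of what \eqref{eq:stability_boundary} asserts, namely $\norm{h_\ell^{1/2}(g_{\ell+1}-g_\ell)'}{L^2(\Gamma)}\le\c{boundary}\norm{g_{\ell+1}-g_\ell}{H^{1/2}(\Gamma)}$. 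The estimate you need is an \emph{inverse} estimate for the piecewise affine function $g_{\ell+1}-g_\ell\in\SS^1(\TT_{\ell+1}|_\Gamma)$, trading half a power of $h$ for half an order of differentiability; such an estimate is false for general $H^{1/2}$-functions and requires the discreteness of $g_{\ell+1}-g_\ell$ together with control of the local mesh ratio. The paper takes it from the local inverse estimate \cite[Proposition~3.1]{ccdpr:hypsing}. So the chain of inequalities you propose does not close as written; replacing the citation and the direction of the final step by the appropriate inverse estimate repairs the argument.
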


\def\Tref{T_{\rm ref}}
\def\zref{z^{\rm ref}}
\begin{proof}
By definition, we have $W_{\ell+1}^\ell \in \SS^1(\TT_{\ell+1})$ with $W_{\ell+1}^\ell|_\Gamma = g_\ell$. Moreover, there holds
$W_{\ell+1}^\ell |_T = W_{\ell+1}|_T$ for all $T \in \TT_{\ell+1}\setminus\TT_{\ell+1}^\Gamma$.
Let $T =\conv\{z_1,z_2,z_3\} \in \TT_{\ell+1}^\Gamma$ and $\Phi_T : T_{\text{ref}} \rightarrow T$ be the affine diffeomorphism with reference element $T_{\text{ref}} = \conv\{(0,0),(0,1),(1,0)\}$.

The transformation formula and norm equivalence on $\PP^1(\Tref)$ prove
\begin{align*}
 \norm{W_{\ell+1}-W_{\ell+1}^\ell}{L^2(T)}
 &=|T|^{1/2} \, \norm{(W_{\ell+1}-W_{\ell+1}^\ell)\circ \Phi_{T}}{L^2(\Tref)}\\
 &\lesssim |T|^{1/2} \sum_{j=1}^3 |(W_{\ell+1}-W_{\ell+1}^\ell)\circ \Phi_{T}(\zref_j)|\\
 &\leq|\Omega|^{1/2} \sum_{j=1}^3 |(W_{\ell+1}-W_{\ell+1}^\ell)(z_j)|
\end{align*}
as well as
\begin{align*}
 \norm{\nabla(W_{\ell+1}-W_{\ell+1}^\ell)}{L^2(T)}
 &\lesssim \sigma(\TT_{\ell}) \, \norm{\nabla\big((W_{\ell+1}-W_{\ell+1}^\ell)\circ \Phi_{T}\big)}{L^2(\Tref)}\\
 &\lesssim \sigma(\TT_{\ell}) \sum_{j=1}^3 |(W_{\ell+1}-W_{\ell+1}^\ell)(z_j)|.
\end{align*}
Now, we consider the nodes $z\in \{z_1,z_2,z_3\}$ of the triangle $T$:
For $z\in\NN_\ell\cap\Gamma$ holds $g_\ell(z) = g_{\ell+1}(z)$ by definition of the nodal interpolants. Therefore,
\begin{align*}
 (W_{\ell+1}-W_{\ell+1}^\ell)(z) = 0
 \quad\text{if }z\in(\NN_{\ell+1}\cap\Omega) \cup (\NN_\ell\cap\Gamma).
\end{align*}
Thus, it only remains to consider nodes $z\in(\NN_{\ell+1}\backslash\NN_\ell)\cap\Gamma$. In this case, $z$ is the midpoint of a refined edge $\widehat E_z$ of the unique father $\widehat T\in\TT_\ell$ with $T \subseteq \widehat T$, and there even holds $\widehat E_z\subset \Gamma$. Let $\widehat z\in\NN_\ell$ be an arbitrary endpoint of $\widehat E_z$. Then,
\begin{align*}
 (W_{\ell+1}-W_{\ell+1}^\ell)(\widehat z)
 = (g_{\ell+1}-g_\ell)(\widehat z) = 0,
\end{align*}
and the fundamental theorem of calculus, now applied for the arclength derivative, yields
\begin{align*}
 |(W_{\ell+1}- W_{\ell+1}^\ell)(z)|
 &\le \Big|\int_{\widehat z}^z (g_{\ell+1}-g_{\ell})^\prime \,d \Gamma\Big|
 \le |z-\widehat z|^{1/2}\,\norm{(g_\ell-g_{\ell+1})^\prime}{L^2(\widehat E_z)}.
\end{align*}
Combining our observations, we have now shown
\begin{align*}
 \norm{W_{\ell+1}- W_{\ell+1}^\ell}{H^1(T)}
 \lesssim \sum_{{\widehat E\in\EE_{\ell}^\Gamma}\atop{\widehat E\subset\partial\widehat T}} h_{\widehat E}^{1/2}\norm{(g_\ell-g_{\ell+1})^\prime}{L^2(\widehat E)}
 \simeq
 \Big(
 \sum_{{\widehat E\in\EE_{\ell}^\Gamma}\atop{\widehat E\subset\partial\widehat T}} h_{\widehat E}\norm{(g_\ell-g_{\ell+1})^\prime}{L^2(\widehat E)}^2
 \Big)^{1/2}.
\end{align*}
Finally, note that each edge $\widehat E\in\EE_{\ell}^\Gamma$ belongs only to one element $\widehat T\in\TT_{\ell}$. Thus, $\widehat E$ can at most be selected by all (but at most four) sons $T\in\TT_{\ell+1}$ of $\widehat T$, cf.\ Figure~\ref{fig:nvb}. This observation yields
\begin{align*}
 \norm{W_{\ell+1}- W_{\ell+1}^\ell}{H^1(\Omega)}^2
 = \sum_{T\in\TT_{\ell+1}^\Gamma}
 \norm{W_{\ell+1}- W_{\ell+1}^\ell}{H^1(T)}^2
 \lesssim \sum_{\widehat E\in\EE_{\ell}^\Gamma} h_{\widehat E}^{1/2}\norm{(g_\ell-g_{\ell+1})^\prime}{L^2(\widehat E)}^2.
\end{align*}
With the local mesh-width function $h_\ell\in L^\infty(\Gamma)$, 
this estimate reads
\begin{align*}
 \norm{W_{\ell+1}- W_{\ell+1}^\ell}{H^1(\Omega)}
 \lesssim \norm{h_\ell^{1/2}(g_\ell-g_{\ell+1})^\prime}{L^2(\Gamma)}
 \lesssim \norm{g_\ell-g_{\ell+1}}{H^{1/2}(\Gamma)},
\end{align*}
where we have used a local inverse estimate from~\cite[Proposition~3.1]{ccdpr:hypsing} in the last step.
We stress that the constant depends only on the local mesh-ratio of $\TT_{\ell+1}|_\Gamma$, whence on $\sigma(\TT_{\ell+1})$.
This concludes the proof.
\end{proof}

With the notation of Lemma~\ref{lem:stability_boundary}, we can
now formulate an additional convergence result.

\begin{proposition}\label{prop:Jell_bounded2}
The sequence of discrete solutions $U_{\ell+1,\ell}\in\AA_{\ell+1,\ell}$ satisfies
\begin{align}\label{eq:auxiliary:limit}
 |\JJ(U_{\ell+1,\ell})-\JJ(U_{\ell+1})|
 + |\JJ(U_{\ell+1,\ell})-\JJ(U_{\ell+1}^\ell)|
 \le \c{aux}\,\norm{g_{\ell+1}-g_\ell}{H^{1/2}(\Gamma)}
 \xrightarrow{\ell\to\infty}0
\end{align}
with some constant $\setc{aux}>0$ which only depends on $\TT_0$.
\end{proposition}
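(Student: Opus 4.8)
The goal is to bound $|\JJ(U_{\ell+1,\ell})-\JJ(U_{\ell+1})|$ and $|\JJ(U_{\ell+1,\ell})-\JJ(U_{\ell+1}^\ell)|$ by a constant times $\norm{g_{\ell+1}-g_\ell}{H^{1/2}(\Gamma)}$, which then tends to $0$ by Lemma~\ref{lemma:apriori:boundary} (since $g_\ell\to g_\infty$ in $H^1(\Gamma)\supset H^{1/2}(\Gamma)$ implies $\norm{g_{\ell+1}-g_\ell}{H^{1/2}(\Gamma)}\to0$). The idea is to exploit that $U_{\ell+1}$ and $U_{\ell+1,\ell}$ live in the \emph{same} discrete space $\SS^1(\TT_{\ell+1})$ but differ only in their boundary values ($g_{\ell+1}$ vs.\ $g_\ell$), and Lemma~\ref{lem:stability_boundary} precisely quantifies the cost of swapping boundary data.

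First I would treat the second term $|\JJ(U_{\ell+1,\ell})-\JJ(U_{\ell+1}^\ell)|$. Observe that $U_{\ell+1}^\ell$ (the boundary-modified version of $U_{\ell+1}$) lies in $\AA_{\ell+1,\ell}$ once one checks the obstacle constraint is preserved: $U_{\ell+1}\ge\chi$ at all interior nodes and $U_{\ell+1}^\ell=g_\ell\ge\chi|_\Gamma$ at boundary nodes, so the piecewise affine interpolant stays $\ge\chi$ (using that $\chi$ is affine). Hence $\JJ(U_{\ell+1,\ell})\le\JJ(U_{\ell+1}^\ell)$ by minimality of $U_{\ell+1,\ell}$. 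For the reverse direction I would use the stability of $\JJ$ under $H^1$-perturbations: since $\JJ(v)=\frac12\enorm{v}^2-(f,v)$ and one has a uniform $H^1$-bound on the solutions (from Proposition~\ref{cor:apriori:obstacle} and the analogous discrete statement, or simply since $\JJ(U_{\ell+1,\ell})\le\JJ(U_{\ell+1}^\ell)$ bounds the energy of the minimizer), the local Lipschitz estimate
\begin{align*}
 |\JJ(w)-\JJ(\widetilde w)|\le \tfrac12\big(\enorm{w}+\enorm{\widetilde w}\big)\enorm{w-\widetilde w}+\norm{f}{L^2(\Omega)}\norm{w-\widetilde w}{L^2(\Omega)}\lesssim\norm{w-\widetilde w}{H^1(\Omega)}
\end{align*}
applies with $w=U_{\ell+1}^\ell$, $\widetilde w=U_{\ell+1,\ell}$, provided I first establish a uniform bound $\sup_\ell\norm{U_{\ell+1,\ell}}{H^1(\Omega)}<\infty$. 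The latter follows by testing the discrete variational inequality~\eqref{eq:obstacle_discrete:inequality} for $U_{\ell+1,\ell}$ against a fixed admissible competitor (e.g.\ $P_{\ell+1}$ applied to a suitable $H^1$-extension of $g_\ell$ lifted above $\chi$), together with the uniform boundedness $\sup_\ell\norm{g_\ell}{H^{1/2}(\Gamma)}<\infty$.

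For the first term $|\JJ(U_{\ell+1,\ell})-\JJ(U_{\ell+1})|$, the symmetric argument works: $(U_{\ell+1,\ell})^{\ell+1}$, the boundary-modification of $U_{\ell+1,\ell}$ back to data $g_{\ell+1}$, lies in $\AA_{\ell+1}$, so $\JJ(U_{\ell+1})\le\JJ((U_{\ell+1,\ell})^{\ell+1})$; conversely $U_{\ell+1}^\ell\in\AA_{\ell+1,\ell}$ gives $\JJ(U_{\ell+1,\ell})\le\JJ(U_{\ell+1}^\ell)$. Applying the Lipschitz estimate to each pair, with $\norm{U_{\ell+1}-U_{\ell+1}^\ell}{H^1(\Omega)}$ and $\norm{U_{\ell+1,\ell}-(U_{\ell+1,\ell})^{\ell+1}}{H^1(\Omega)}$ both controlled by $\c{boundary}\norm{g_{\ell+1}-g_\ell}{H^{1/2}(\Gamma)}$ via Lemma~\ref{lem:stability_boundary}, and combining with the already-established uniform $H^1$-bounds, yields both desired estimates after setting $\c{aux}$ appropriately (absorbing $\c{boundary}$, the $L^2$-Sobolev constant, $\norm{f}{L^2(\Omega)}$, and the uniform energy bounds, all of which depend only on $\TT_0$, $\Omega$, and the fixed data $f$). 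The main obstacle I anticipate is the bookkeeping around the uniform $H^1$-boundedness of the auxiliary solutions $U_{\ell+1,\ell}$ and the interplay of the obstacle constraint with the boundary modification — one must be careful that modifying boundary nodes downward (to $g_\ell$, which could be below $g_{\ell+1}$ at an edge midpoint) does not violate $\ge\chi$, which is exactly where affineness of $\chi$ and $g_\ell(z)\ge\chi(z)$ at nodes are used.
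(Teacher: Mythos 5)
Your overall strategy matches the paper's: exploit minimality of $U_{\ell+1,\ell}$ and $U_{\ell+1}$ over their respective admissible sets, swap boundary data nodewise with Lemma~\ref{lem:stability_boundary}, and invoke local Lipschitz continuity of $\JJ$ together with uniform $H^1$-bounds on the discrete solutions. The admissibility check that $U_{\ell+1}^\ell\in\AA_{\ell+1,\ell}$ (resp.\ $(U_{\ell+1,\ell})^{\ell+1}\in\AA_{\ell+1}$), relying on $g_\ell(z)\ge\chi(z)$ at nodes and on affineness of $\chi$, is a point the paper leaves implicit, so it is good that you flag it. Your treatment of the first term $|\JJ(U_{\ell+1,\ell})-\JJ(U_{\ell+1})|$ is correct and is essentially the paper's argument in~\eqref{eq:important}: compare against the boundary-modified competitors in the opposite admissible set and estimate the resulting energy gap via Lemma~\ref{lem:stability_boundary}. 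The paper obtains $\sup_\ell\norm{U_{\ell+1,\ell}}{H^1(\Omega)}<\infty$ by first bounding $\JJ(U_{\ell+1,\ell})$ through~\eqref{eq:important} and then invoking the coercivity Lemma~\ref{lem:Jkoerziv}, rather than by testing the variational inequality directly; both routes work.

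However, your explicit argument for the second term $|\JJ(U_{\ell+1,\ell})-\JJ(U_{\ell+1}^\ell)|$ has a genuine gap. You propose to apply the Lipschitz estimate with $w=U_{\ell+1}^\ell$ and $\widetilde w=U_{\ell+1,\ell}$, but this requires a bound on $\norm{U_{\ell+1}^\ell-U_{\ell+1,\ell}}{H^1(\Omega)}$, and no such bound is available a~priori: Lemma~\ref{lem:stability_boundary} controls the difference between a \emph{fixed} function and its boundary-modified version, not the difference between two separate minimizers sharing the same boundary data $g_\ell$. In fact, by Lemma~\ref{lem:rel_estimates} applied in $\AA_{\ell+1,\ell}$ one has $\tfrac12\enorm{U_{\ell+1}^\ell-U_{\ell+1,\ell}}^2\le\JJ(U_{\ell+1}^\ell)-\JJ(U_{\ell+1,\ell})$, so controlling this $H^1$-difference is equivalent to controlling the very energy gap you want to bound; the direct Lipschitz route is circular (at best it would yield the weaker bound $\lesssim\norm{g_{\ell+1}-g_\ell}{H^{1/2}(\Gamma)}^{1/2}$, not the linear bound in the statement). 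The fix is what the paper does: since $\JJ(U_{\ell+1,\ell})\le\JJ(U_{\ell+1}^\ell)$, write $\JJ(U_{\ell+1}^\ell)-\JJ(U_{\ell+1,\ell})=\big[\JJ(U_{\ell+1}^\ell)-\JJ(U_{\ell+1})\big]+\big[\JJ(U_{\ell+1})-\JJ(U_{\ell+1,\ell})\big]$; the first bracket is bounded by Lipschitz continuity of $\JJ$ together with Lemma~\ref{lem:stability_boundary}, and the second is exactly the first-term estimate that you have already established. Your own first-term argument already contains all the ingredients, so you should reorder: prove the first term first, then obtain the second term by the above decomposition, rather than attempting a direct comparison between $U_{\ell+1}^\ell$ and $U_{\ell+1,\ell}$. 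One minor typo: the inclusion on $\Gamma$ is $H^1(\Gamma)\subset H^{1/2}(\Gamma)$, not the other way around.
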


To prove Proposition~\ref{prop:Jell_bounded2}, we start with the observation that the energy functional $\JJ(\cdot)$ is coercive.

\begin{lemma}\label{lem:Jkoerziv}
For each sequence $w_\ell\in H^1(\Omega)$ with changing boundary data $w_\ell|_\Gamma=g_\ell$, bounded energy implies uniform boundedness in $H^1(\Omega)$, i.e.\
\begin{align}\label{eq:coercive}
 \sup\limits_{\ell\in\N}\JJ(w_\ell)<\infty
 \quad\Longrightarrow\quad
 \sup\limits_{\ell\in\N}\norm{w_\ell}{H^1(\Omega)}<\infty.
\end{align}
\end{lemma}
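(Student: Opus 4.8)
The plan is to establish the coercivity estimate \eqref{eq:coercive} directly by bounding $\norm{w_\ell}{H^1(\Omega)}$ from below in terms of $\JJ(w_\ell)$, using the fact that the boundary data $g_\ell$ are uniformly bounded in $H^{1/2}(\Gamma)$ (which follows from Lemma~\ref{lemma:apriori:boundary}, as noted in the remark after it). The key obstruction to a one-line argument is that $\JJ(v)=\frac12\enorm{v}^2-(f,v)$ only controls the $H^1$-\emph{seminorm} $\enorm{v}=\norm{\nabla v}{L^2(\Omega)}$, not the full norm, and moreover $w_\ell$ does not lie in $H^1_0(\Omega)$, so the Friedrichs/Poincar\'e inequality is not immediately available.

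First I would split off the boundary contribution. Let $\widehat g_\ell := \LL g_\ell \in H^1(\Omega)$ be the lifting of the discrete Dirichlet data, so that $w_\ell - \widehat g_\ell \in H^1_0(\Omega)$, and by continuity of $\LL$ and uniform boundedness of $g_\ell$ in $H^{1/2}(\Gamma)$ one has $\norm{\widehat g_\ell}{H^1(\Omega)} \le \c{lifting}\,\norm{g_\ell}{H^{1/2}(\Gamma)} \le C$ with $C$ independent of $\ell$. By the Friedrichs inequality on $H^1_0(\Omega)$ there is a constant $c_F>0$ (depending only on $\Omega$) with $\norm{v}{H^1(\Omega)} \le c_F\,\enorm{v}$ for all $v\in H^1_0(\Omega)$. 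Applying this to $v = w_\ell - \widehat g_\ell$ and using the triangle inequality for $\enorm\cdot$, we obtain
\begin{align*}
 \norm{w_\ell}{H^1(\Omega)}
 \le \norm{w_\ell - \widehat g_\ell}{H^1(\Omega)} + \norm{\widehat g_\ell}{H^1(\Omega)}
 \le c_F\,\big(\enorm{w_\ell} + \enorm{\widehat g_\ell}\big) + C
 \le c_F\,\enorm{w_\ell} + C',
\end{align*}
with $C'$ independent of $\ell$. Hence it suffices to bound $\enorm{w_\ell}$.

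Next I would bound $\enorm{w_\ell}$ using the definition of $\JJ$ and the Cauchy--Schwarz inequality on the linear term: $(f,w_\ell) \le \norm{f}{L^2(\Omega)}\,\norm{w_\ell}{L^2(\Omega)} \le \norm{f}{L^2(\Omega)}\,\norm{w_\ell}{H^1(\Omega)} \le \norm{f}{L^2(\Omega)}\,(c_F\,\enorm{w_\ell}+C')$ by the previous step. Writing $s_\ell := \enorm{w_\ell}$ and $M := \sup_\ell \JJ(w_\ell) < \infty$, we get
\begin{align*}
 \tfrac12\, s_\ell^2 = \JJ(w_\ell) + (f,w_\ell) \le M + \norm{f}{L^2(\Omega)}\,(c_F\, s_\ell + C'),
\end{align*}
i.e.\ $s_\ell^2 \le a\, s_\ell + b$ with $a := 2c_F\norm{f}{L^2(\Omega)}$ and $b := 2M + 2C'\norm{f}{L^2(\Omega)}$, both independent of $\ell$. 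This quadratic inequality forces $s_\ell \le \tfrac12\big(a + \sqrt{a^2+4b}\big) =: S < \infty$ uniformly in $\ell$. Plugging $\enorm{w_\ell} = s_\ell \le S$ back into the estimate from the second paragraph yields $\norm{w_\ell}{H^1(\Omega)} \le c_F S + C' < \infty$ uniformly in $\ell$, which is \eqref{eq:coercive}.

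The main obstacle here is simply the loss of the Friedrichs inequality for functions with inhomogeneous boundary values; the remedy is the standard lifting trick combined with the uniform $H^{1/2}(\Gamma)$-bound on $g_\ell$ coming from Lemma~\ref{lemma:apriori:boundary}. Once that is in place, everything reduces to an elementary absorption argument for the quadratic inequality $s_\ell^2 \le a s_\ell + b$, which is routine. One should note that all constants ($\c{lifting}$, $c_F$, $\norm{f}{L^2(\Omega)}$, and the uniform bound on $\norm{g_\ell}{H^{1/2}(\Gamma)}$) are independent of $\ell$, so the final bound is genuinely uniform; this is the only point requiring a small amount of care.
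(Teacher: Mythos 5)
Your proof is correct and follows essentially the same route as the paper's: lift the boundary data via $\LL$, use the Friedrichs inequality on $w_\ell - \LL g_\ell \in H^1_0(\Omega)$ together with the uniform $H^{1/2}(\Gamma)$-bound on $g_\ell$ from Lemma~\ref{lemma:apriori:boundary} to get $\norm{w_\ell}{H^1(\Omega)}\lesssim\enorm{w_\ell}+\text{const}$, and then absorb the linear term in $\JJ$ via the resulting quadratic inequality. The only cosmetic difference is that you make the final quadratic absorption argument explicit, where the paper states the conclusion as an ``immediate consequence''.
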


\begin{proof}
Note that triangle inequalities and the Friedrichs inequality yield
\begin{align*}
\begin{split}
\norm{w_\ell}{L^2(\Omega)}&\le \norm{w_\ell - \LL g_\ell}{L^2(\Omega)} + \norm{\LL g_\ell}{L^2(\Omega)}\\
& \lesssim \norm{\nabla(w_\ell - \LL g_\ell)}{L^2(\Omega)} + \norm{\LL g_\ell}{L^2(\Omega)}\\
& \lesssim \norm{\nabla w_\ell}{L^2(\Omega)} + \norm{\LL g_\ell}{H^1(\Omega)}\\
& \lesssim \norm{\nabla w_\ell}{L^2(\Omega)} + M
\end{split}
\end{align*}
with $M=\sup_{\ell}\norm{g_\ell}{H^{1/2}(\Gamma)}<\infty$,
where we have finally used continuity of $\LL$ and a~priori convergence
of $g_\ell$, cf.\ Lemma~\ref{lemma:apriori:boundary}. Consequently, we
obtain
\begin{align*}
 \norm{w_\ell}{H^1(\Omega)} \lesssim \norm{\nabla w_\ell}{L^2(\Omega)} + M
\end{align*}
and therefore
\begin{align*}
\JJ(w_\ell) &= \frac{1}{2}\norm{\nabla w_\ell}{L^2(\Omega)}^2 - (f,w_\ell)
\gtrsim (\norm{w_\ell}{H^1(\Omega)} - M)^2 - \norm{f}{L^2(\Omega)}\norm{w_\ell}{H^1(\Omega)}.
\end{align*}
The implication~\eqref{eq:coercive} is an immediate consequence.
\end{proof}

\begin{lemma}\label{lemma:boundedness}
The sequence of discrete solutions $U_\ell\in\AA_\ell$ from Algorithm~\ref{algorithm} satisfies
\begin{align}\label{eq:Jell_bounded}
 \sup_{\ell\in\N} |\JJ(U_\ell)| < \infty
 \quad\text{as well as}\quad
 \sup_{\ell\in\N} \norm{U_\ell}{H^1(\Omega)} < \infty.
\end{align}
\end{lemma}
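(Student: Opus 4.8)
The plan is to exhibit, for each $\ell$, a single competitor $W_\ell\in\AA_\ell$ whose energy is bounded uniformly in $\ell$; since $U_\ell$ is the energy minimizer over $\AA_\ell$, this gives $\JJ(U_\ell)\le\JJ(W_\ell)\le C$ from above, and then Lemma~\ref{lem:Jkoerziv} (coercivity) immediately yields the lower bound on $\JJ(U_\ell)$ together with the $H^1$-bound. So the only real work is producing a good competitor.

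First I would construct $W_\ell$ by lifting the discrete boundary data. Recall from Section~\ref{section:discrete:solvability} that the Scott--Zhang operator $P_\ell$ provides a discrete lifting $\widehat g_\ell:=P_\ell(\LL g)\in\SS^1(\TT_\ell)$ with $\widehat g_\ell|_\Gamma = g_\ell$ (since $(\LL g)|_\Gamma=g$ and $P_\ell$ preserves discrete boundary values once we note $g_\ell$ is precisely the nodal interpolant; alternatively use the nodal interpolant of $\LL g$, which is well defined on $\TT_\ell|_\Gamma$ after composition with a suitable smoothing — but $P_\ell$ is cleanest). By \eqref{eq:sz:cont} and continuity of $\LL$,
\begin{align*}
 \norm{\widehat g_\ell}{H^1(\Omega)}
 \le \c{sz:stetig}\,\norm{\LL g}{H^1(\Omega)}
 \le \c{sz:stetig}\,\c{lifting}\,\norm{g}{H^{1/2}(\Gamma)}
 =: M_1,
\end{align*}
which is independent of $\ell$. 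However $\widehat g_\ell$ need not satisfy the obstacle constraint $\widehat g_\ell\ge0$ in $\Omega$. To fix this, set $W_\ell := \max\{0,\widehat g_\ell\}$ taken nodally, i.e.\ $W_\ell\in\SS^1(\TT_\ell)$ with $W_\ell(z)=\max\{0,\widehat g_\ell(z)\}$ for all $z\in\NN_\ell$. Then $W_\ell\ge0$ in $\Omega$ (a piecewise affine function with nonnegative nodal values is nonnegative), and on $\Gamma$ we have $W_\ell(z)=\max\{0,g_\ell(z)\}=g_\ell(z)$ because $0\le g$ implies $0\le g(z)=g_\ell(z)$ at boundary nodes (cf.\ the Remark after \eqref{eq:obstacle_discrete}). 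Hence $W_\ell\in\AA_\ell$.

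The remaining point is that the nodal truncation is $H^1$-stable with an $\ell$-independent constant. This follows from a standard scaling argument on the reference element, exactly as in the proof of Lemma~\ref{lem:stability_boundary}: on each $T\in\TT_\ell$ one bounds $\norm{W_\ell}{H^1(T)}$ by the nodal values $|W_\ell(z_j)|\le|\widehat g_\ell(z_j)|$ via norm equivalence on $\PP^1(\Tref)$ and the transformation formula, then sums and uses norm equivalence back to $\norm{\widehat g_\ell}{H^1(\Omega)}$; the constant depends only on $\sup_\ell\sigma(\TT_\ell)<\infty$. This yields $\norm{W_\ell}{H^1(\Omega)}\le C M_1$, hence
\begin{align*}
 \JJ(U_\ell)\le\JJ(W_\ell)=\tfrac12\enorm{W_\ell}^2-(f,W_\ell)
 \le C\big(M_1^2+\norm{f}{L^2(\Omega)}M_1\big)=:M_2
\end{align*}
for all $\ell$, so $\sup_\ell\JJ(U_\ell)<\infty$. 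Since each $U_\ell\in\AA_\ell$ has boundary data $U_\ell|_\Gamma=g_\ell$, Lemma~\ref{lem:Jkoerziv} applies with $w_\ell=U_\ell$ and gives $\sup_\ell\norm{U_\ell}{H^1(\Omega)}<\infty$, which in turn (via the same identity $\JJ(U_\ell)=\tfrac12\enorm{U_\ell}^2-(f,U_\ell)$ and Cauchy--Schwarz) forces $\inf_\ell\JJ(U_\ell)>-\infty$ as well. The main obstacle is purely the construction of an admissible competitor that respects both the obstacle constraint and the (mesh-dependent) discrete boundary data simultaneously while keeping its energy bounded — the nodal truncation $\max\{0,\widehat g_\ell\}$ of a uniformly bounded discrete lifting does exactly this, and its $H^1$-stability is the one estimate that needs the uniform shape-regularity of the meshes.
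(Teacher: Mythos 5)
There is a genuine gap in the construction of the competitor $\widehat g_\ell$. You set $\widehat g_\ell := P_\ell(\LL g)$ and claim $\widehat g_\ell|_\Gamma = g_\ell$, invoking the boundary-preservation property of the Scott--Zhang operator. But that property, as stated in~\eqref{eq:sz:cont} and the line after it, only applies to functions $v\in H^1(\Omega)$ whose trace is \emph{already} in $\SS^1(\TT_\ell|_\Gamma)$. Here $(\LL g)|_\Gamma = g \notin \SS^1(\TT_\ell|_\Gamma)$ in general, so $P_\ell(\LL g)|_\Gamma$ is the Scott--Zhang boundary average of $g$, which is a discrete function but is \emph{not} the nodal interpolant $g_\ell$. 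Consequently $W_\ell := \max\{0,\widehat g_\ell\}$ need not lie in $\AA_\ell$, and the competitor argument collapses. Your parenthetical fallback --- nodal interpolation of $\LL g$ --- also does not work: $\LL g$ is merely an $H^1(\Omega)$ function, and in two dimensions $H^1(\Omega)\not\subset C(\overline\Omega)$, so interior nodal values are not defined.

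The paper avoids this by lifting the \emph{discrete} boundary data: it uses $P_\ell\LL g_\ell$, where $(\LL g_\ell)|_\Gamma = g_\ell\in\SS^1(\TT_\ell|_\Gamma)$ by construction, so the preservation property applies verbatim and yields $(P_\ell\LL g_\ell)|_\Gamma = g_\ell$. The price is that the resulting $H^1$-bound reads
\begin{align*}
\norm{P_\ell\LL g_\ell}{H^1(\Omega)}\le \c{sz:stetig}\,\c{lifting}\,\norm{g_\ell}{H^{1/2}(\Gamma)},
\end{align*}
which is no longer automatically $\ell$-independent; you must additionally invoke the uniform bound $\sup_\ell\norm{g_\ell}{H^{1/2}(\Gamma)}<\infty$, which the paper obtains from the a~priori convergence $g_\ell\to g_\infty$ in Lemma~\ref{lemma:apriori:boundary}. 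With this replacement of $\LL g$ by $\LL g_\ell$ and the extra citation, the rest of your proof --- nodal truncation, $H^1$-stability of the nodewise $\max$ by a scaling argument, $\JJ(U_\ell)\le\JJ(W_\ell)$, then coercivity via Lemma~\ref{lem:Jkoerziv} --- matches the paper's argument and is correct.
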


\def\maxell{\mbox{$\max_\ell$}}
\begin{proof}
According to Lemma~\ref{lem:Jkoerziv} with $w_\ell=U_\ell$, it is sufficient to prove boundedness of the energy. To that end, we define
\begin{align*}
 V_\ell := \maxell\{P_\ell\LL g_\ell - \chi, 0\} + \chi \in \AA_\ell,
\end{align*}
where $\max_\ell$ denotes the nodewise $\max$-function, i.e. $\maxell\{v,w\}\in\SS^1(\TT_\ell)$ is defined by
\begin{align*}
\maxell\{v,w\}(z) := \max\{v(z),w(z)\}\quad \text{ for all } z\in\NN_\ell.
\end{align*}
We consider the function $W_\ell := \maxell\{P_\ell\LL g_\ell - \chi, 0\}$.
Note that, by definition, $|W_\ell(z)| \le |(P_\ell\LL g_\ell - \chi)(z)|$ for all $z\in\NN_\ell$.
According to a standard scaling argument and $P_\ell\LL g_\ell - \chi\in\SS^1(\TT_\ell)$, we infer, for all $T\in\TT_\ell$,
\begin{align*}
 \norm{W_\ell}{L^2(T)} \lesssim \norm{P_\ell\LL g_\ell - \chi}{L^2(T)}
 \quad\text{as well as}\quad
 \norm{\nabla W_\ell}{L^2(T)} \lesssim \norm{\nabla (P_\ell\LL g_\ell - \chi)}{L^2(T)}
\end{align*}
due to the nodewise estimate. From this, we obtain a constant $C>0$ with
\begin{align*}
 \norm{V_\ell}{H^1(\Omega)}
 \lesssim \norm{P_\ell\LL g_\ell - \chi}{H^1(\Omega)}
 + \norm{\chi}{H^1(\Omega)}
 \lesssim \norm{g_\ell}{H^{1/2}(\Gamma)} + \norm{\chi}{H^1(\Omega)}
 \le C<\infty
\end{align*}
since $\sup_\ell\norm{g_\ell}{H^{1/2}(\Gamma)}<\infty$ and the operator norm of $P_\ell$ depends only on $\sigma(\TT_\ell)$.
Therefore,
\begin{align*}
 \JJ(U_\ell) \le \JJ(V_\ell)
  = \frac12\,\norm{\nabla V_\ell}{L^2(\Omega)}^2 - (f,V_\ell)
  \lesssim C\,(C + \norm{f}{L^2(\Omega)}) < \infty
\end{align*}
yields $\sup\limits_{\ell\in\N}\JJ(U_\ell)<\infty$ and
concludes the proof.
\end{proof}

\begin{proof}[Proof of Proposition~\ref{prop:Jell_bounded2}]
According to Lemma~\ref{lemma:boundedness}, we have
$M := \sup_\ell\norm{U_\ell}{H^1(\Omega)}<\infty$.
With the aid of Lemma~\ref{lem:stability_boundary}, we obtain \begin{align}\label{eq:important}
\begin{split}
 \hspace*{-2mm}
 \JJ(&U_{\ell+1,\ell}) \le \JJ(U_{\ell+1}^\ell)
  = \frac{1}{2}\,\enorm{U_{\ell+1}^\ell}^2 - (f, U_{\ell+1}^\ell)
 \\&
 \le \frac{1}{2}\,\big( \enorm{U_{\ell+1}}
 + \enorm{U_{\ell+1}^\ell-U_{\ell+1}} \big)^2
 - (f,U_{\ell+1})
 + \norm{f}{L^2(\Omega)} \norm{U_{\ell+1}^\ell - U_{\ell+1}}{L^2(\Omega)}
 \\&
 = \JJ(U_{\ell+1})
 + \frac{1}{2}\enorm{U_{\ell+1}^\ell \!-\! U_{\ell+1}}^2
 + \enorm{U_{\ell+1}}\,\enorm{U_{\ell+1}^\ell \!-\! U_{\ell+1}}
 + \norm{f}{L^2(\Omega)}\norm{U_{\ell + 1}^\ell \!-\! U_{\ell+1}}{L^2(\Omega)} \\\quad&
 \le \JJ(U_{\ell+1})
 + \frac{1}{2}\,\c{boundary}^2\,\norm{g_\ell - g_{\ell+1}}{H^{1/2}(\Gamma)}^2
 + \c{boundary}\,(M + \norm{f}{L^2(\Omega)})\,
 \norm{g_{\ell+1} - g_\ell}{H^{1/2}(\Gamma)},
\end{split}
\end{align}
whence
\begin{align*}
 \JJ(U_{\ell+1,\ell}) - \JJ(U_{\ell+1})
 \lesssim \norm{g_\ell - g_{\ell+1}}{H^{1/2}(\Gamma)}^2 +
 \norm{g_{\ell+1} - g_\ell}{H^{1/2}(\Gamma)}.
\end{align*}
Estimate~\eqref{eq:important} now reveals $\sup_\ell\JJ(U_{\ell+1,\ell})<\infty$, whence $\sup_\ell \norm{U_{\ell+1,\ell}}{H^1(\Omega)} < \infty$
according to Lemma~\ref{lem:Jkoerziv} with $w_\ell=U_{\ell+1,\ell}$. Arguing as in~\eqref{eq:important}
with the ansatz $\JJ(U_{\ell+1})\le\JJ(U_{\ell+1,\ell}^{\ell+1})$ shows
\begin{align*}
 \JJ(U_{\ell+1}) - \JJ(U_{\ell+1,\ell})
 \lesssim \norm{g_\ell - g_{\ell+1}}{H^{1/2}(\Gamma)}^2 +
 \norm{g_{\ell+1} - g_\ell}{H^{1/2}(\Gamma)}.
\end{align*}
The combination of the last two inequalities yields
\begin{align*}
 |\JJ(U_{\ell+1}) - \JJ(U_{\ell+1,\ell})|
 \lesssim \norm{g_\ell - g_{\ell+1}}{H^{1/2}(\Gamma)}^2 +
 \norm{g_{\ell+1} - g_\ell}{H^{1/2}(\Gamma)}
 \xrightarrow{\ell\to\infty}0.
\end{align*}
Finally, we recall $\JJ(U_{\ell+1,\ell})\le\JJ(U_{\ell+1}^\ell)$.
Then, the above calculation also yields
\begin{align*}
 |\JJ(U_{\ell+1}^\ell)-\JJ(U_{\ell+1,\ell})|
 &= \JJ(U_{\ell+1}^\ell)-\JJ(U_{\ell+1,\ell})\\
 &= \big[\JJ(U_{\ell+1}^\ell)-\JJ(U_{\ell+1})\big]
 + \big[\JJ(U_{\ell+1})-\JJ(U_{\ell+1,\ell})\big]\\
 &\lesssim\norm{g_\ell - g_{\ell+1}}{H^{1/2}(\Gamma)}^2 +
 \norm{g_{\ell+1} - g_\ell}{H^{1/2}(\Gamma)}
 \xrightarrow{\ell\to\infty}0.
\end{align*}
This
concludes the proof.
\end{proof}

We are now ready to prove that the sequence $U_\ell\in\AA_\ell$ of discrete solutions generated by Algorithm~\ref{algorithm} indeed converges towards
the exact solution $u \in \AA$.

\subsection{Proof of Theorem~\ref{thm:convergence}}
\label{section:convergence:proof}%
With the help of Lemma~\ref{lem:stability_boundary} and Lemma~\ref{lem:rel_estimates} applied twice for $U_\ell\in\AA_{\ell+1,\ell}$, and $U_{\ell+1}^\ell \in \AA_{\ell+1,\ell}$,
we obtain
\begin{align*}
\frac12\,\enorm{U_{\ell+1}&-U_\ell}^2
\le \enorm{U_{\ell+1}-U_{\ell+1,\ell}}^2
+ \enorm{U_{\ell+1,\ell}-U_\ell}^2\\
&\le 2\,\enorm{U_{\ell+1} - U_{\ell+1}^\ell}^2
+ 2\,\enorm{U_{\ell+1}^\ell - U_{\ell+1,\ell}}^2
+ 2\,\big[\JJ(U_\ell)-\JJ(U_{\ell+1,\ell})\big]\\
&\le 2\, \c{boundary2}^2\norm{h_\ell^{1/2}(g_{\ell+1} - g_\ell)'}{L^2(\Gamma)}^2
+ 4\,\big[\JJ(U_{\ell+1}^\ell) - \JJ(U_{\ell+1,\ell})\big]\\
&\qquad+ 2\,\big[\JJ(U_\ell)-\JJ(U_{\ell+1,\ell})\big].
\end{align*}
Recall that $\Delta_{\ell+1}=\big[\JJ(U_{\ell+1})-\JJ(u_{\ell+1})\big]+\gamma\varrho_{\ell+1}^2 + \lambda\, \apx_{\ell+1}^2$.
We now use the last estimate to see
\begin{align*}
 \big[\JJ(U_{\ell+1})-\JJ(u_{\ell+1})\big]
 &= \big[\JJ(U_{\ell})-\JJ(u_{\ell})\big]
 + \big[\JJ(u_{\ell})-\JJ(u_{\ell+1})\big]\\
 &\qquad+ \big[\JJ(U_{\ell+1})-\JJ(U_{\ell+1,\ell})\big]
 +\big[\JJ(U_{\ell+1,\ell})-\JJ(U_{\ell})\big]\\
 &\le \big[\JJ(U_{\ell})-\JJ(u_{\ell})\big]
 + \big[\JJ(u_{\ell})-\JJ(u_{\ell+1})\big]\\
 &\qquad+ \big[\JJ(U_{\ell+1})-\JJ(U_{\ell+1,\ell})\big]
 + \c{boundary2}^2\norm{h_\ell^{1/2}(g_{\ell+1}-g_\ell)'}{L^2(\Gamma)}^2\\
 &\qquad- \frac14\,\enorm{U_{\ell+1}-U_\ell}^2
 +2\,\big[\JJ(U_{\ell+1}^\ell) - \JJ(U_{\ell+1,\ell})\big].
\end{align*}
Note that according to Proposition~\ref{cor:apriori:obstacle} and Proposition~\ref{prop:Jell_bounded2}
it holds that
\begin{align*}
 \alpha_\ell &:= \big|\JJ(u_{\ell})-\JJ(u_{\ell+1})\big|
 + \big|\JJ(U_{\ell+1})-\JJ(U_{\ell+1,\ell})\big|
 + 2\,\big|\JJ(U_{\ell+1}^\ell) - \JJ(U_{\ell+1,\ell})\big|
\end{align*}
tends to zero as $\ell \rightarrow \infty$.
Next, we recall that on the 1D manifold $\Gamma$, the derivative $g_\ell'$ of the nodal interpolant is the elementwise best approximation of the derivative $g'$ by piecewise constants, i.e.\
\begin{align}\label{eq:gorthogonal}
\norm{h_\ell^{1/2}(g - g_{\ell+1})'}{L^2(\Gamma)}^2 + \norm{h_\ell^{1/2}(g_{\ell+1} - g_\ell)'}{L^2(\Gamma)}^2 = \norm{h_\ell^{1/2}(g-g_\ell)'}{L^2(\Gamma)}^2
\end{align}
according to the elementwise Pythagoras theorem.
So far and with $\lambda = \c{boundary2}^2$, we thus have derived
\begin{align*}
 \Delta_{\ell+1}
 &\le \big[\JJ(U_{\ell})-\JJ(u_{\ell})\big]
 + \gamma\varrho_{\ell+1}^2
 +\lambda \apx_{\ell+1}^2
 + \lambda\,\norm{h_\ell^{1/2}(g_{\ell+1} - g_\ell)'}{L^2(\Gamma)}^2\\
 &\qquad- \frac14\,\enorm{U_{\ell+1}-U_\ell}^2
 + \alpha_\ell,\\
 &\le \big[\JJ(U_{\ell})-\JJ(u_{\ell})\big]
 + \gamma\varrho_{\ell+1}^2
 + \lambda \apx_\ell^2
  - \frac14\,\enorm{U_{\ell+1}-U_\ell}^2
 + \alpha_\ell,
\end{align*}
where we have used the Pythagoras theorem in the second step.

Next, the estimator reduction of Proposition~\ref{proposition:reduction} applies and provides constants $0<q<1$ and $\c{reduction}>0$ with
\begin{align*}
 \Delta_{\ell+1}
 &\le \big[\JJ(U_{\ell})-\JJ(u_{\ell})\big]
 + \gamma q\varrho_\ell^2
 + \lambda \apx_\ell^2
 + \Big(\gamma\c{reduction} - \frac14\Big)\,\enorm{U_{\ell+1}-U_\ell}^2
 + \alpha_\ell\\
 &\le \big[\JJ(U_{\ell})-\JJ(u_{\ell})\big]
 + \gamma q\varrho_\ell^2
 + \lambda \apx_\ell^2
 + \alpha_\ell
\end{align*}
provided the constant $0<\gamma<1$ is chosen sufficiently small.

Next, from Theorem~\ref{thm:reliability}, we infer
 $\c{reliable}^{-2}\,\big[\JJ(U_\ell)-\JJ(u_\ell)\big]
 \le \varrho_\ell^2.$

\label{dpr:bch2007}%
We plug this estimate into the last one and use the fact that $\apx_\ell^2 \le \varrho_\ell^2$ to see
\begin{align*}
 \Delta_{\ell+1}
 &\le (1-\gamma\eps\c{reliable}^{-2})\,\big[\JJ(U_{\ell})-\JJ(u_{\ell})\big]
 + \gamma(q+2\,\eps)\,\varrho_\ell^2
 +(1-\varepsilon \gamma)\lambda \apx_\ell^2
 + \alpha_\ell\\
 &\le \kappa\,\Delta_\ell + \alpha_\ell,
\end{align*}
with $\kappa:=\max\{1-\gamma\eps\c{reliable}^{-2}\,,\,q+2\,\eps, 1-\gamma\eps\}$. For $0<2\,\eps<1-q$, we obtain $0<\kappa<1$ and conclude the proof of the contraction property~\eqref{eq1:conv}.

The application of Lemma~\ref{lem:rel_estimates} for $\AA_\ell\subseteq\AA_\ell^\star$ yields $\JJ(u_\ell)\le\JJ(U_\ell)$ and hence $\Delta_\ell\ge0$. Together with $\alpha_\ell\ge0$ and $\alpha_\ell\to0$, elementary calculus yields $\Delta_\ell\to0$ as $\ell\to\infty$, cf.~\cite[Proposition~1.2]{afp2009}. From $0\le\varrho_\ell\lesssim\Delta_\ell$, we obtain $\lim_\ell\varrho_\ell=0$, and the weak reliability of $\varrho_\ell$ stated in Theorem~\ref{thm:reliability} concludes the proof of~\eqref{eq2:conv}.\qed

\begin{figure}[h]
\psfrag{numerical solution after 10 iterations}{}
\centering
\includegraphics[width=120mm]{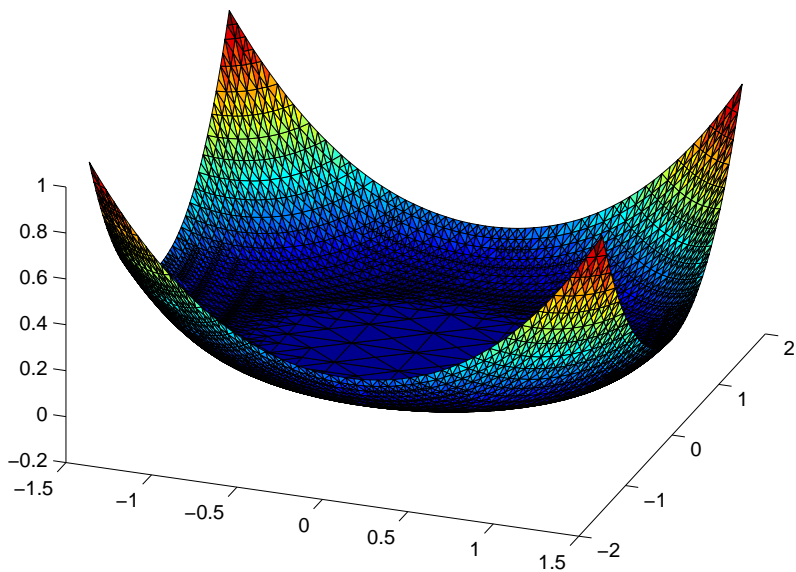}
\caption{Galerkin solution $U_{6}$ on adaptively generated mesh $\TT_{6}$ with
$N= 4.159 $ elements for $\theta=0.8$.}
\label{fig:sol}
\end{figure}

\begin{figure}[h]
\psfrag{errunif}{$\sqrt{\varepsilon_\ell}$ (unif.)}
\psfrag{etaunif}{$\varrho_\ell$ (unif.)}
\psfrag{apxunif}{$\apx_\ell$ (unif.)}
\psfrag{err08}{$\sqrt{\varepsilon_\ell}$ (adap.)}
\psfrag{eta08}{$\varrho_\ell$ (adap.)}
\psfrag{apx08}{$\apx_\ell$ (adap.)}
\psfrag{512}{$\mathcal{O}(N^{-1/2})$}
\psfrag{12}{\hspace{-4ex}$\mathcal{O}(N^{-3/4})$}
\centering
\includegraphics[width=120mm]{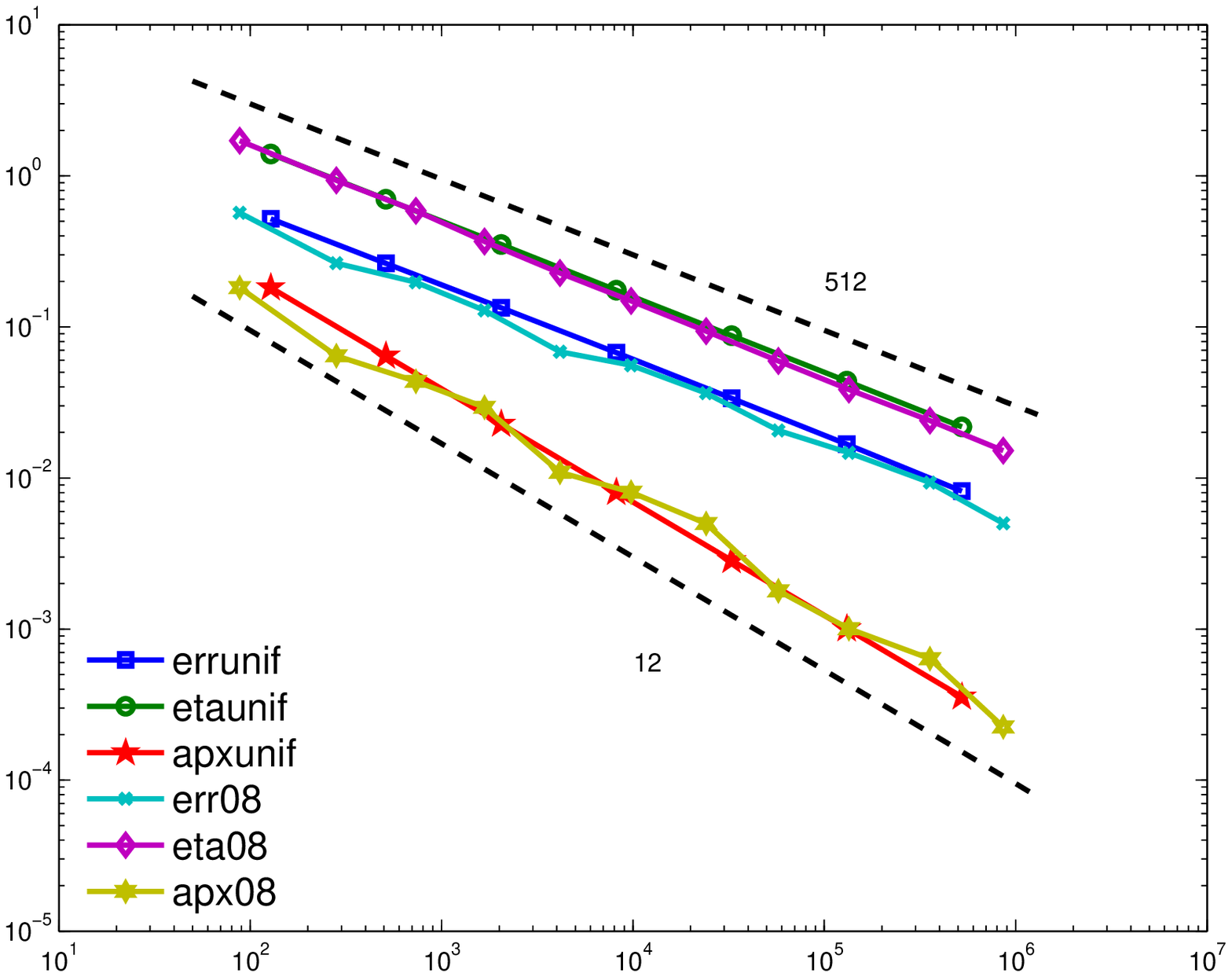}
\caption{Numerical results for uniform and adaptive mesh-refinement with $\theta = 0.8$, where $\sqrt{\eps_\ell}, \varrho_\ell,$ and $\apx_\ell$ are plotted over the number of elements $N = \#\TT_\ell$.}
\label{fig:conv1}
\end{figure}

\begin{figure}[h]
\psfrag{err04}{$\theta = 0.4$}
\psfrag{err06}{$\theta = 0.6$}
\psfrag{err08}{$\theta = 0.8$}
\psfrag{errunif}{uniform}
\psfrag{apx08}{$\apx_\ell$ (adap.)}
\psfrag{512}{$\mathcal{O}(N^{-1/2})$}
\centering
\includegraphics[width=120mm]{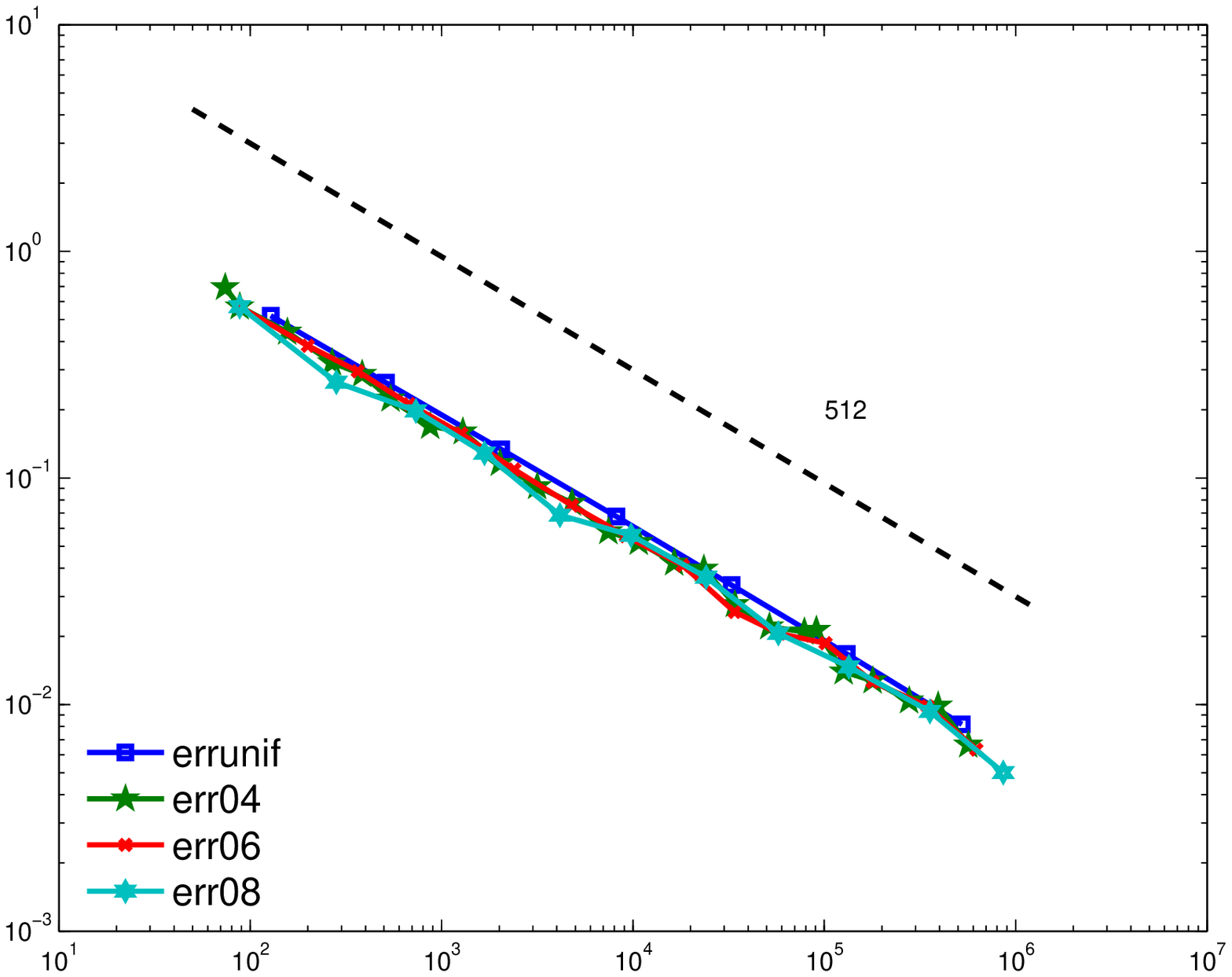}
\caption{Numerical results for $\sqrt{\eps_\ell}$ for uniform and adaptive mesh-refinement with $\theta \in \{0.4, 0.6, 0.8\}$, plotted over the number of elements $N = \#\TT_\ell$.}
\label{fig:conv2}
\end{figure}

\begin{figure}[h!]
\centering
\includegraphics[width=52mm]{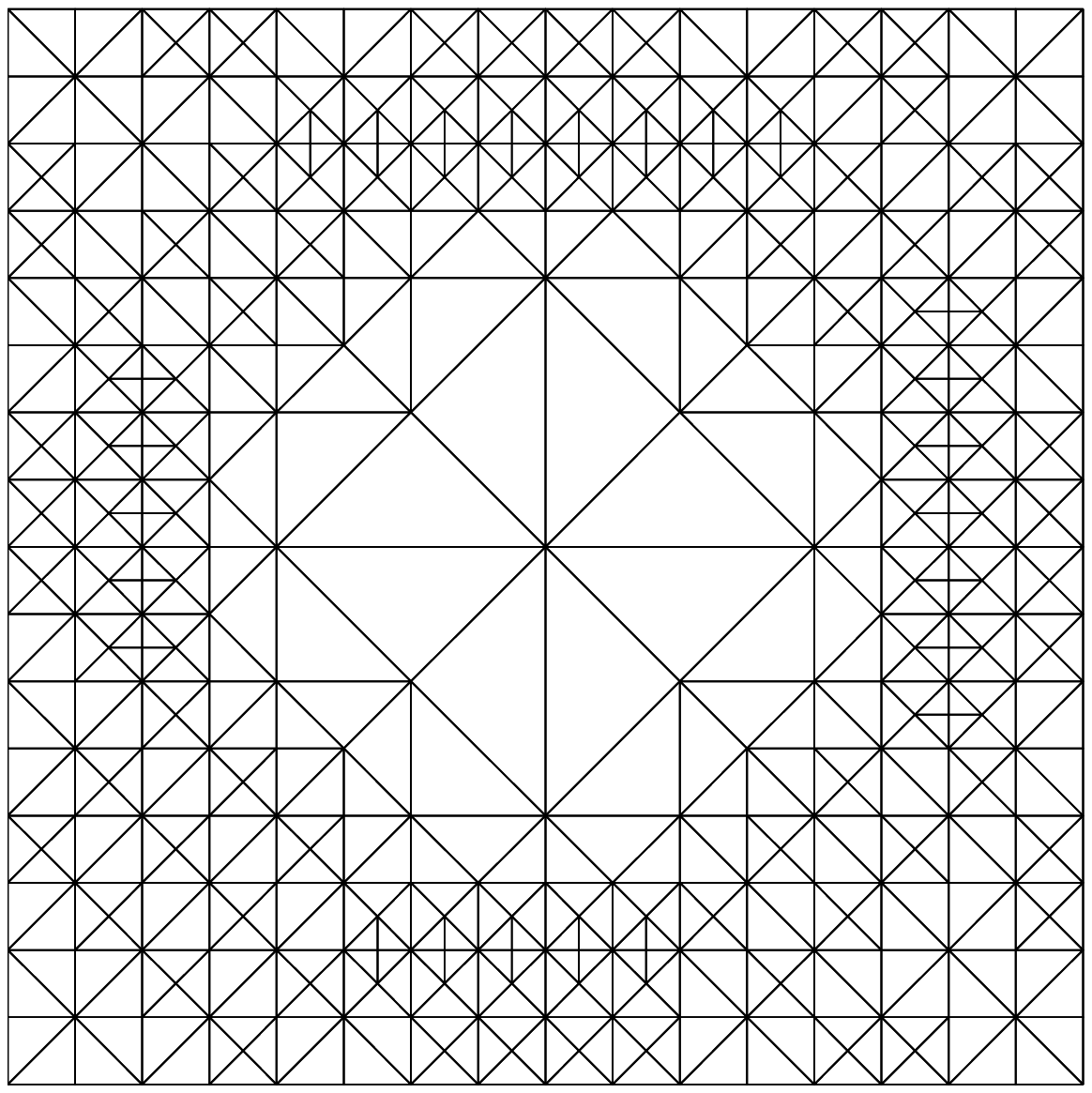}
\hspace{6ex}
\includegraphics[width=52mm]{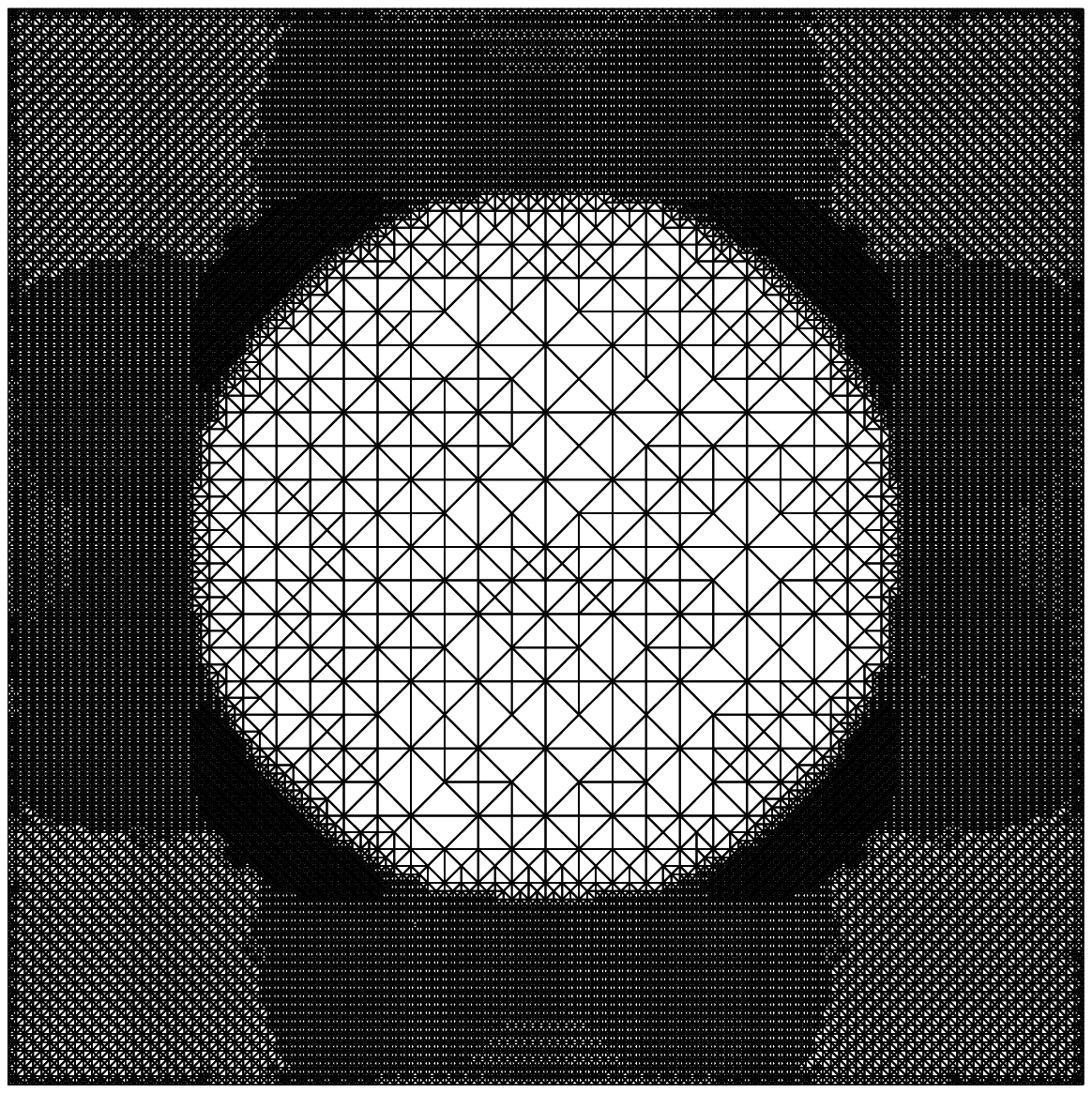}
\caption{Adaptively generated meshes $\TT_5$ (\textit{left}) and $\TT_{11}$ (\textit{right})
with $N=678$ and $N= 34.024 $ elements, respectively, for $\theta=0.6$.}
\label{fig:meshes}
\end{figure}

\section{Numerical Experiments}
\label{section:numerics}

We consider numerical examples, one of which has also been treated in~\cite{cbh2007}. The mesh in each step is adaptively generated by Algorithm~\ref{algorithm}. For the solution at each level, we use the primal-dual active set strategy from~\cite{ik2008}. The numerical results for example 1 are quite similar to those in~\cite{cbh2007}. We stress, however, that our approach includes the adaptive resolution of the Dirichlet data and, contrary to~\cite{cbh2007}, the upcoming examples are thus covered by theory.
\subsection{Example 1}
We consider the obstacle problem with constant obstacle $\chi \equiv 0$ on the square $\Omega:= (-1.5, 1.5)^2$ and a constant force $f \equiv -2$. The Dirichlet boundary data $g \in H^1(\Gamma)$ are given by the trace of the exact solution
\begin{align*}
u := \begin{cases}\frac{r^2}{2} - \ln(r) - \frac{1}{2}, & r\ge1 \\ 0, &\text{else},
\end{cases}
\end{align*}
where $r = |x|$ and $|\cdot|$ denotes the Euclidean norm on $\R^2$. The solution is visualized in Figure~\ref{fig:sol}. We compare uniform and adaptive mesh-refinement, where the adaptivity parameter $\theta$ varies between $0.4$ and $0.8$. The convergence history for uniform and adaptive refinement with $\theta = 0.8$ is plotted in Figure \ref{fig:conv1}, where the error is given in the energy functional, i.e.\
\begin{align}\label{eq:err_energyfunctional}
\varepsilon_\ell = \big|\JJ(U_\ell) - \JJ(u)\big|
\end{align}
and the Dirichlet data oscillations $\apx_\ell$ are defined by~\eqref{eq:apxE}. Note that due to Theorem~\ref{thm:convergence}, the adaptive algorithm drives the error and thus also the energy $\varepsilon_\ell$ to zero, whence it makes sense to plot these physically relevant terms. All quantities are plotted over the number of elements $N = \# \TT_\ell$ of the given triangulation. Due to high regularity of the exact solution, there are no significant benefits of adaptive refinement. We observe, however, that error and error estimator, as well as Dirichlet oscillations show optimal convergence behaviour $\OO(N^{-1/2})$ and $\OO(N^{-3/4})$ respectively. Also, the curves of $\varrho_\ell$ and $\sqrt{\varepsilon_\ell}$ are parallel, which experimentally confirms classical reliability and efficiency of the underlying estimator $\varrho_\ell$ in the sense of $\norm{u - U_\ell}{H^1(\Omega)} \simeq \varrho_\ell$.

Figure~\ref{fig:conv2} compares different values of $\theta$ and we can see that each choice of $\theta \in \{0.4, 0.6, 0.8\}$ leads to optimal convergence, i.e.\ the curves basically coincide.

Finally, Figure~\ref{fig:meshes} shows the adaptively generated meshes after 5 and 11 iterations. As expected, refinement basically takes place in the inactive zone, i.e.\ elements where the discrete solution $U_\ell$ does not touch the obstacle.

\begin{figure}[h]
\centering
\includegraphics[width=120mm]{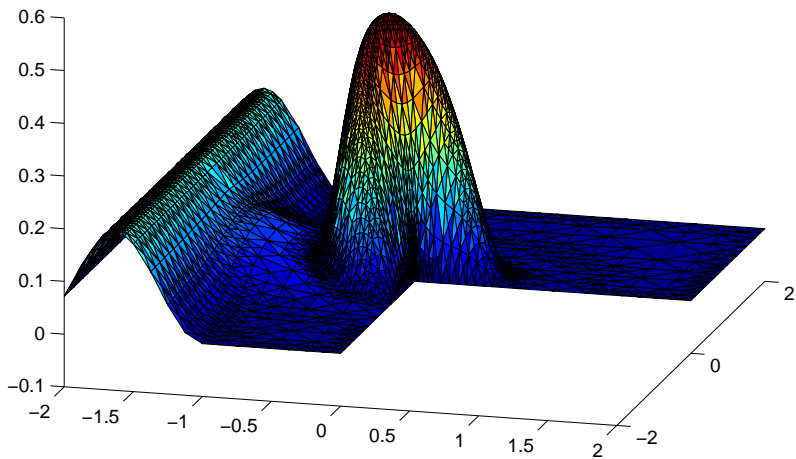}
\caption{Galerkin solution $U_{10}$ on adaptively generated mesh $\TT_{10}$ with
$N= 8.870 $ elements for $\theta=0.6$.}
\label{fig:sol_ex2}
\end{figure}

\begin{figure}[h!]
\psfrag{eta04}{\small{$\varrho_\ell$ (adap.)}}
\psfrag{err04}{\small{$\sqrt{\eps_\ell}$ (adap.)}}
\psfrag{UDell04}{\small{$\apx_\ell$ (adap.)}}
\psfrag{etaUnif}{\small{$\varrho_\ell$ (unif.)}}
\psfrag{errUnif}{\small{$\sqrt{\eps_\ell}$ (unif.)}}
\psfrag{UDellUnif}{\small{$\apx_\ell$ (unif.)}}
\psfrag{512}{\small{$\mathcal{O}(N^{-5/12})$}}
\psfrag{34}{\small{\hspace{-2ex}$\mathcal{O}(N^{-3/4})$}}
\centering
\includegraphics[width=110mm]{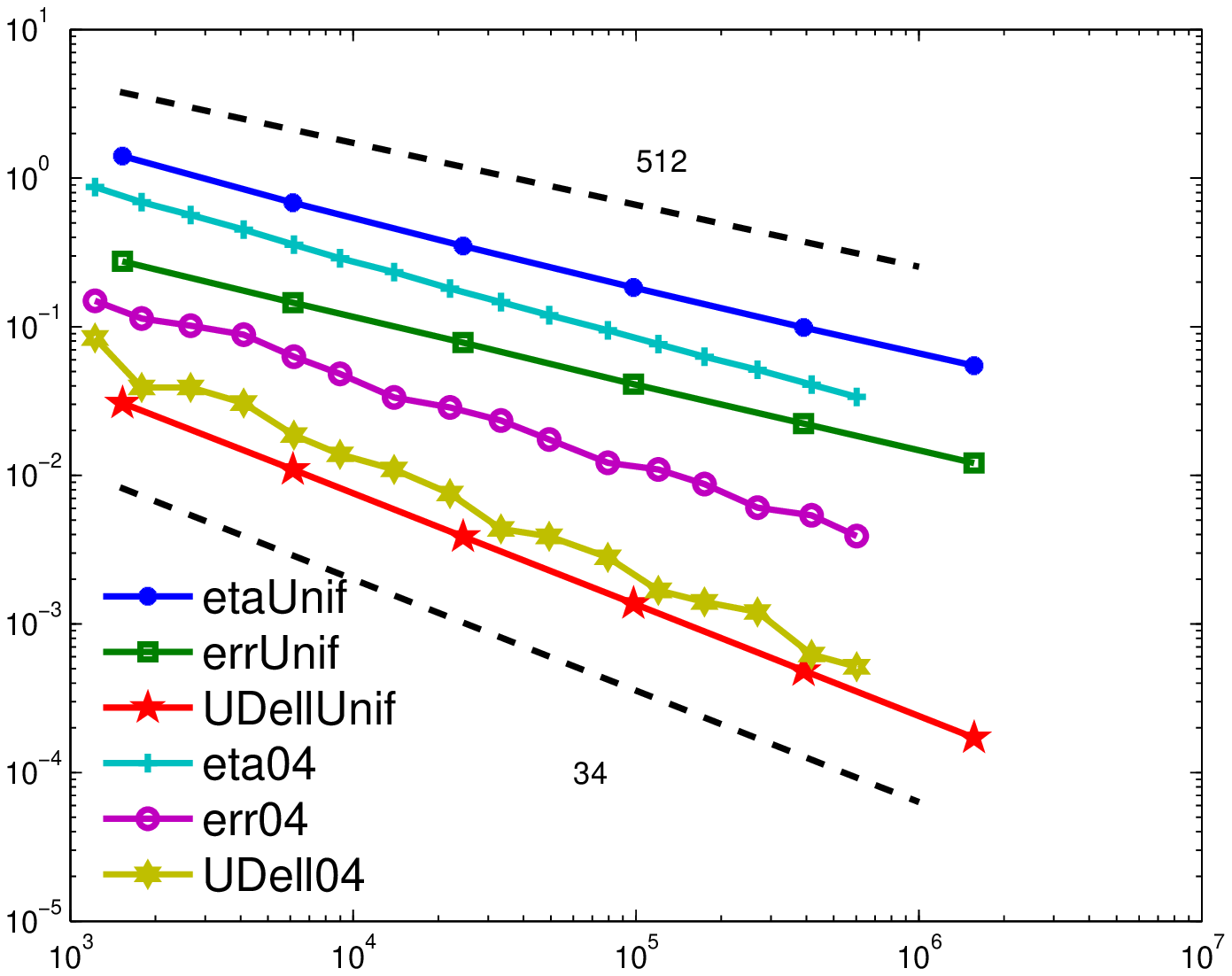}
\caption{Numerical results for uniform and adaptive mesh-refinement with $\theta = 0.4$, where $\sqrt{\eps_\ell}, \varrho_\ell,$ and $\apx_\ell$ are plotted over the number of elements $N = \#\TT_\ell$.}
\label{fig:convTotal}
\end{figure}
\begin{figure}[h]
\psfrag{04}{$\theta = 0.4$}
\psfrag{06}{$\theta = 0.6$}
\psfrag{08}{$\theta = 0.8$}
\psfrag{Unif}{uniform}
\psfrag{errunif}{$\sqrt{\eps_\ell}$ (unif.)}
\psfrag{Apxunif}{$\apx_\ell$ (unif.)}
\psfrag{512}{\hspace{-3ex}$\vspace{-4ex}\mathcal{O}(N^{-5/12})$}
\psfrag{12}{\hspace{-3ex}$\mathcal{O}(N^{-1/2})$}
\centering
\includegraphics[width=120mm]{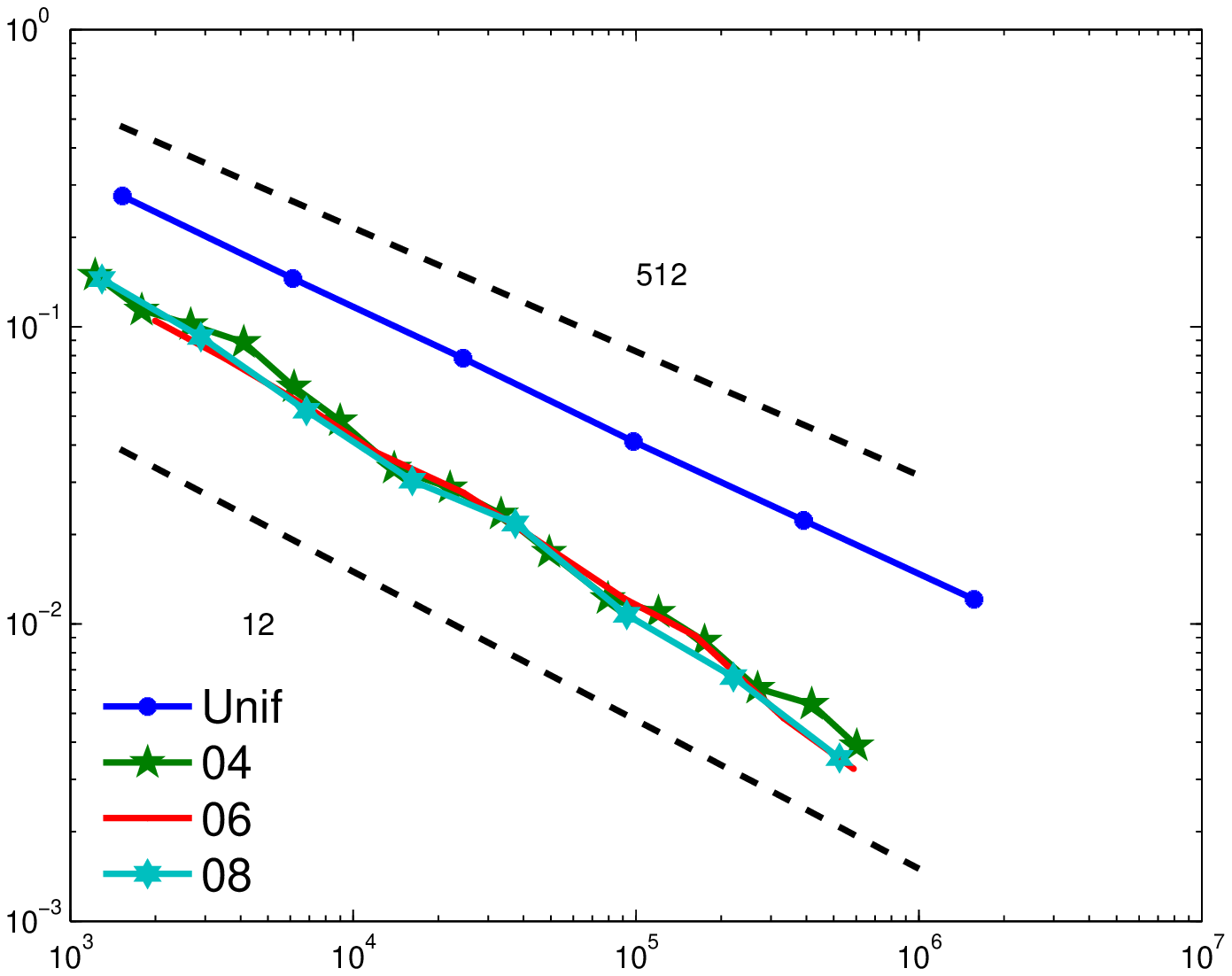}
\caption{Numerical results for $\sqrt{\eps_\ell}$ for uniform and adaptive mesh-refinement with $\theta \in \{0.4, 0.6, 0.8\}$, plotted over the number of elements $N = \#\TT_\ell$.}
\label{fig:convall}
\end{figure}

\begin{figure}[h!]
\centering
\includegraphics[width=70mm]{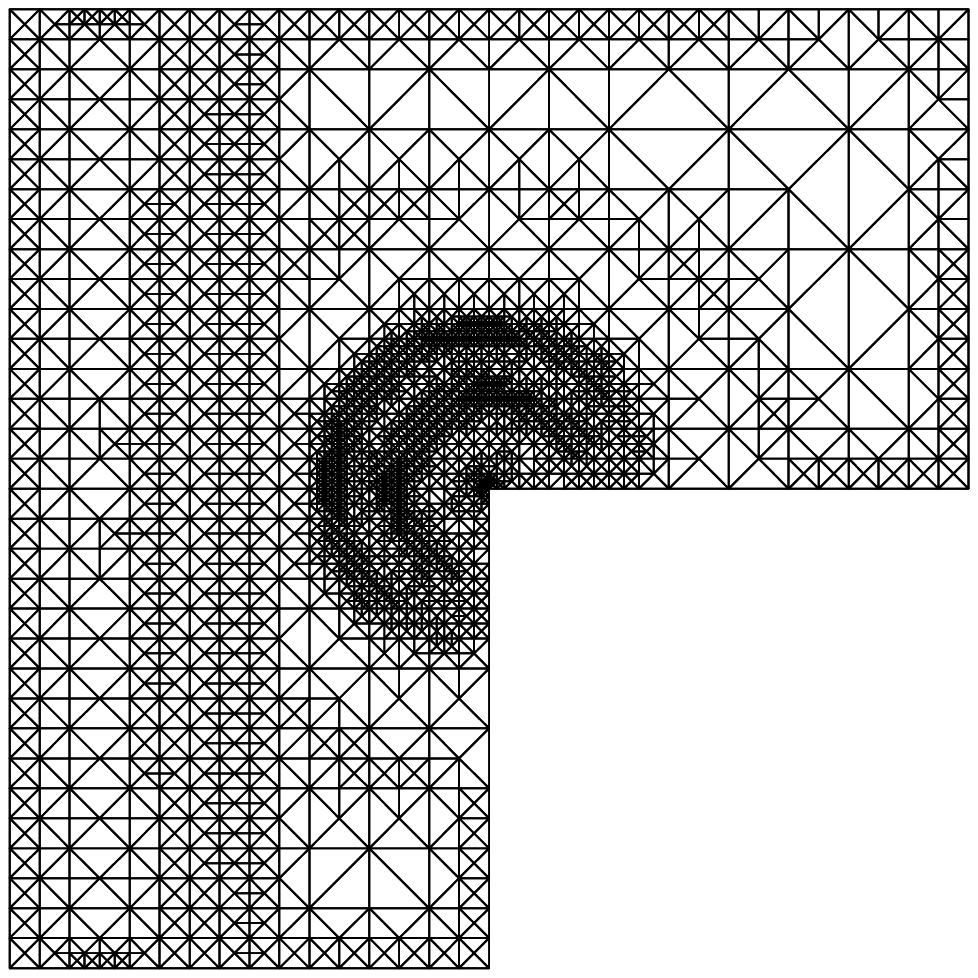}
\includegraphics[width=70mm]{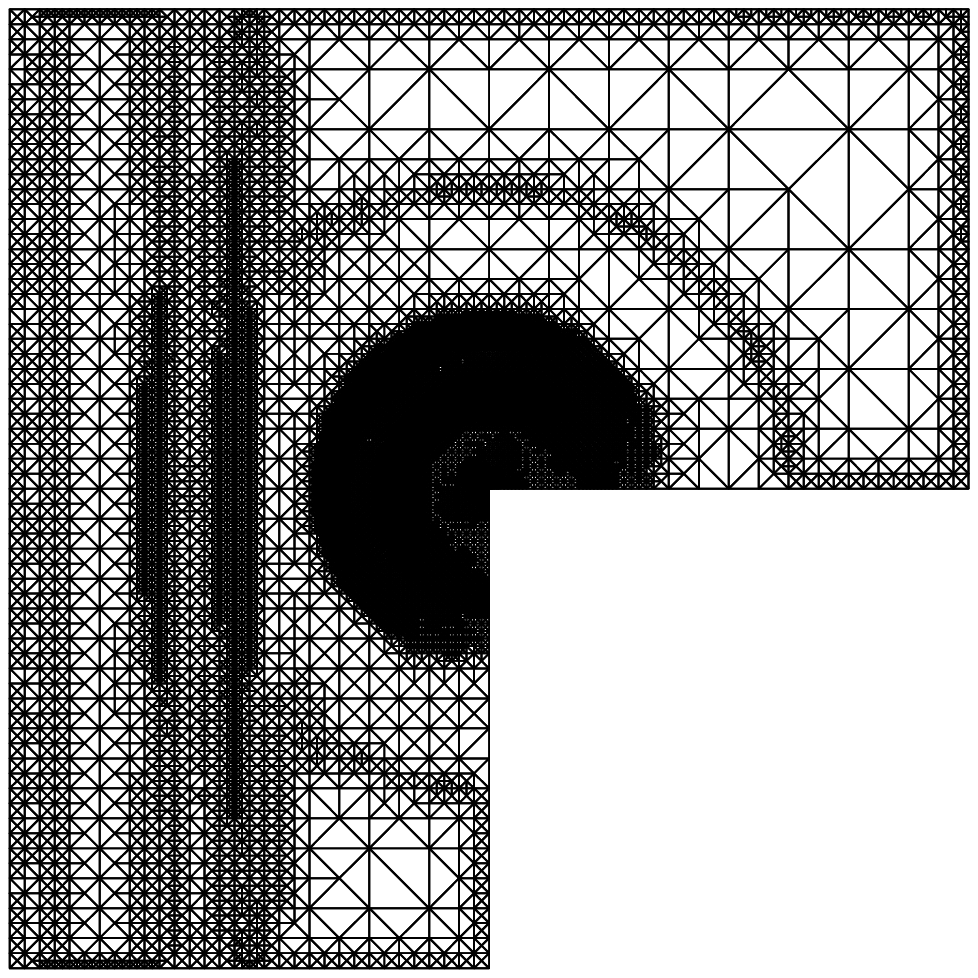}
\caption{Adaptively generated meshes $\TT_{13}$ (\textit{left}) and $\TT_{18}$ (\textit{right})
with $N=4.104$ and $N= 33.310$ elements, respectively, for $\theta=0.6$.}
\label{fig:meshes_ex2}
\end{figure}

\subsection{Example 2}
In this example, we consider the obstacle problem for a non-affine $H^2(\Omega)$ obstacle 
\begin{align*}
 \chi := \begin{cases}\frac{1}{10}\big[\sin\big(5(x+(1-\pi/10))\big)+1\big] & x < -1, \\ 0 & \text{else.}\end{cases}
\end{align*}
on the L-shaped domain $\Omega := (-2,2)^2\backslash[-2,0)$. The right hand side is given in polar coordinates by
\begin{align*}
f(r, \varphi) := 
-r^{2/3}\sin(2\varphi/3)\bigl(d/dr(\gamma_1)(r)/r + d^{\,2}/dr^{\,2}\gamma_1(r)\bigr)\\
\quad-\frac{4}{3}\,r^{-1/3}d/dr(\gamma_1)(r)\sin(2\varphi/3)-\gamma_2(r), 
\end{align*}
where $d/dr$ denotes the radial derivative. Moreover,
$\bar r := 2(r-1/4)$ and
\begin{align*}
\gamma_1(r) &:= \begin{cases}1, & \bar r < 0, \\
 -6\bar r^5 + 15 \bar r^4 -10 \bar r^3 + 1, & 0 \le \bar r < 1, \\
 0, & \bar r \ge 1, \end{cases}\\
\gamma_2(r) &:= \begin{cases}0, & r \le 5/4, \\
1, & \mbox{else.}\end{cases}
\end{align*}
The Dirichlet data $g \in H^{1/2}(\Gamma)$ are given by the trace of the obstacle $\chi$. Since the exact solution for this problem is unknown,
the Galerkin solution on a uniform mesh with approximately $N = \#\TT_\ell = 1.500.000$ elements has been used as reference solution. The non-affine obstacle
was treated by means of Proposition~\ref{prop:zero_obstacle}. Again, we compare uniform and adaptive mesh-refinement for different adaptivity pa\-ra\-me\-ters
$\theta$ between $0.4$ and $0.8$. Figure~\ref{fig:convTotal} shows the convergence history for $\theta = 0.4$ plotted over the number 
of elements $N = \#\TT_\ell$. As before, adaptive refinement leads to the optimal convergence rates $\OO(N^{-1/2})$ and $\OO(N^{-3/4})$ 
for $\sqrt{\eps_\ell}$ and $\apx_\ell$, respectively. Due to the corner singularity of the exact solution at $0$, we observe that 
uniform mesh-refinement leads to a suboptimal convergence behaviour.

Figure~\ref{fig:convall} compares the error for uniform and adaptive mesh refinement, where the adap\-ti\-vi\-ty parameter 
$\theta$ varies between $0.4$ and $0.8$. Again, we observe that each adaptive strategy leads to optimal convergence rates, whereas the 
convergence rate for uniform refinement is suboptimal.

In Figure~\ref{fig:meshes_ex2}, the adaptively generated meshes after 13 and 18 iterations are visualized. As before, refinement is 
basically restricted to the inactive zone.

\newcommand{\bibentry}[2][!]{%
\ifthenelse{\equal{#1}{!}}{\bibitem{#2}}{\bibitem{#2}}%
}

\paragraph{Acknowledgement.}
The authors M.F.\ and D.P.\ are partially funded by the Austria Science Fund (FWF) under grant P21732 \emph{Adaptive Boundary Element Methode}, which is thankfully acknowledged.
M.P.\ acknowledges partial support through the project \emph{Micromagnetic Simulations and Computational Design of Future Devices}, funded by the 
Viennese Science and Techno\-lo\-gy Fund (WWTF) under grant MA09-029.

\end{document}